\numberwithin{equation}{section}
\theoremstyle{plain}
\newtheorem{theorem}{Theorem}[section]
\newtheorem{proposition}[theorem]{Proposition}         
\newtheorem{corollary}[theorem]{Corollary} 
\newtheorem{lemma}[theorem]{Lemma}
\newtheorem{openproblem}[theorem]{Open Problem}
\theoremstyle{definition}
\newtheorem{definition}[theorem]{Definition}
\newtheorem{example}[theorem]{Example}
\newtheorem{remark}[theorem]{Remark}
\newcommand{\C}{\mathbb C}
\newcommand{\M}{\mathbf M}
\DeclareMathOperator{\rank}{rank}
\DeclareMathOperator{\codim}{codim}
\DeclareMathOperator{\Hess}{Hess}
\DeclareMathOperator{\GL}{GL}
\DeclareMathOperator{\Flags}{Fl}
\newcommand{\Sn}{\mathfrak{S}_n}
\newcommand{\HC}{Hessenberg complement conjugated by $w$}
\newcommand{\hc}[1]{H^c_{#1}}
\newcommand{\Jac}{\textbf{J}}
\newcommand{\Jdl}[1]{\Jac_{#1}^{L}}
\newcommand{\Jdc}[1]{\Jac_{#1,#1+1}}
\newcommand{\Jdr}[1]{\Jac_{#1}^{R}}
\newcommand{\SC}[1]{X^{\circ}_{#1}}
\newcommand{\Rmnum}[1]
{\expandafter\@slowromancap\romannumeral #1@}
\begin{document}

\title[On singularity and normality of Hessenberg varieties]{On singularity and normality of \\regular nilpotent Hessenberg varieties}

\author {Hiraku Abe}
\address{Faculty of Science, Department of Applied Mathematics, Okayama University of
Science, 1-1 Ridai-cho, Kita-ku, Okayama, 700-0005, Japan}
\email{hirakuabe@globe.ocn.ne.jp}

\author {Erik Insko}
\address{Department of Mathematics, Florida Gulf Coast University, 10501 FGCU Boulevard South, Fort Myers, FL 33965, United States of America}
\email{einsko@fgcu.edu}

\begin{abstract}
Regular nilpotent Hessenberg varieties form an important family of subvarieties of the flag variety, which are often singular and sometimes not normal varieties.
Like Schubert varieties, they contain distinguished points called permutation flags.
In this paper, we give a combinatorial characterization for a permutation flag of a regular nilpotent Hessenberg variety to be a singular point. 
We also apply this result to characterize regular nilpotent Hessenberg varieties which are normal algebraic varieties.
\end{abstract}

\maketitle

\section{Introduction}\label{sec: intro}
Hessenberg varieties are closed subvarieties of the flag variety originally defined and studied by De~Mari, Procesi, and Shayman ~\cite{DMS88,DMPS92}.   
Each Hessenberg variety $\Hess(X,h)$ is defined by two parameters; a linear operator (or $n\times n$ matrix) $X$ and a Hessenberg function $h\colon [n] \rightarrow [n]$ satisfying $h(i) \leq h(i+1)$ for all $i \in [n-1]$, where we use the notation $[n]\coloneqq\{1,2,3, \ldots, n \}$ throughout this paper.

In this manuscript, we focus on the case when $X$ is a regular nilpotent operator $N$ and $h(i) \geq i+1$ for all $i \in [n-1]$. 
The geometric properties of regular nilpotent Hessenberg varieties have been studied extensively, in for example~
\cite{AFZ20, ADGH18, AHKZ21, Bal17, I15, IT16, ITW20, IY12, Kos96, Pre13, Pre16, ST06, Tym06}. 
In 2018,  Abe, Dedieu, Galetto, Harada determined an explicit list of generators for the local defining ideals of regular nilpotent Hessenberg varieties and used those generators to prove that regular nilpotent Hessenberg varieties are local complete intersections \cite{ADGH18}(Proposition~3.7 and Corollary~3.17).   
In this paper, we analyze those local defining ideals to explicitly determine which permutation flags in regular nilpotent Hessenberg varieties are singular points. 
Since the normality of a local complete intersection can be studied by the codimension of its singular locus, we apply our result to identify Hessenberg-Schubert cells of codimension one that consist entirely of singular points and obtain a characterization of regular nilpotent Hessenberg varieties which are normal algebraic varieties.

While singular loci of Schubert varieties have been studied extensively and fruitfully \cite{BL00}, relatively little is known about singular loci of Hessenberg varieties.  
DeMari, Procesi, and Shayman noted that all regular semisimple Hessenberg varieties are nonsingular \cite{DMPS92}.  In 2019, Insko and Precup showed if $X$ is semisimple (not necessarily regular) and $h$ is the Hessenberg function with $h(i)=i+1$ for $1 \leq i <n$, then the irreducible components of $\Hess(X,h)$ are nonsingular, and the singular locus of $\Hess(X,h)$ consists of the intersections of these irreducible components \cite{IP19}.  

In regards to regular nilpotent Hessenberg varieties, little is known outside of the realm of Peterson varieties; they form an important family of regular nilpotent Hessenberg varieties named after Dale Peterson who was the first to study them and their relation to the quantum cohomology of the flag variety (cf.\ \cite{Kos96,Pet97,Rie03}).
Kostant proved that almost all Peterson varieties are not normal by demonstrating that they have singularities in codimension 1 \cite{Kos96}.   In 2012, Insko and Yong extended Kostant's work to explicitly identify the singular locus of the Peterson varieties \cite{IY12}.   Their proof identifies the singular permutation flags in the Peterson variety and proves that if a permutation flag in a Peterson-Schubert cell is singular then every point in the cell is singular.     Thus the singular locus of a Peterson variety is the union of the Peterson-Schubert cells which contain a singular permutation flag.    

In this work, we generalize parts of the work of Insko and Yong and Kostant, by explicitly describing the singular permutation flags in each regular nilpotent Hessenberg variety and describing explicitly which regular nilpotent Hessenberg varieties are normal algebraic varieties.   We also identify examples of regular nilpotent Hessenberg varieties that contain a Hessenberg-Schubert cell which is singular at its permutation flag and nonsingular at most of the points within the cell.

\subsection{Preview of our main results}\label{subsec: preview}

A Hessenberg function $h\colon [n]\rightarrow [n]$ corresponds to a configuration of shaded boxes on a square grid of size $n \times n$ which consists of boxes in the $i$-th row and the $j$-th column satisfying $i \leq h(j)$ for $i, j\in[n]$. 
We frequently denote a Hessenberg function by listing its values in a sequence, $h=(h(1),h(2),\ldots,h(n))$.
For $h=(3,3,4,5,5)$, the configuration of shaded boxes is given by the following picture.
\begin{figure}[h]
\begin{center}
{\unitlength 0.1in%
\begin{picture}(9.0000,9.0000)(12.0000,-17.0000)%
%
\special{pn 0}%
\special{sh 0.150}%
\special{pa 1200 800}%
\special{pa 1380 800}%
\special{pa 1380 980}%
\special{pa 1200 980}%
\special{pa 1200 800}%
\special{ip}%
\special{pn 8}%
\special{pa 1200 800}%
\special{pa 1380 800}%
\special{pa 1380 980}%
\special{pa 1200 980}%
\special{pa 1200 800}%
\special{pa 1380 800}%
\special{fp}%
%
\special{pn 0}%
\special{sh 0.150}%
\special{pa 1380 800}%
\special{pa 1560 800}%
\special{pa 1560 980}%
\special{pa 1380 980}%
\special{pa 1380 800}%
\special{ip}%
\special{pn 8}%
\special{pa 1380 800}%
\special{pa 1560 800}%
\special{pa 1560 980}%
\special{pa 1380 980}%
\special{pa 1380 800}%
\special{pa 1560 800}%
\special{fp}%
%
\special{pn 0}%
\special{sh 0.150}%
\special{pa 1560 800}%
\special{pa 1740 800}%
\special{pa 1740 980}%
\special{pa 1560 980}%
\special{pa 1560 800}%
\special{ip}%
\special{pn 8}%
\special{pa 1560 800}%
\special{pa 1740 800}%
\special{pa 1740 980}%
\special{pa 1560 980}%
\special{pa 1560 800}%
\special{pa 1740 800}%
\special{fp}%
%
\special{pn 0}%
\special{sh 0.150}%
\special{pa 1740 800}%
\special{pa 1920 800}%
\special{pa 1920 980}%
\special{pa 1740 980}%
\special{pa 1740 800}%
\special{ip}%
\special{pn 8}%
\special{pa 1740 800}%
\special{pa 1920 800}%
\special{pa 1920 980}%
\special{pa 1740 980}%
\special{pa 1740 800}%
\special{pa 1920 800}%
\special{fp}%
%
\special{pn 0}%
\special{sh 0.150}%
\special{pa 1920 800}%
\special{pa 2100 800}%
\special{pa 2100 980}%
\special{pa 1920 980}%
\special{pa 1920 800}%
\special{ip}%
\special{pn 8}%
\special{pa 1920 800}%
\special{pa 2100 800}%
\special{pa 2100 980}%
\special{pa 1920 980}%
\special{pa 1920 800}%
\special{pa 2100 800}%
\special{fp}%
%
\special{pn 0}%
\special{sh 0.150}%
\special{pa 1200 980}%
\special{pa 1380 980}%
\special{pa 1380 1160}%
\special{pa 1200 1160}%
\special{pa 1200 980}%
\special{ip}%
\special{pn 8}%
\special{pa 1200 980}%
\special{pa 1380 980}%
\special{pa 1380 1160}%
\special{pa 1200 1160}%
\special{pa 1200 980}%
\special{pa 1380 980}%
\special{fp}%
%
\special{pn 0}%
\special{sh 0.150}%
\special{pa 1380 980}%
\special{pa 1560 980}%
\special{pa 1560 1160}%
\special{pa 1380 1160}%
\special{pa 1380 980}%
\special{ip}%
\special{pn 8}%
\special{pa 1380 980}%
\special{pa 1560 980}%
\special{pa 1560 1160}%
\special{pa 1380 1160}%
\special{pa 1380 980}%
\special{pa 1560 980}%
\special{fp}%
%
\special{pn 0}%
\special{sh 0.150}%
\special{pa 1560 980}%
\special{pa 1740 980}%
\special{pa 1740 1160}%
\special{pa 1560 1160}%
\special{pa 1560 980}%
\special{ip}%
\special{pn 8}%
\special{pa 1560 980}%
\special{pa 1740 980}%
\special{pa 1740 1160}%
\special{pa 1560 1160}%
\special{pa 1560 980}%
\special{pa 1740 980}%
\special{fp}%
%
\special{pn 0}%
\special{sh 0.150}%
\special{pa 1740 980}%
\special{pa 1920 980}%
\special{pa 1920 1160}%
\special{pa 1740 1160}%
\special{pa 1740 980}%
\special{ip}%
\special{pn 8}%
\special{pa 1740 980}%
\special{pa 1920 980}%
\special{pa 1920 1160}%
\special{pa 1740 1160}%
\special{pa 1740 980}%
\special{pa 1920 980}%
\special{fp}%
%
\special{pn 0}%
\special{sh 0.150}%
\special{pa 1920 980}%
\special{pa 2100 980}%
\special{pa 2100 1160}%
\special{pa 1920 1160}%
\special{pa 1920 980}%
\special{ip}%
\special{pn 8}%
\special{pa 1920 980}%
\special{pa 2100 980}%
\special{pa 2100 1160}%
\special{pa 1920 1160}%
\special{pa 1920 980}%
\special{pa 2100 980}%
\special{fp}%
%
\special{pn 0}%
\special{sh 0.150}%
\special{pa 1200 1160}%
\special{pa 1380 1160}%
\special{pa 1380 1340}%
\special{pa 1200 1340}%
\special{pa 1200 1160}%
\special{ip}%
\special{pn 8}%
\special{pa 1200 1160}%
\special{pa 1380 1160}%
\special{pa 1380 1340}%
\special{pa 1200 1340}%
\special{pa 1200 1160}%
\special{pa 1380 1160}%
\special{fp}%
%
\special{pn 0}%
\special{sh 0.150}%
\special{pa 1380 1160}%
\special{pa 1560 1160}%
\special{pa 1560 1340}%
\special{pa 1380 1340}%
\special{pa 1380 1160}%
\special{ip}%
\special{pn 8}%
\special{pa 1380 1160}%
\special{pa 1560 1160}%
\special{pa 1560 1340}%
\special{pa 1380 1340}%
\special{pa 1380 1160}%
\special{pa 1560 1160}%
\special{fp}%
%
\special{pn 0}%
\special{sh 0.150}%
\special{pa 1560 1160}%
\special{pa 1740 1160}%
\special{pa 1740 1340}%
\special{pa 1560 1340}%
\special{pa 1560 1160}%
\special{ip}%
\special{pn 8}%
\special{pa 1560 1160}%
\special{pa 1740 1160}%
\special{pa 1740 1340}%
\special{pa 1560 1340}%
\special{pa 1560 1160}%
\special{pa 1740 1160}%
\special{fp}%
%
\special{pn 0}%
\special{sh 0.150}%
\special{pa 1740 1160}%
\special{pa 1920 1160}%
\special{pa 1920 1340}%
\special{pa 1740 1340}%
\special{pa 1740 1160}%
\special{ip}%
\special{pn 8}%
\special{pa 1740 1160}%
\special{pa 1920 1160}%
\special{pa 1920 1340}%
\special{pa 1740 1340}%
\special{pa 1740 1160}%
\special{pa 1920 1160}%
\special{fp}%
%
\special{pn 0}%
\special{sh 0.150}%
\special{pa 1920 1160}%
\special{pa 2100 1160}%
\special{pa 2100 1340}%
\special{pa 1920 1340}%
\special{pa 1920 1160}%
\special{ip}%
\special{pn 8}%
\special{pa 1920 1160}%
\special{pa 2100 1160}%
\special{pa 2100 1340}%
\special{pa 1920 1340}%
\special{pa 1920 1160}%
\special{pa 2100 1160}%
\special{fp}%
%
\special{pn 8}%
\special{pa 1200 1340}%
\special{pa 1380 1340}%
\special{pa 1380 1520}%
\special{pa 1200 1520}%
\special{pa 1200 1340}%
\special{pa 1380 1340}%
\special{fp}%
%
\special{pn 8}%
\special{pa 1380 1340}%
\special{pa 1560 1340}%
\special{pa 1560 1520}%
\special{pa 1380 1520}%
\special{pa 1380 1340}%
\special{pa 1560 1340}%
\special{fp}%
%
\special{pn 0}%
\special{sh 0.150}%
\special{pa 1560 1340}%
\special{pa 1740 1340}%
\special{pa 1740 1520}%
\special{pa 1560 1520}%
\special{pa 1560 1340}%
\special{ip}%
\special{pn 8}%
\special{pa 1560 1340}%
\special{pa 1740 1340}%
\special{pa 1740 1520}%
\special{pa 1560 1520}%
\special{pa 1560 1340}%
\special{pa 1740 1340}%
\special{fp}%
%
\special{pn 0}%
\special{sh 0.150}%
\special{pa 1740 1340}%
\special{pa 1920 1340}%
\special{pa 1920 1520}%
\special{pa 1740 1520}%
\special{pa 1740 1340}%
\special{ip}%
\special{pn 8}%
\special{pa 1740 1340}%
\special{pa 1920 1340}%
\special{pa 1920 1520}%
\special{pa 1740 1520}%
\special{pa 1740 1340}%
\special{pa 1920 1340}%
\special{fp}%
%
\special{pn 0}%
\special{sh 0.150}%
\special{pa 1920 1340}%
\special{pa 2100 1340}%
\special{pa 2100 1520}%
\special{pa 1920 1520}%
\special{pa 1920 1340}%
\special{ip}%
\special{pn 8}%
\special{pa 1920 1340}%
\special{pa 2100 1340}%
\special{pa 2100 1520}%
\special{pa 1920 1520}%
\special{pa 1920 1340}%
\special{pa 2100 1340}%
\special{fp}%
%
\special{pn 8}%
\special{pa 1200 1520}%
\special{pa 1380 1520}%
\special{pa 1380 1700}%
\special{pa 1200 1700}%
\special{pa 1200 1520}%
\special{pa 1380 1520}%
\special{fp}%
%
\special{pn 8}%
\special{pa 1380 1520}%
\special{pa 1560 1520}%
\special{pa 1560 1700}%
\special{pa 1380 1700}%
\special{pa 1380 1520}%
\special{pa 1560 1520}%
\special{fp}%
%
\special{pn 8}%
\special{pa 1560 1520}%
\special{pa 1740 1520}%
\special{pa 1740 1700}%
\special{pa 1560 1700}%
\special{pa 1560 1520}%
\special{pa 1740 1520}%
\special{fp}%
%
\special{pn 0}%
\special{sh 0.150}%
\special{pa 1740 1520}%
\special{pa 1920 1520}%
\special{pa 1920 1700}%
\special{pa 1740 1700}%
\special{pa 1740 1520}%
\special{ip}%
\special{pn 8}%
\special{pa 1740 1520}%
\special{pa 1920 1520}%
\special{pa 1920 1700}%
\special{pa 1740 1700}%
\special{pa 1740 1520}%
\special{pa 1920 1520}%
\special{fp}%
%
\special{pn 0}%
\special{sh 0.150}%
\special{pa 1920 1520}%
\special{pa 2100 1520}%
\special{pa 2100 1700}%
\special{pa 1920 1700}%
\special{pa 1920 1520}%
\special{ip}%
\special{pn 8}%
\special{pa 1920 1520}%
\special{pa 2100 1520}%
\special{pa 2100 1700}%
\special{pa 1920 1700}%
\special{pa 1920 1520}%
\special{pa 2100 1520}%
\special{fp}%
\end{picture}}%
\vspace{-15pt}
\end{center}
\end{figure}\\

To state our results, we prefer to consider the collection of the \textit{unshaded} boxes of the configuration as follows. 
For a permutation $w\in\Sn$, we set
\begin{align}\label{eq: def of Hw}
 \hc{w}
 \coloneqq \{(w(i),w(j))\in[n]\times[n]\mid  i< h(j)\} .
\end{align}
We call $\hc{w}$ the \textbf{\HC}.
For instance, if $w=32154$ in one-line notation, then $\hc{w}$ is depicted as the collection of the unshaded boxes in the following picture.
\begin{figure}[h]
\begin{center}
{\unitlength 0.1in%
\begin{picture}(9.0000,9.0000)(12.0000,-17.0000)%
%
\special{pn 0}%
\special{sh 0.150}%
\special{pa 1200 800}%
\special{pa 1380 800}%
\special{pa 1380 980}%
\special{pa 1200 980}%
\special{pa 1200 800}%
\special{ip}%
\special{pn 8}%
\special{pa 1200 800}%
\special{pa 1380 800}%
\special{pa 1380 980}%
\special{pa 1200 980}%
\special{pa 1200 800}%
\special{pa 1380 800}%
\special{fp}%
%
\special{pn 0}%
\special{sh 0.150}%
\special{pa 1380 800}%
\special{pa 1560 800}%
\special{pa 1560 980}%
\special{pa 1380 980}%
\special{pa 1380 800}%
\special{ip}%
\special{pn 8}%
\special{pa 1380 800}%
\special{pa 1560 800}%
\special{pa 1560 980}%
\special{pa 1380 980}%
\special{pa 1380 800}%
\special{pa 1560 800}%
\special{fp}%
%
\special{pn 0}%
\special{sh 0.150}%
\special{pa 1560 800}%
\special{pa 1740 800}%
\special{pa 1740 980}%
\special{pa 1560 980}%
\special{pa 1560 800}%
\special{ip}%
\special{pn 8}%
\special{pa 1560 800}%
\special{pa 1740 800}%
\special{pa 1740 980}%
\special{pa 1560 980}%
\special{pa 1560 800}%
\special{pa 1740 800}%
\special{fp}%
%
\special{pn 0}%
\special{sh 0.150}%
\special{pa 1740 800}%
\special{pa 1920 800}%
\special{pa 1920 980}%
\special{pa 1740 980}%
\special{pa 1740 800}%
\special{ip}%
\special{pn 8}%
\special{pa 1740 800}%
\special{pa 1920 800}%
\special{pa 1920 980}%
\special{pa 1740 980}%
\special{pa 1740 800}%
\special{pa 1920 800}%
\special{fp}%
%
\special{pn 0}%
\special{sh 0.150}%
\special{pa 1920 800}%
\special{pa 2100 800}%
\special{pa 2100 980}%
\special{pa 1920 980}%
\special{pa 1920 800}%
\special{ip}%
\special{pn 8}%
\special{pa 1920 800}%
\special{pa 2100 800}%
\special{pa 2100 980}%
\special{pa 1920 980}%
\special{pa 1920 800}%
\special{pa 2100 800}%
\special{fp}%
%
\special{pn 0}%
\special{sh 0.150}%
\special{pa 1200 980}%
\special{pa 1380 980}%
\special{pa 1380 1160}%
\special{pa 1200 1160}%
\special{pa 1200 980}%
\special{ip}%
\special{pn 8}%
\special{pa 1200 980}%
\special{pa 1380 980}%
\special{pa 1380 1160}%
\special{pa 1200 1160}%
\special{pa 1200 980}%
\special{pa 1380 980}%
\special{fp}%
%
\special{pn 0}%
\special{sh 0.150}%
\special{pa 1380 980}%
\special{pa 1560 980}%
\special{pa 1560 1160}%
\special{pa 1380 1160}%
\special{pa 1380 980}%
\special{ip}%
\special{pn 8}%
\special{pa 1380 980}%
\special{pa 1560 980}%
\special{pa 1560 1160}%
\special{pa 1380 1160}%
\special{pa 1380 980}%
\special{pa 1560 980}%
\special{fp}%
%
\special{pn 0}%
\special{sh 0.150}%
\special{pa 1560 980}%
\special{pa 1740 980}%
\special{pa 1740 1160}%
\special{pa 1560 1160}%
\special{pa 1560 980}%
\special{ip}%
\special{pn 8}%
\special{pa 1560 980}%
\special{pa 1740 980}%
\special{pa 1740 1160}%
\special{pa 1560 1160}%
\special{pa 1560 980}%
\special{pa 1740 980}%
\special{fp}%
%
\special{pn 0}%
\special{sh 0.150}%
\special{pa 1740 980}%
\special{pa 1920 980}%
\special{pa 1920 1160}%
\special{pa 1740 1160}%
\special{pa 1740 980}%
\special{ip}%
\special{pn 8}%
\special{pa 1740 980}%
\special{pa 1920 980}%
\special{pa 1920 1160}%
\special{pa 1740 1160}%
\special{pa 1740 980}%
\special{pa 1920 980}%
\special{fp}%
%
\special{pn 0}%
\special{sh 0.150}%
\special{pa 1920 980}%
\special{pa 2100 980}%
\special{pa 2100 1160}%
\special{pa 1920 1160}%
\special{pa 1920 980}%
\special{ip}%
\special{pn 8}%
\special{pa 1920 980}%
\special{pa 2100 980}%
\special{pa 2100 1160}%
\special{pa 1920 1160}%
\special{pa 1920 980}%
\special{pa 2100 980}%
\special{fp}%
%
\special{pn 0}%
\special{sh 0.150}%
\special{pa 1200 1160}%
\special{pa 1380 1160}%
\special{pa 1380 1340}%
\special{pa 1200 1340}%
\special{pa 1200 1160}%
\special{ip}%
\special{pn 8}%
\special{pa 1200 1160}%
\special{pa 1380 1160}%
\special{pa 1380 1340}%
\special{pa 1200 1340}%
\special{pa 1200 1160}%
\special{pa 1380 1160}%
\special{fp}%
%
\special{pn 0}%
\special{sh 0.150}%
\special{pa 1380 1160}%
\special{pa 1560 1160}%
\special{pa 1560 1340}%
\special{pa 1380 1340}%
\special{pa 1380 1160}%
\special{ip}%
\special{pn 8}%
\special{pa 1380 1160}%
\special{pa 1560 1160}%
\special{pa 1560 1340}%
\special{pa 1380 1340}%
\special{pa 1380 1160}%
\special{pa 1560 1160}%
\special{fp}%
%
\special{pn 0}%
\special{sh 0.150}%
\special{pa 1560 1160}%
\special{pa 1740 1160}%
\special{pa 1740 1340}%
\special{pa 1560 1340}%
\special{pa 1560 1160}%
\special{ip}%
\special{pn 8}%
\special{pa 1560 1160}%
\special{pa 1740 1160}%
\special{pa 1740 1340}%
\special{pa 1560 1340}%
\special{pa 1560 1160}%
\special{pa 1740 1160}%
\special{fp}%
%
\special{pn 0}%
\special{sh 0.150}%
\special{pa 1740 1160}%
\special{pa 1920 1160}%
\special{pa 1920 1340}%
\special{pa 1740 1340}%
\special{pa 1740 1160}%
\special{ip}%
\special{pn 8}%
\special{pa 1740 1160}%
\special{pa 1920 1160}%
\special{pa 1920 1340}%
\special{pa 1740 1340}%
\special{pa 1740 1160}%
\special{pa 1920 1160}%
\special{fp}%
%
\special{pn 0}%
\special{sh 0.150}%
\special{pa 1920 1160}%
\special{pa 2100 1160}%
\special{pa 2100 1340}%
\special{pa 1920 1340}%
\special{pa 1920 1160}%
\special{ip}%
\special{pn 8}%
\special{pa 1920 1160}%
\special{pa 2100 1160}%
\special{pa 2100 1340}%
\special{pa 1920 1340}%
\special{pa 1920 1160}%
\special{pa 2100 1160}%
\special{fp}%
%
\special{pn 8}%
\special{pa 1200 1340}%
\special{pa 1380 1340}%
\special{pa 1380 1520}%
\special{pa 1200 1520}%
\special{pa 1200 1340}%
\special{pa 1380 1340}%
\special{fp}%
%
\special{pn 8}%
\special{pa 1380 1340}%
\special{pa 1560 1340}%
\special{pa 1560 1520}%
\special{pa 1380 1520}%
\special{pa 1380 1340}%
\special{pa 1560 1340}%
\special{fp}%
%
\special{pn 8}%
\special{pa 1560 1340}%
\special{pa 1740 1340}%
\special{pa 1740 1520}%
\special{pa 1560 1520}%
\special{pa 1560 1340}%
\special{pa 1740 1340}%
\special{fp}%
%
\special{pn 0}%
\special{sh 0.150}%
\special{pa 1740 1340}%
\special{pa 1920 1340}%
\special{pa 1920 1520}%
\special{pa 1740 1520}%
\special{pa 1740 1340}%
\special{ip}%
\special{pn 8}%
\special{pa 1740 1340}%
\special{pa 1920 1340}%
\special{pa 1920 1520}%
\special{pa 1740 1520}%
\special{pa 1740 1340}%
\special{pa 1920 1340}%
\special{fp}%
%
\special{pn 0}%
\special{sh 0.150}%
\special{pa 1920 1340}%
\special{pa 2100 1340}%
\special{pa 2100 1520}%
\special{pa 1920 1520}%
\special{pa 1920 1340}%
\special{ip}%
\special{pn 8}%
\special{pa 1920 1340}%
\special{pa 2100 1340}%
\special{pa 2100 1520}%
\special{pa 1920 1520}%
\special{pa 1920 1340}%
\special{pa 2100 1340}%
\special{fp}%
%
\special{pn 0}%
\special{sh 0.150}%
\special{pa 1200 1520}%
\special{pa 1380 1520}%
\special{pa 1380 1700}%
\special{pa 1200 1700}%
\special{pa 1200 1520}%
\special{ip}%
\special{pn 8}%
\special{pa 1200 1520}%
\special{pa 1380 1520}%
\special{pa 1380 1700}%
\special{pa 1200 1700}%
\special{pa 1200 1520}%
\special{pa 1380 1520}%
\special{fp}%
%
\special{pn 8}%
\special{pa 1380 1520}%
\special{pa 1560 1520}%
\special{pa 1560 1700}%
\special{pa 1380 1700}%
\special{pa 1380 1520}%
\special{pa 1560 1520}%
\special{fp}%
%
\special{pn 8}%
\special{pa 1560 1520}%
\special{pa 1740 1520}%
\special{pa 1740 1700}%
\special{pa 1560 1700}%
\special{pa 1560 1520}%
\special{pa 1740 1520}%
\special{fp}%
%
\special{pn 0}%
\special{sh 0.150}%
\special{pa 1740 1520}%
\special{pa 1920 1520}%
\special{pa 1920 1700}%
\special{pa 1740 1700}%
\special{pa 1740 1520}%
\special{ip}%
\special{pn 8}%
\special{pa 1740 1520}%
\special{pa 1920 1520}%
\special{pa 1920 1700}%
\special{pa 1740 1700}%
\special{pa 1740 1520}%
\special{pa 1920 1520}%
\special{fp}%
%
\special{pn 0}%
\special{sh 0.150}%
\special{pa 1920 1520}%
\special{pa 2100 1520}%
\special{pa 2100 1700}%
\special{pa 1920 1700}%
\special{pa 1920 1520}%
\special{ip}%
\special{pn 8}%
\special{pa 1920 1520}%
\special{pa 2100 1520}%
\special{pa 2100 1700}%
\special{pa 1920 1700}%
\special{pa 1920 1520}%
\special{pa 2100 1520}%
\special{fp}%
\end{picture}}%
\end{center}\vspace{-15pt}
\end{figure}\\

A \textbf{lower diagonal full-string} \textbf{(of height $d-1$)} of $[n]\times[n]$ is a set of the form
\[
\{(d,1), (d+1,2),\ldots, (n,n-d+1)\in[n]\times[n]\}
\]
for some $2\le d\leq n$. 
The set of theblack dots in the following picture are an example of the lower diagonal full-string of height 2 for $n=5$.\\
\[
{\unitlength 0.1in%
\begin{picture}(9.0000,9.0000)(12.0000,-17.0000)%
%
\special{pn 8}%
\special{pa 1200 800}%
\special{pa 1380 800}%
\special{pa 1380 980}%
\special{pa 1200 980}%
\special{pa 1200 800}%
\special{pa 1380 800}%
\special{fp}%
%
\special{pn 8}%
\special{pa 1380 800}%
\special{pa 1560 800}%
\special{pa 1560 980}%
\special{pa 1380 980}%
\special{pa 1380 800}%
\special{pa 1560 800}%
\special{fp}%
%
\special{pn 8}%
\special{pa 1560 800}%
\special{pa 1740 800}%
\special{pa 1740 980}%
\special{pa 1560 980}%
\special{pa 1560 800}%
\special{pa 1740 800}%
\special{fp}%
%
\special{pn 8}%
\special{pa 1740 800}%
\special{pa 1920 800}%
\special{pa 1920 980}%
\special{pa 1740 980}%
\special{pa 1740 800}%
\special{pa 1920 800}%
\special{fp}%
%
\special{pn 8}%
\special{pa 1920 800}%
\special{pa 2100 800}%
\special{pa 2100 980}%
\special{pa 1920 980}%
\special{pa 1920 800}%
\special{pa 2100 800}%
\special{fp}%
%
\special{pn 8}%
\special{pa 1200 980}%
\special{pa 1380 980}%
\special{pa 1380 1160}%
\special{pa 1200 1160}%
\special{pa 1200 980}%
\special{pa 1380 980}%
\special{fp}%
%
\special{pn 8}%
\special{pa 1380 980}%
\special{pa 1560 980}%
\special{pa 1560 1160}%
\special{pa 1380 1160}%
\special{pa 1380 980}%
\special{pa 1560 980}%
\special{fp}%
%
\special{pn 8}%
\special{pa 1560 980}%
\special{pa 1740 980}%
\special{pa 1740 1160}%
\special{pa 1560 1160}%
\special{pa 1560 980}%
\special{pa 1740 980}%
\special{fp}%
%
\special{pn 8}%
\special{pa 1740 980}%
\special{pa 1920 980}%
\special{pa 1920 1160}%
\special{pa 1740 1160}%
\special{pa 1740 980}%
\special{pa 1920 980}%
\special{fp}%
%
\special{pn 8}%
\special{pa 1920 980}%
\special{pa 2100 980}%
\special{pa 2100 1160}%
\special{pa 1920 1160}%
\special{pa 1920 980}%
\special{pa 2100 980}%
\special{fp}%
%
\special{pn 8}%
\special{pa 1200 1160}%
\special{pa 1380 1160}%
\special{pa 1380 1340}%
\special{pa 1200 1340}%
\special{pa 1200 1160}%
\special{pa 1380 1160}%
\special{fp}%
%
\special{pn 8}%
\special{pa 1380 1160}%
\special{pa 1560 1160}%
\special{pa 1560 1340}%
\special{pa 1380 1340}%
\special{pa 1380 1160}%
\special{pa 1560 1160}%
\special{fp}%
%
\special{pn 8}%
\special{pa 1560 1160}%
\special{pa 1740 1160}%
\special{pa 1740 1340}%
\special{pa 1560 1340}%
\special{pa 1560 1160}%
\special{pa 1740 1160}%
\special{fp}%
%
\special{pn 8}%
\special{pa 1740 1160}%
\special{pa 1920 1160}%
\special{pa 1920 1340}%
\special{pa 1740 1340}%
\special{pa 1740 1160}%
\special{pa 1920 1160}%
\special{fp}%
%
\special{pn 8}%
\special{pa 1920 1160}%
\special{pa 2100 1160}%
\special{pa 2100 1340}%
\special{pa 1920 1340}%
\special{pa 1920 1160}%
\special{pa 2100 1160}%
\special{fp}%
%
\special{pn 8}%
\special{pa 1200 1340}%
\special{pa 1380 1340}%
\special{pa 1380 1520}%
\special{pa 1200 1520}%
\special{pa 1200 1340}%
\special{pa 1380 1340}%
\special{fp}%
%
\special{pn 8}%
\special{pa 1380 1340}%
\special{pa 1560 1340}%
\special{pa 1560 1520}%
\special{pa 1380 1520}%
\special{pa 1380 1340}%
\special{pa 1560 1340}%
\special{fp}%
%
\special{pn 8}%
\special{pa 1560 1340}%
\special{pa 1740 1340}%
\special{pa 1740 1520}%
\special{pa 1560 1520}%
\special{pa 1560 1340}%
\special{pa 1740 1340}%
\special{fp}%
%
\special{pn 8}%
\special{pa 1740 1340}%
\special{pa 1920 1340}%
\special{pa 1920 1520}%
\special{pa 1740 1520}%
\special{pa 1740 1340}%
\special{pa 1920 1340}%
\special{fp}%
%
\special{pn 8}%
\special{pa 1920 1340}%
\special{pa 2100 1340}%
\special{pa 2100 1520}%
\special{pa 1920 1520}%
\special{pa 1920 1340}%
\special{pa 2100 1340}%
\special{fp}%
%
\special{pn 8}%
\special{pa 1200 1520}%
\special{pa 1380 1520}%
\special{pa 1380 1700}%
\special{pa 1200 1700}%
\special{pa 1200 1520}%
\special{pa 1380 1520}%
\special{fp}%
%
\special{pn 8}%
\special{pa 1380 1520}%
\special{pa 1560 1520}%
\special{pa 1560 1700}%
\special{pa 1380 1700}%
\special{pa 1380 1520}%
\special{pa 1560 1520}%
\special{fp}%
%
\special{pn 8}%
\special{pa 1560 1520}%
\special{pa 1740 1520}%
\special{pa 1740 1700}%
\special{pa 1560 1700}%
\special{pa 1560 1520}%
\special{pa 1740 1520}%
\special{fp}%
%
\special{pn 8}%
\special{pa 1740 1520}%
\special{pa 1920 1520}%
\special{pa 1920 1700}%
\special{pa 1740 1700}%
\special{pa 1740 1520}%
\special{pa 1920 1520}%
\special{fp}%
%
\special{pn 8}%
\special{pa 1920 1520}%
\special{pa 2100 1520}%
\special{pa 2100 1700}%
\special{pa 1920 1700}%
\special{pa 1920 1520}%
\special{pa 2100 1520}%
\special{fp}%
%
\special{sh 1.000}%
\special{ia 1650 1610 27 27 0.0000000 6.2831853}%
\special{pn 8}%
\special{ar 1650 1610 27 27 0.0000000 6.2831853}%
%
\special{sh 1.000}%
\special{ia 1290 1250 27 27 0.0000000 6.2831853}%
\special{pn 8}%
\special{ar 1290 1250 27 27 0.0000000 6.2831853}%
%
\special{sh 1.000}%
\special{ia 1470 1430 27 27 0.0000000 6.2831853}%
\special{pn 8}%
\special{ar 1470 1430 27 27 0.0000000 6.2831853}%
\end{picture}}%
\vspace{5pt}
\]
If we take $w=32154$ as above, then $\hc{w}$ contains a lower diagonal full-string of height~$3$ as the following picture shows.\vspace{10pt}
\[
{\unitlength 0.1in%
\begin{picture}(9.0000,9.0000)(12.0000,-17.0000)%
%
\special{pn 0}%
\special{sh 0.150}%
\special{pa 1200 800}%
\special{pa 1380 800}%
\special{pa 1380 980}%
\special{pa 1200 980}%
\special{pa 1200 800}%
\special{ip}%
\special{pn 8}%
\special{pa 1200 800}%
\special{pa 1380 800}%
\special{pa 1380 980}%
\special{pa 1200 980}%
\special{pa 1200 800}%
\special{pa 1380 800}%
\special{fp}%
%
\special{pn 0}%
\special{sh 0.150}%
\special{pa 1380 800}%
\special{pa 1560 800}%
\special{pa 1560 980}%
\special{pa 1380 980}%
\special{pa 1380 800}%
\special{ip}%
\special{pn 8}%
\special{pa 1380 800}%
\special{pa 1560 800}%
\special{pa 1560 980}%
\special{pa 1380 980}%
\special{pa 1380 800}%
\special{pa 1560 800}%
\special{fp}%
%
\special{pn 0}%
\special{sh 0.150}%
\special{pa 1560 800}%
\special{pa 1740 800}%
\special{pa 1740 980}%
\special{pa 1560 980}%
\special{pa 1560 800}%
\special{ip}%
\special{pn 8}%
\special{pa 1560 800}%
\special{pa 1740 800}%
\special{pa 1740 980}%
\special{pa 1560 980}%
\special{pa 1560 800}%
\special{pa 1740 800}%
\special{fp}%
%
\special{pn 0}%
\special{sh 0.150}%
\special{pa 1740 800}%
\special{pa 1920 800}%
\special{pa 1920 980}%
\special{pa 1740 980}%
\special{pa 1740 800}%
\special{ip}%
\special{pn 8}%
\special{pa 1740 800}%
\special{pa 1920 800}%
\special{pa 1920 980}%
\special{pa 1740 980}%
\special{pa 1740 800}%
\special{pa 1920 800}%
\special{fp}%
%
\special{pn 0}%
\special{sh 0.150}%
\special{pa 1920 800}%
\special{pa 2100 800}%
\special{pa 2100 980}%
\special{pa 1920 980}%
\special{pa 1920 800}%
\special{ip}%
\special{pn 8}%
\special{pa 1920 800}%
\special{pa 2100 800}%
\special{pa 2100 980}%
\special{pa 1920 980}%
\special{pa 1920 800}%
\special{pa 2100 800}%
\special{fp}%
%
\special{pn 0}%
\special{sh 0.150}%
\special{pa 1200 980}%
\special{pa 1380 980}%
\special{pa 1380 1160}%
\special{pa 1200 1160}%
\special{pa 1200 980}%
\special{ip}%
\special{pn 8}%
\special{pa 1200 980}%
\special{pa 1380 980}%
\special{pa 1380 1160}%
\special{pa 1200 1160}%
\special{pa 1200 980}%
\special{pa 1380 980}%
\special{fp}%
%
\special{pn 0}%
\special{sh 0.150}%
\special{pa 1380 980}%
\special{pa 1560 980}%
\special{pa 1560 1160}%
\special{pa 1380 1160}%
\special{pa 1380 980}%
\special{ip}%
\special{pn 8}%
\special{pa 1380 980}%
\special{pa 1560 980}%
\special{pa 1560 1160}%
\special{pa 1380 1160}%
\special{pa 1380 980}%
\special{pa 1560 980}%
\special{fp}%
%
\special{pn 0}%
\special{sh 0.150}%
\special{pa 1560 980}%
\special{pa 1740 980}%
\special{pa 1740 1160}%
\special{pa 1560 1160}%
\special{pa 1560 980}%
\special{ip}%
\special{pn 8}%
\special{pa 1560 980}%
\special{pa 1740 980}%
\special{pa 1740 1160}%
\special{pa 1560 1160}%
\special{pa 1560 980}%
\special{pa 1740 980}%
\special{fp}%
%
\special{pn 0}%
\special{sh 0.150}%
\special{pa 1740 980}%
\special{pa 1920 980}%
\special{pa 1920 1160}%
\special{pa 1740 1160}%
\special{pa 1740 980}%
\special{ip}%
\special{pn 8}%
\special{pa 1740 980}%
\special{pa 1920 980}%
\special{pa 1920 1160}%
\special{pa 1740 1160}%
\special{pa 1740 980}%
\special{pa 1920 980}%
\special{fp}%
%
\special{pn 0}%
\special{sh 0.150}%
\special{pa 1920 980}%
\special{pa 2100 980}%
\special{pa 2100 1160}%
\special{pa 1920 1160}%
\special{pa 1920 980}%
\special{ip}%
\special{pn 8}%
\special{pa 1920 980}%
\special{pa 2100 980}%
\special{pa 2100 1160}%
\special{pa 1920 1160}%
\special{pa 1920 980}%
\special{pa 2100 980}%
\special{fp}%
%
\special{pn 0}%
\special{sh 0.150}%
\special{pa 1200 1160}%
\special{pa 1380 1160}%
\special{pa 1380 1340}%
\special{pa 1200 1340}%
\special{pa 1200 1160}%
\special{ip}%
\special{pn 8}%
\special{pa 1200 1160}%
\special{pa 1380 1160}%
\special{pa 1380 1340}%
\special{pa 1200 1340}%
\special{pa 1200 1160}%
\special{pa 1380 1160}%
\special{fp}%
%
\special{pn 0}%
\special{sh 0.150}%
\special{pa 1380 1160}%
\special{pa 1560 1160}%
\special{pa 1560 1340}%
\special{pa 1380 1340}%
\special{pa 1380 1160}%
\special{ip}%
\special{pn 8}%
\special{pa 1380 1160}%
\special{pa 1560 1160}%
\special{pa 1560 1340}%
\special{pa 1380 1340}%
\special{pa 1380 1160}%
\special{pa 1560 1160}%
\special{fp}%
%
\special{pn 0}%
\special{sh 0.150}%
\special{pa 1560 1160}%
\special{pa 1740 1160}%
\special{pa 1740 1340}%
\special{pa 1560 1340}%
\special{pa 1560 1160}%
\special{ip}%
\special{pn 8}%
\special{pa 1560 1160}%
\special{pa 1740 1160}%
\special{pa 1740 1340}%
\special{pa 1560 1340}%
\special{pa 1560 1160}%
\special{pa 1740 1160}%
\special{fp}%
%
\special{pn 0}%
\special{sh 0.150}%
\special{pa 1740 1160}%
\special{pa 1920 1160}%
\special{pa 1920 1340}%
\special{pa 1740 1340}%
\special{pa 1740 1160}%
\special{ip}%
\special{pn 8}%
\special{pa 1740 1160}%
\special{pa 1920 1160}%
\special{pa 1920 1340}%
\special{pa 1740 1340}%
\special{pa 1740 1160}%
\special{pa 1920 1160}%
\special{fp}%
%
\special{pn 0}%
\special{sh 0.150}%
\special{pa 1920 1160}%
\special{pa 2100 1160}%
\special{pa 2100 1340}%
\special{pa 1920 1340}%
\special{pa 1920 1160}%
\special{ip}%
\special{pn 8}%
\special{pa 1920 1160}%
\special{pa 2100 1160}%
\special{pa 2100 1340}%
\special{pa 1920 1340}%
\special{pa 1920 1160}%
\special{pa 2100 1160}%
\special{fp}%
%
\special{pn 8}%
\special{pa 1200 1340}%
\special{pa 1380 1340}%
\special{pa 1380 1520}%
\special{pa 1200 1520}%
\special{pa 1200 1340}%
\special{pa 1380 1340}%
\special{fp}%
%
\special{pn 8}%
\special{pa 1380 1340}%
\special{pa 1560 1340}%
\special{pa 1560 1520}%
\special{pa 1380 1520}%
\special{pa 1380 1340}%
\special{pa 1560 1340}%
\special{fp}%
%
\special{pn 8}%
\special{pa 1560 1340}%
\special{pa 1740 1340}%
\special{pa 1740 1520}%
\special{pa 1560 1520}%
\special{pa 1560 1340}%
\special{pa 1740 1340}%
\special{fp}%
%
\special{pn 0}%
\special{sh 0.150}%
\special{pa 1740 1340}%
\special{pa 1920 1340}%
\special{pa 1920 1520}%
\special{pa 1740 1520}%
\special{pa 1740 1340}%
\special{ip}%
\special{pn 8}%
\special{pa 1740 1340}%
\special{pa 1920 1340}%
\special{pa 1920 1520}%
\special{pa 1740 1520}%
\special{pa 1740 1340}%
\special{pa 1920 1340}%
\special{fp}%
%
\special{pn 0}%
\special{sh 0.150}%
\special{pa 1920 1340}%
\special{pa 2100 1340}%
\special{pa 2100 1520}%
\special{pa 1920 1520}%
\special{pa 1920 1340}%
\special{ip}%
\special{pn 8}%
\special{pa 1920 1340}%
\special{pa 2100 1340}%
\special{pa 2100 1520}%
\special{pa 1920 1520}%
\special{pa 1920 1340}%
\special{pa 2100 1340}%
\special{fp}%
%
\special{pn 0}%
\special{sh 0.150}%
\special{pa 1200 1520}%
\special{pa 1380 1520}%
\special{pa 1380 1700}%
\special{pa 1200 1700}%
\special{pa 1200 1520}%
\special{ip}%
\special{pn 8}%
\special{pa 1200 1520}%
\special{pa 1380 1520}%
\special{pa 1380 1700}%
\special{pa 1200 1700}%
\special{pa 1200 1520}%
\special{pa 1380 1520}%
\special{fp}%
%
\special{pn 8}%
\special{pa 1380 1520}%
\special{pa 1560 1520}%
\special{pa 1560 1700}%
\special{pa 1380 1700}%
\special{pa 1380 1520}%
\special{pa 1560 1520}%
\special{fp}%
%
\special{pn 8}%
\special{pa 1560 1520}%
\special{pa 1740 1520}%
\special{pa 1740 1700}%
\special{pa 1560 1700}%
\special{pa 1560 1520}%
\special{pa 1740 1520}%
\special{fp}%
%
\special{pn 0}%
\special{sh 0.150}%
\special{pa 1740 1520}%
\special{pa 1920 1520}%
\special{pa 1920 1700}%
\special{pa 1740 1700}%
\special{pa 1740 1520}%
\special{ip}%
\special{pn 8}%
\special{pa 1740 1520}%
\special{pa 1920 1520}%
\special{pa 1920 1700}%
\special{pa 1740 1700}%
\special{pa 1740 1520}%
\special{pa 1920 1520}%
\special{fp}%
%
\special{pn 0}%
\special{sh 0.150}%
\special{pa 1920 1520}%
\special{pa 2100 1520}%
\special{pa 2100 1700}%
\special{pa 1920 1700}%
\special{pa 1920 1520}%
\special{ip}%
\special{pn 8}%
\special{pa 1920 1520}%
\special{pa 2100 1520}%
\special{pa 2100 1700}%
\special{pa 1920 1700}%
\special{pa 1920 1520}%
\special{pa 2100 1520}%
\special{fp}%
%
\special{sh 1.000}%
\special{ia 1290 1430 27 27 0.0000000 6.2831853}%
\special{pn 8}%
\special{ar 1290 1430 27 27 0.0000000 6.2831853}%
%
\special{sh 1.000}%
\special{ia 1470 1610 27 27 0.0000000 6.2831853}%
\special{pn 8}%
\special{ar 1470 1610 27 27 0.0000000 6.2831853}%
\end{picture}}%
\]

We now state the main theorems of this paper.
We assume that the linear operator $N$ is the regular nilpotent matrix in Jordan canonical form and that Hessenberg functions $h\colon[n]\rightarrow [n]$ satisfy the condition
\begin{align}\label{eq: basic assumption}
 h(i)\ge i+1 \quad \text{for $1\le i<n$}.
\end{align}
 
\begin{theorem}\label{thm: main in intro}
Let $\Hess(N,h)$ be a regular nilpotent Hessenberg variety. A permutation flag $w_{\bullet}\in \Hess(N,h)$ is a singular point of $\Hess(N,h)$ if and only if $\hc{w}$ $($defined in \eqref{eq: def of Hw}$)$ contains a lower diagonal full-string.
\end{theorem}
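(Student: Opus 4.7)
The plan is to translate the singularity question into a rank computation for the Jacobian of an explicit set of local defining equations, and then to read off the lower diagonal full-string condition from this combinatorial linear algebra problem.

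First, on an affine neighborhood of $w_\bullet$ in $\Flags$ (a $w$-translate of the standard big cell), I would parameterize points by upper-unitriangular coordinates and write down the local defining equations of $\Hess(N,h)$ using the explicit generators given in Proposition~3.7 of \cite{ADGH18}. Since $\Hess(N,h)$ is a local complete intersection by Corollary~3.17 of \cite{ADGH18}, the permutation flag $w_\bullet$ is singular in $\Hess(N,h)$ if and only if the Jacobian matrix $\Jac(w)$ of this generating set, evaluated at $w_\bullet$, fails to attain full rank equal to the codimension of $\Hess(N,h)$ in $\Flags$.

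Next, I would compute $\Jac(w)$ explicitly. Because $N$ acts as the shift $e_i\mapsto e_{i-1}$, the linear part of each generating equation at $w_\bullet$ couples only coordinate variables whose row and column indices differ by a single unit in a very constrained way. Carrying out the conjugation by $w$ and keeping only linear terms, one can reorder rows and columns so that $\Jac(w)$ becomes a direct sum of blocks indexed by the diagonals of $[n]\times[n]$ parallel to the main diagonal. Within the block for a given diagonal, the rows are indexed by the shaded positions on that diagonal (the generators) and the columns by the positions of $\hc{w}$ occurring on the neighboring diagonals, and the resulting matrix is essentially bidiagonal, with its nonzero entries dictated by which consecutive positions straddle the shaded/unshaded boundary. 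The three pieces $\Jdl{}$, $\Jdc{}$, and $\Jdr{}$ signaled by the paper's notation should correspond exactly to the three kinds of contributions (from the left neighbor, the position itself, and the right neighbor) in this bidiagonal pattern.

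Finally, I would show that a single block of $\Jac(w)$ fails to have full rank if and only if every position on the associated diagonal, from the left edge of the grid through the bottom edge, belongs to $\hc{w}$, i.e.\ $\hc{w}$ contains a lower diagonal full-string supported on that diagonal. In the forward direction, one exhibits an explicit kernel vector supported on the full-string, with signs chosen to exploit the bidiagonal cancellation; in the reverse direction, as soon as the chain of unshaded boxes is broken by a shaded box somewhere on the diagonal, that shaded position contributes a pivot row which, together with the analogous argument propagated from either end of the diagonal, row-reduces the block to an invertible upper-triangular form. The main obstacle will be the middle step: recording exactly which entries of $\Jac(w)$ survive after conjugation by $w$, and verifying that the clean block-diagonal decomposition really does hold with no cross-diagonal coupling, since the interplay between the Hessenberg function $h$ and the permutation $w$ can render the indexing delicate and may force one to set up notation carefully before the combinatorial pattern is visible.
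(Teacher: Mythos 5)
Your plan is essentially the paper's own proof: pass to the affine chart $wU^-$, invoke the Jacobian criterion for the local complete intersection, observe that at $w_\bullet$ only the linear terms $-z_{i,j-1}+z_{i+1,j}$ of the generators survive so the Jacobian splits into bidiagonal blocks indexed by diagonals, and check that a block drops rank exactly when the corresponding diagonal of $\hc{w}$ is a full string from the first column to the bottom row. The step you flag as the main obstacle --- pinning down which entries survive conjugation by $w$ --- is indeed where the paper invests its effort (the explicit expansion of $(M^{-1}NM)_{i,j}$, the net-index-difference invariant, and the identification of the linear terms), but your outline of the endgame, including the left null vector of all ones on a full-string block and the pivot produced by any break in the string, matches the paper's case analysis.
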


\begin{example}\label{ex: intro}
Let $h=(3,3,4,5,5)$ as above. As we will see in Lemma~\ref{lem: permutation flags in Hess 2}, $w=32154$ in one-line notation gives a permutation flag in $\Hess(N,h)$. We have already seen above that the set $\hc{w}$ contains a lower diagonal full-string.  
Hence Theorem~\ref{thm: main in intro} says that $w_\bullet$ is a singular point of $\Hess(N,h)$.
\end{example}

\vspace{10pt}

We apply Theorem~\ref{thm: main in intro} to characterize the condition that $\Hess(N,h)$ is a normal algebraic variety.
Recall that we are assuming \eqref{eq: basic assumption}.

\begin{theorem} \label{thm:normal in intro}
A regular nilpotent Hessenberg variety $\Hess(N,h)$ is normal if and only if we have $h(i-1)> i$ or $h(i)> i+1$ for all $1 < i < n-1$.
\end{theorem}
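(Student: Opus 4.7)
The proof will apply Serre's criterion for normality. By \cite{ADGH18}, $\Hess(N,h)$ is a local complete intersection, hence Cohen--Macaulay, so it is normal if and only if its singular locus has codimension at least two. Using the affine paving of $\Hess(N,h)$ by the Hessenberg--Schubert cells $\SC{w}$, together with the fact that the singular locus is closed and torus-invariant, one sees that $\Hess(N,h)$ fails to be normal iff some cell $\SC{w}$ of codimension one is entirely contained in the singular locus. Since the permutation flag $w_\bullet$ lies in $\SC{w}$, Theorem~\ref{thm: main in intro} shows that such a cell must have a singular permutation flag, so the analysis reduces to codimension-one cells whose permutation flag is singular.

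For the ``only if'' direction, assume the combinatorial condition on $h$ holds. A codimension-one cell $\SC{w}$ corresponds, via the standard inversion-counting dimension formula for Hessenberg--Schubert cells, to a permutation $w$ whose Hessenberg-inversion count is $\dim\Hess(N,h)-1$; I would enumerate the possible $w$ combinatorially and, in each case, inspect $\hc{w}$ to rule out a lower diagonal full-string. The key point is that such a full-string, combined with the codimension-one hypothesis, would force two adjacent columns $i_0-1,i_0$ with $1<i_0<n-1$ in which every box above the diagonal is unshaded, i.e.\ $h(i_0-1)=i_0$ and $h(i_0)=i_0+1$, contradicting the assumption. Hence every codimension-one cell has a smooth permutation flag, and the singular locus has codimension at least two.

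For the ``if'' direction, suppose the condition fails, so there is $i_0$ with $1<i_0<n-1$, $h(i_0-1)=i_0$, and $h(i_0)=i_0+1$. I would construct an explicit permutation $w$ exploiting the tight columns at $i_0-1$ and $i_0$ (a natural candidate places $i_0+1,\,i_0,\,i_0-1$ at positions $i_0-1,\,i_0,\,i_0+1$, with the remaining entries filled appropriately) and verify: (a)~$w_\bullet\in\Hess(N,h)$ via the standard combinatorial characterization of permutation flags in $\Hess(N,h)$; (b)~$\hc{w}$ contains a lower diagonal full-string, so $w_\bullet$ is singular by Theorem~\ref{thm: main in intro}; and (c)~$\SC{w}$ has codimension one in $\Hess(N,h)$. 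The conclusion then follows once we establish (d): every point of $\SC{w}$ is singular. I would prove (d) by working with the explicit generators of the local defining ideal from \cite[Prop.~3.7]{ADGH18} and carrying out a direct Jacobian computation along $\SC{w}$, using the tight conditions $h(i_0-1)=i_0$ and $h(i_0)=i_0+1$ to propagate the rank drop from $w_\bullet$ to every other point of the cell.

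The main obstacle is step~(d): Theorem~\ref{thm: main in intro} only detects singularities at permutation flags, and the introduction explicitly warns that some Hessenberg--Schubert cells are singular at the permutation flag but smooth at most other points within the cell. Ruling out this phenomenon for the specific cell $\SC{w}$ built from a bad $i_0$ is where the hypothesis on two consecutive tight columns plays an essential role, and this is the heart of the argument.
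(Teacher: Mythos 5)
Your high-level strategy coincides with the paper's: Serre's criterion for the local complete intersection $\Hess(N,h)$, reduction to codimension-one Hessenberg--Schubert cells, and Theorem~\ref{thm: main in intro} applied at permutation flags. However, the two steps you leave as declarations of intent are exactly where the substance of the proof lies, and neither is supplied. The first gap is the enumeration of codimension-one cells: you say you ``would enumerate the possible $w$ combinatorially'' and assert that a lower diagonal full-string in such a cell ``would force'' two adjacent tight columns, but no argument is given for either claim. The paper does not derive this from scratch; it imports the fact that there are exactly $n-1$ codimension-one cells (via Precup's palindromicity of Betti numbers) and that they are the explicit permutations $p_1,\dots,p_{n-1}$ of Cho--Hong--Lee (Proposition~\ref{prop:codim1}), then checks in cases (i)--(iii) that $p_i^{-1}(1)\ge n-1$ or $p_i^{-1}(n)\le 2$, which rules out any full-string (Lemmas~\ref{lem: nonsingular codim 1 cell} and~\ref{lem: nonsingular codim 1 cell 2}). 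Without this classification, your ``key point'' in the normality direction is an unsupported assertion.

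The second and more serious gap is your step (d). You rightly note that Theorem~\ref{thm: main in intro} only detects singularity at $w_\bullet$ and that (as in Example~\ref{ex:cellnonsingular}) the rank drop need not persist across the cell; but ``a direct Jacobian computation to propagate the rank drop'' is precisely the missing idea, not a proof of it. The paper's mechanism (Proposition~\ref{prop: sufficient condition on cell}(2)) is quite specific: for the case-(iv) permutation $p_{i_0}=w_{P_{i_0}}$ one shows $\{(n-1,1),(n,2)\}\subseteq \hc{p_{i_0}}$, a lower diagonal full-string of height $n-2$, and for heights $n-1$ or $n-2$ there are no genuine variables $z_{p,q}$ with $p-q>d+1$, so the block $\Jdr{d}$ is empty and the entire Jacobian row reduces to the central block $\Jdc{d}$, which is constant on the cell and rank-deficient. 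Your candidate permutation (placing $i_0+1,i_0,i_0-1$ at positions $i_0-1,i_0,i_0+1$) differs from $w_{P_{i_0}}$, is not fully specified, and there is no indication it yields a full-string of height $n-1$ or $n-2$ --- the hypothesis that makes the propagation work. As written, the non-normality direction does not go through.
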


Theorem~\ref{thm:normal in intro} gives us a simple combinatorial recipe to check a regular nilpotent Hessenberg variety for normality by scanning its Hessenberg function.
Colloquially, $\Hess(N,h)$ is not normal if there is some index $1< i < n-1 $ where the Hessenberg function looks like that of the Peterson variety in both positions $i-1$ and $i$.

\begin{example} When $n\le3$, all regular nilpotent Hessenberg varieties are normal.
We list all of the Hessenberg functions satisfying \eqref{eq: basic assumption} when $n=4$ and highlight in red the pair of indexes that offend the normality condition of Theorem~\ref{thm:normal in intro}:
$$
(\textcolor{red}{\textbf{2}},\textcolor{red}{\textbf{3}},4,4)  \quad (3,3,4,4)  \quad  (2,4,4,4) \quad (3,4,4,4) \quad (4,4,4,4).$$
Thus we see that the Peterson variety ($h=(\textcolor{red}{\textbf{2}},\textcolor{red}{\textbf{3}},4,4)$) is the only regular nilpotent Hessenberg variety that is not normal when $n=4$ under the condition \eqref{eq: basic assumption}.
Theorem~\ref{thm:normal in intro}  also shows that $\Hess(N,h)$ considered in Example~\ref{ex: intro} is not normal
because $(3,\textcolor{red}{\textbf{3}},\textcolor{red}{\textbf{4}},5,5)$ offends the normality condition at the index highlighted in red.
 
\end{example}

\section{Background on regular nilpotent Hessenberg varieties}\label{sec: background}

In this section, we recall the necessary background and establish our conventions regarding subvarieties of the flag variety.
Let $n\ge2$, and $\GL_n(\C)$ denote the general linear group and $B \subset \GL_n(\C)$ denote the Borel subgroup of upper-triangular matrices. 
The \textbf{flag variety} is the homogenous space $$\Flags(\C^n) \coloneqq  
  \{ V_{\bullet} = (\{0\} \subset V_1 \subset V_2 \subset \cdots \subset
  V_n = \C^n) \mid \dim_{\C} V_i = i\ (1\le i\le n) \}$$ consisting of all nested sequences of linear subspaces of $\C^n$. 
It is well-known that 
\begin{equation}\label{eq:flag}
 \Flags(\C^n) \cong   \GL_n(\C)/B ,
\end{equation} where this isomorphism takes a coset $gB\in \GL_n(\C)/B$, for $g \in \GL_n(\C)$, to the
flag $V_\bullet$ with $V_i$ defined by the span of the first $i$
columns of $g$.
Henceforth, we primarily use the coset definition of the flag variety and $ \GL_n(\C)/B$ to denote the flag variety.

Throughout this paper, we use the notation 
\begin{align*}
 [n] \coloneqq  \{1,2,\ldots,n\}.
\end{align*}
A \textbf{Hessenberg function} is a function $h\colon [n] \to [n]$
satisfying $h(i) \geq i$ for all $1 \leq i \leq n$ and
$h(i)\le h(i+1)$ for all $1 \leq i < n$, and we frequently denote a
Hessenberg function by listing its values in a sequence,
$h = (h(1), h(2), \ldots, h(n))$. 
As we saw in Section~\ref{subsec: preview}, we may identify a Hessenberg function $h$ and a configuration of shaded boxes in $[n]\times [n]$.

\begin{definition}\label{HessDefn}
  Let $X$ be an $n\times n$ matrix, viewed as a linear map $X\colon \C^n\rightarrow\C^n$, and
  $h\colon [n] \to [n]$ a Hessenberg function. 
  The \textbf{Hessenberg variety} associated to $X$ and
  $h$ is defined to be $$\Hess(X,h)\coloneqq \{V_\bullet\in \Flags(\C^n)\mid
  XV_i\subseteq V_{h(i)} \ (1\le i\le n)\}.$$ Equivalently, under the
  identification~\eqref{eq:flag}, we have
\begin{equation}\label{eq:def Hess conj}
\Hess(X,h) = \{ gB \in \GL_n(\C)/B \mid (g^{-1}Xg)_{i,j}=0\ (i>h(j)) \}.
\end{equation}
\end{definition}

Any Hessenberg variety $\Hess(X,h)$ is, by definition,
an algebraic subset of the flag variety $\GL_n(\C)/B$.  It is
straightforward to see that $\Hess(X,h)$ and $\Hess(gXg^{-1},h)$ are
isomorphic for all $g\in \GL_n(\C)$, so we frequently assume
without loss of generality that $X$ is in Jordan canonical form (with one Jordan block and ones along its super-diagonal) with respect to the standard basis on $\C^n$.

\begin{definition}\label{def:reg nilp}
A \textbf{regular nilpotent Hessenberg variety} $\Hess(N,h)$ is the Hessenberg variety associated to the principal nilpotent matrix $N$ in Jordan canonical form:
\begin{equation*}
N=
\begin{pmatrix}
0 & 1 & &  & \\
 & 0 &  1  & \\
 &    & \ddots & \ddots & \\
 &    &   & 0 & 1 \\
 &    &  & & 0 \\ 
\end{pmatrix}.
\end{equation*}
\end{definition}

It is known from \cite[Lemma 7.1]{AT10} that $\Hess(N,h)$ is irreducible.
It is also known from \cite{Tym04} that 
\begin{align}\label{eq: dim of Hess}
\dim_{\C} \Hess(N,h)=\sum_{j=1}^n (h(j)-j)
\end{align}
which is precisely the number of shaded boxes below the diagonal of the configuration of boxes corresponding to the Hessenberg function $h$ (see Section~\ref{subsec: preview}).
In particular, it follows that
\begin{align}\label{eq: codim of Hess}
\codim_{\C} (\Hess(N,h),\Flags(\C^n))
 =| \{(i,j)\in[n]\times[n] \mid i>h(j)\} |,
\end{align}
where the right hand side is the number of unshaded boxes in the configuration of boxes corresponding to the Hessenberg function $h$.

Let $\Sn$ denote the permutation group on $[n]$.
The Bruhat decomposition of the general linear group $$\GL_n(\C) = \coprod_{w \in \mathfrak{S}_n} BwB$$
gives rise to a Schubert cell decomposition of the flag variety
\begin{align}\label{eq: Bruhat decomposition}
 \GL_n(\C)/B = \coprod_{w \in \mathfrak{S}_n} \SC{w} ,
\end{align}
where $ \SC{w} \coloneqq  BwB/B$ is referred to as the \textbf{Schubert cell corresponding to $w\in\Sn$}.
A Schubert cell $\SC{w}$ can be represented by a set of matrices $g$ with $1$s in position $(w(j),j)$ for $1\le j\le n$, $0$s below and to the right of the $1$s, and free entries above and to the left of the $1$s.
The Zariski-closure of a Schubert cell is a \textbf{Schubert variety} $X_w$ and it consists of a union of cells  $$X_w = \coprod_{u \leq w } \SC{w},$$  
where $u \leq w$ in the Bruhat order of $\mathfrak{S}_n.$
For each $w\in\Sn$, there is the associated point $w_{\bullet}$ in $\Flags(\C^n)$ given by
$$ 
w_{\bullet} \coloneqq ( \langle e_{w(1)} \rangle_{\C} \subset \langle e_{w(1)},e_{w(2)} \rangle_{\C} \subset \cdots \subset \langle e_{w(1)},e_{w(2)},\ldots,e_{w(n)} \rangle_{\C} ),
$$
where $e_1,e_2,\ldots,e_n$ are the standard basis of $\C^n$. 
This $w_{\bullet}$ is called the permutation flag corresponding to $w\in\Sn$.
The Schubert cell $\SC{w}$ is the $B$-orbit of $w_{\bullet}$ in $\GL_n(\C)/B$ with respect to the standard $B$-action from the left. 

It has become quite commonplace to refer to the set $\Hess(N,h)\cap \SC{w}$ as a \textbf{Hessenberg-Schubert cell}.  
From \eqref{eq: Bruhat decomposition}, it is clear that we have 
\begin{align}\label{eq: def of HS cell}
\Hess(N,h) =  \coprod_{w_{\bullet} \in \Hess(N,h)} \Hess(N,h) \cap \SC{w}  .
\end{align}
Tymoczko proved in \cite{Tym06} that this decomposition forms a paving by affines. In particular, each Hessenberg-Schubert cell $\Hess(N,h) \cap \SC{w}$ is isomorphic to a complex affine space $\C^d$ for some $d\ge1$.
Precup generalized it to a much larger family of Hessenberg varieties in all Lie types \cite{Pre13}. 
The following claim tells us which permutation flags $w_{\bullet}$ appear in \eqref{eq: def of HS cell}.

\begin{lemma}\label{lem: permutation flags in Hess 2} \emph{\cite[Proposition~2.6]{Tym04}}
The permutation flag $w_{\bullet}\in Fl(\C^n)$ lies in $\Hess(N,h)$ if and only if $w^{-1}(i) \leq h(w^{-1}(i+1)) $ for all $1\le i<n$.
\end{lemma}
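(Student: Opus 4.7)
The plan is to unpack the containment condition $NV_i \subseteq V_{h(i)}$ for the permutation flag $w_\bullet$ directly in terms of the standard basis, and then use the monotonicity of $h$ to collapse the resulting family of inequalities to the stated one.

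First I would fix $w\in\Sn$ and note that for the permutation flag $w_\bullet$, one has $V_i = \langle e_{w(1)},\ldots,e_{w(i)}\rangle_{\C}$. Because $N$ is the principal nilpotent in Jordan form with respect to the standard basis, $Ne_k = e_{k-1}$ for $k\geq 2$ and $Ne_1 = 0$. Consequently $NV_i$ is spanned by the vectors $e_{w(j)-1}$ for $1\leq j\leq i$, with the convention $e_0 := 0$. The condition $NV_i \subseteq V_{h(i)}$ is therefore equivalent to requiring, for every $j\leq i$ with $w(j)\geq 2$, that $e_{w(j)-1} \in V_{h(i)} = \langle e_{w(1)},\ldots,e_{w(h(i))}\rangle_{\C}$, which in turn says $w^{-1}(w(j)-1)\leq h(i)$.

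Now I would reindex by setting $m := w(j)-1$, so that $j = w^{-1}(m+1)$ and the constraint $j\leq i$ becomes $w^{-1}(m+1)\leq i$. Thus $w_\bullet\in\Hess(N,h)$ is equivalent to the family of inequalities
\begin{equation*}
w^{-1}(m) \leq h(i) \qquad \text{for all } 1\leq m\leq n-1 \text{ and all } i \geq w^{-1}(m+1).
\end{equation*}
Since $h$ is weakly increasing, the strongest inequality in this family for a given $m$ is obtained by taking $i$ as small as possible, namely $i = w^{-1}(m+1)$. Hence the whole family is equivalent to
\begin{equation*}
w^{-1}(m) \leq h(w^{-1}(m+1)) \qquad \text{for all } 1\leq m\leq n-1,
\end{equation*}
which, after renaming $m$ to $i$, is precisely the condition in the statement.

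There is no real obstacle here: the argument is a direct unwinding of the definition of $\Hess(N,h)$ together with the monotonicity of $h$. The only point requiring a little care is the reindexing $m = w(j)-1$ and confirming that vectors $e_{w(j)-1}$ coming from $w(j)=1$ contribute nothing and so can be ignored.
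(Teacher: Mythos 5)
Your proof is correct: the paper itself gives no argument for this lemma, citing only \cite[Proposition~2.6]{Tym04}, and your direct unwinding of $NV_i\subseteq V_{h(i)}$ for the permutation flag, followed by the reduction of the family of inequalities to the tightest one $i=w^{-1}(m+1)$ via the monotonicity of $h$, is exactly the standard argument behind that reference. The one point worth stating explicitly is that the reduction uses that $h$ is weakly increasing, so the smallest admissible $i$ gives the smallest value $h(i)$ and hence the binding constraint; you do note this, so the proof is complete.
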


\bigskip

\section{Generators of the defining ideals of $\Hess(N,h)$}\label{sec: Linear terms}

\subsection{Local descriptions of $\Hess(N,h)$}\label{subsec: local description}

In the rest of this paper, we always assume that condition \eqref{eq: basic assumption} in Section~\ref{subsec: preview} holds.
It is well-known that the flag variety $\Flags(\C^n)$ can be covered by affine open subsets, each isomorphic to $\C^{n(n-1)/2}$. Let $U^-\subset \GL_n(\C)$ be the unipotent subgroup consisting of lower triangular matrices whose diagonal components are $1$.
Then we have an isomorphism $U^-\cong \C^{n(n-1)/2}$ by taking strict lower triangular components.
The map
\begin{align*}
U^- \rightarrow \GL_n(\C)/B
\quad ; \quad
g \mapsto gB
\end{align*}
is known to be an open embedding.
For each permutation $w\in\Sn$, we denote by $wU^-\subset \GL_n(\C)/B$ the $w$-translate of the  image of this open embedding:
\begin{align*}
wU^- \coloneqq\{wgB\in \GL_n(\C)/B\mid g\in U^-\}.
\end{align*}
By construction, we have $wU^-\cong \C^{n(n-1)/2}$ for each $w\in\Sn$, and it is well-known that these form an affine open cover of $\Flags(\C^n)$ :
\begin{align*}
\Flags(\C^n) = \bigcup_{w\in\mathfrak{S}_n} wU^- .
\end{align*} 
This gives us an affine open cover of $\Hess(N,h)$ :
\begin{align*}
\Hess(N,h) = \bigcup_{w\in\mathfrak{S}_n}  \Hess(N,h) \cap wU^- .
\end{align*}
We note that we may remove some pieces from this cover as follows.
Recall that we have the decomposition \eqref{eq: def of HS cell} of $\Hess(N,h)$ into the Hessenberg-Schubert cells $\Hess(N,h)\cap \SC{w}$.
Since we have $\SC{w}\subseteq wU^-$, it follows from \eqref{eq: def of HS cell} that
\begin{align}\label{eq: open cover of Hess}
\Hess(N,h) = \bigcup_{w_{\bullet}\in\Hess(N,h)} \Hess(N,h) \cap wU^- ,
\end{align}
where the union takes only the permutation flags $w_{\bullet}$ lying in $\Hess(N,h)$.

We now describe the coordinate rings of the affine varieties $wU^-$ and their subvarieties $\Hess(N,h) \cap wU^-$.
By the construction above, an element in $wU^-$ is uniquely determined by an $n\times n$ matrix
$g=(x_{i,j})$ whose entries are subject to the following relations
    \begin{equation}
      \label{eq:11}
      \begin{split}
        &x_{w(j),j} = 1, \quad (j\in [n]),\\
        &x_{w(i),j} = 0, \quad (i<j).
      \end{split}
    \end{equation}
    Thus the coordinate ring of $wU^-$, denoted by
    $\C [ \mathbf{x}_w]$, is the quotient
    of the polynomial ring $\C [x_{i,j}\mid i,j\in[n]]$ by the relations
    \eqref{eq:11}. 
More precisely, we set
\[
\C[{\bf x}_w]\coloneqq \C[x_{i,j} \mid i,j\in[n]]/I_w ,
\]
where $I_w$ is the ideal of $\C[{\bf x}_w]$ generated by 
\[
\text{$x_{w(j),j}-1 \ \ (j\in [n])$ \quad and \quad  $x_{w(i),j} \ \ (i<j)$.}
\]
Namely, $\C[{\bf x}_w]$ is the coordinate ring of $wU^-(\subset \GL_n(\C)/B)$.
For simplicity, we use the same symbol $x_{i,j}$ to denote its image in $\C[{\bf x}_w]$.
By having \eqref{eq:11} in mind, we may identify the quotient ring $\C[{\bf x}_w]$ as the polynomial ring over $\C$ with variables $x_{i,j}$ satisfying $w^{-1}(i)>j$.
This leads us to the following definition.

\begin{definition}\label{def: genuine variable}
We call $x_{i,j}\in\C[{\bf x}_w]$ a \textbf{genuine variable} if $w^{-1}(i)>j$.
\end{definition}

\begin{example}\label{ex: genuine variables}
Let $n=4$.
For $w= 3142$ in one-line notation, the permutation matrix associated to $w$ is
\[
\begin{pmatrix}
 & 1 & & \\
 & &  & 1 \\
 1 & & & \\
 & & 1 & 
\end{pmatrix},
\]
and the genuine variables of the ring $\C[{\bf x}_w]$ can be visualized in matrix form
\begin{align*}
\begin{pmatrix}
 x_{11} & 1 & & \\
 x_{21} & x_{22} & x_{23} & 1 \\
 1 & & & \\
 x_{41} & x_{42} & 1 & 
\end{pmatrix}.
\end{align*}
\end{example} 

\vspace{20pt}

For the rest of this section, we fix a  permutation $w\in\Sn$ to focus on the open affine subvariety $\Hess(N,h)\cap wU^-$ of $\Hess(N,h)$.
From 
\eqref{eq: open cover of Hess}, we know that it is enough to study $\Hess(N,h)\cap wU^-$ with $w_{\bullet}\in\Hess(N,h)$, but we do not assume this until we need it.
To describe the coordinate ring of $\Hess(N,h)\cap wU^-$, we consider the collection of variables in the ring $\C[{\bf x}_w]$ in the matrix form
\begin{align*}
M\coloneqq
\begin{pmatrix}
   x_{1,1} &  \cdots  &  x_{1,n}  \\ 
   \vdots &  \ddots  &  \vdots  \\
   x_{n,1}  &  \cdots  & x_{n,n}
\end{pmatrix}.
\end{align*}
where we have
\begin{equation}\label{eq: convention of xij}
\begin{split}
&x_{w(j),j}=1 \quad (j\in[n]), \\
&x_{w(i), j}=0 \quad (i<j).
\end{split}
\end{equation}
These conditions imply that the matrix $M$ is invertible over $\C[{\bf x}_w]$.
We set
\begin{align*}
f_{i, j}:&=
(M^{-1}N M)_{i, j} \in \C[{\bf x}_w],
\end{align*}
where $N$ is the regular nilpotent matrix given in Definition~\ref{def:reg nilp}.
We consider the ideal of $\C[{\bf x}_w]$ defined by
\begin{align}\label{eq: def of Hessenberg ideal}
J_{w,h} \coloneqq  ( f_{i, j} \mid i>h(j) ) \subseteq \C[{\bf x}_w].
\end{align}
It is clear from \eqref{eq:def Hess conj} that functions in $J_{w,h}$ vanish on $\Hess(N,h)\cap wU^-$.
The following claim says that $J_{w,h}$ is in fact the defining ideal of $\Hess(N,h)\cap wU^-$ in $wU^-$  under the assumption \eqref{eq: basic assumption}.

\begin{proposition}
$($\emph{\cite[Proposition 3.7]{ADGH18}}$)$
The quotient ring $\C[{\bf x}_w]/J_{w,h}$ is the coordinate ring of $\Hess(N,h)\cap wU^-$.
In particular, the ideal $J_{w,h}$ is radical. 
\end{proposition}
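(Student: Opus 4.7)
The plan is threefold: first, establish the set-theoretic identity $V(J_{w,h}) = \Hess(N,h) \cap wU^-$; second, show that $J_{w,h}$ is a complete intersection ideal, so $\C[\mathbf{x}_w]/J_{w,h}$ is Cohen--Macaulay; third, verify generic reducedness and conclude that $J_{w,h}$ is radical.

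Step one is essentially a restatement of the conjugation formula~\eqref{eq:def Hess conj}: writing a point of $wU^-$ as the coset of a matrix $M$ subject to~\eqref{eq: convention of xij}, the Hessenberg condition $NV_i \subseteq V_{h(i)}$ translates to the vanishing of $(M^{-1}NM)_{i,j} = f_{i,j}$ for all $i > h(j)$, so $V(J_{w,h}) = \Hess(N,h) \cap wU^-$ as subsets of $wU^-$.

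For step two, count generators: by~\eqref{eq: codim of Hess}, the ideal $J_{w,h}$ has exactly $\codim_{\C}(\Hess(N,h), \Flags(\C^n))$ generators. Under the running assumption $w_\bullet \in \Hess(N,h)$, the intersection $\Hess(N,h) \cap wU^-$ is a nonempty open subset of the irreducible variety $\Hess(N,h)$ (irreducibility by \cite[Lemma~7.1]{AT10}), hence of dimension $\sum_{j=1}^n(h(j)-j)$ by~\eqref{eq: dim of Hess}. Thus the codimension of $V(J_{w,h})$ in the smooth affine space $wU^-$ matches the generator count, and Krull's height theorem forces the $f_{i,j}$ (for $i > h(j)$) to form a regular sequence in the polynomial ring $\C[\mathbf{x}_w]$. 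Consequently $J_{w,h}$ is a complete intersection ideal and $\C[\mathbf{x}_w]/J_{w,h}$ is Cohen--Macaulay, so it has no embedded associated primes.

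The third step—generic reducedness—is the main obstacle. Since $V(J_{w,h})$ is irreducible, the ring $\C[\mathbf{x}_w]/J_{w,h}$ has a unique minimal prime, and by Serre's $R_0 + S_1$ criterion together with the Cohen--Macaulayness obtained above, reducedness is equivalent to regularity at a single closed point of $V(J_{w,h})$, i.e., to the Jacobian matrix $\bigl(\partial f_{i,j}/\partial x_{k,\ell}\bigr)_{i>h(j)}$ attaining rank equal to the number of generators at some point. Moreover, since the subscheme cut out by the conditions $(M^{-1}NM)_{i,j}=0$ ($i>h(j)$) is globally well-defined on $\Flags(\C^n)$ (conjugation by elements of $B$ preserves the Hessenberg pattern), the scheme structures $V(J_{w,h})$ glue, so reducedness at the generic point is chart-independent and may be checked in a single convenient chart. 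The natural choice is the identity chart $w = e$, because $e_\bullet \in \Hess(N,h)$ is automatic from~\eqref{eq: basic assumption} and $H_e^c = \{(i,j) : i < h(j)\}$ never contains a lower diagonal full-string (the endpoint $(n, n-d+1)$ would require $n < h(n-d+1) \leq n$). At the origin of $eU^-$, expanding $M^{-1}NM$ to first order in the genuine variables produces explicit linear forms for each $f_{i,j}$, and a careful enumeration of the pairs $(i,j)$ with $i > h(j)$—carried out in \cite{ADGH18}—shows that these linear forms have a triangular pattern which makes them linearly independent. This yields full rank of the Jacobian at $e_\bullet$, hence generic reducedness; combined with Cohen--Macaulayness it delivers $J_{w,h} = \sqrt{J_{w,h}}$, completing the proof.
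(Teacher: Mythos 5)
The paper itself offers no proof of this proposition -- it is quoted from \cite[Proposition~3.7]{ADGH18} -- so your attempt to supply one is welcome, and Steps 1 and 2 (the set-theoretic identity, the codimension count, and the deduction via Krull's height theorem that the $f_{i,j}$ form a regular sequence so that $\C[\mathbf{x}_w]/J_{w,h}$ is Cohen--Macaulay) are sound. The gap is in Step 3. Your claim that $\hc{e}$ never contains a lower diagonal full-string, and hence that the Jacobian attains full rank at $e_\bullet$, is false. You have read the condition as $i<h(j)$ (the sign in \eqref{eq: def of Hw} is a typo); the definition actually used throughout the paper is the second line of \eqref{eq:r_h}, which for $w=e$ gives $\hc{e}=\{(i,j):i>h(j)\}$, the \emph{unshaded} boxes. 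For the Peterson function $h=(2,3,\dots,n,n)$ with $n\ge 3$ the string $\{(3,1),(4,2),\dots,(n,n-2)\}$ lies entirely in $\hc{e}$, and by Theorem~\ref{thm:fixed_point_singular} the flag $e_\bullet$ is then a \emph{singular} point of $\Hess(N,h)$ -- equivalently, the Jacobian of the $f_{i,j}$ drops rank there. (Concretely, for $h=(2,3,4,4)$ the three generators $f_{3,1},f_{4,1},f_{4,2}$ have linear parts $x_{4,1}$, $0$, $-x_{4,1}$ at $e_\bullet$, so the Jacobian has rank $1$, not $3$; and the paper's Example~\ref{ex: singularity_list} explicitly lists $12345$ among the singular permutation flags for $h=(3,3,4,5,5)$.) So the point at which you certify $R_0$ does not exist where you look for it, and deferring the "careful enumeration" to \cite{ADGH18} is in any case uncomfortably close to citing the very result being proved.

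The repair is not difficult, and your gluing observation makes it available: check the rank at $(w_0)_\bullet$ in the chart $w_0U^-$ instead. Every $(i,j)\in\hc{w_0}$ satisfies $i<j$ (since $i>h(j)\ge j+1$ forces $w_0(i)<w_0(j)$ after relabeling), so $\hc{w_0}$ lies strictly above the diagonal and cannot contain a lower diagonal full-string; the linear terms $-z_{i,j-1}+z_{i+1,j}$ of the generators (Proposition~\ref{prop: linear term}, whose derivation does not presuppose radicality) are then genuinely triangular and linearly independent, giving full Jacobian rank at $(w_0)_\bullet$ and hence generic reducedness. One further small caveat: your Step 2 assumes $w_\bullet\in\Hess(N,h)$ to guarantee $\Hess(N,h)\cap wU^-\neq\emptyset$, whereas the proposition as stated fixes an arbitrary $w$; you should either note that only such $w$ are needed (as the paper does via \eqref{eq: open cover of Hess}) or handle the possibly empty case separately.
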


We will apply the following result to determine which points in $\Hess(N,h)$ are singular and which are nonsingular. See e.g.\ \cite[Chapter~I, Section~ 5]{Har77}.

\begin{theorem}[Jacobian Criterion]\label{theorem:Jacobian_Criterion'}
Let $Y\subseteq \C^m$ be an affine variety, and let $f_1,\ldots,f_k$ be a set of generators for the ideal of $Y$. Then a point $p\in Y$ is a singular point of $Y$ if and only if $$\rank \left(\frac{\partial f_i}{\partial x_j}(p)\right)_{1\le i\le k, \ 1\le j\le m}<m-r,$$
where $r$ is the dimension of $Y$, and $x_1,\ldots,x_m$ are the standard coordinate of $\C^m$.
\end{theorem}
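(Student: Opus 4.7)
The plan is to identify the rank of the Jacobian matrix $J(p)$ with the codimension of the Zariski tangent space $T_pY$ in $\C^m$, then to apply the standard fact that $p\in Y$ is nonsingular if and only if $\dim_\C T_pY = \dim Y = r$.

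First, I would set up the local algebraic picture. Let $\mathfrak{n}_p\subset\C[x_1,\ldots,x_m]$ be the maximal ideal corresponding to $p$; then $\mathfrak{n}_p/\mathfrak{n}_p^2$ is an $m$-dimensional $\C$-vector space with basis given by the classes of $x_j-p_j$. Writing $\mathfrak{m}_p$ for the maximal ideal of the local ring $\mathcal{O}_{Y,p}$, and letting $I=(f_1,\ldots,f_k)$ denote the ideal of $Y$, the standard short exact sequence
\begin{equation*}
0 \longrightarrow \frac{I+\mathfrak{n}_p^2}{\mathfrak{n}_p^2} \longrightarrow \frac{\mathfrak{n}_p}{\mathfrak{n}_p^2} \longrightarrow \frac{\mathfrak{m}_p}{\mathfrak{m}_p^2} \longrightarrow 0
\end{equation*}
shows that $\dim_\C \mathfrak{m}_p/\mathfrak{m}_p^2 = m - \dim_\C (I+\mathfrak{n}_p^2)/\mathfrak{n}_p^2$.

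Second, I would compute the image of $I$ in $\mathfrak{n}_p/\mathfrak{n}_p^2$. By Taylor expansion at $p$,
\begin{equation*}
f_i \equiv \sum_{j=1}^m \frac{\partial f_i}{\partial x_j}(p)\,(x_j - p_j) \pmod{\mathfrak{n}_p^2},
\end{equation*}
so the image of $I$ in $\mathfrak{n}_p/\mathfrak{n}_p^2 \cong \C^m$ coincides precisely with the row span of the Jacobian $J(p)$. Consequently $\dim_\C T_pY = \dim_\C \mathfrak{m}_p/\mathfrak{m}_p^2 = m - \rank J(p)$.

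Finally, I would invoke the regular-local-ring criterion from commutative algebra: $p$ is a nonsingular point of $Y$ iff $\mathcal{O}_{Y,p}$ is a regular local ring iff $\dim_\C \mathfrak{m}_p/\mathfrak{m}_p^2 = \dim \mathcal{O}_{Y,p}$. For an irreducible affine variety $Y$ of dimension $r$ we have $\dim \mathcal{O}_{Y,p} = r$ for every point $p\in Y$, so $p$ is singular iff $m - \rank J(p) > r$, equivalently iff $\rank J(p) < m-r$. The main step—and the only one that uses something beyond elementary linear algebra—is the Taylor-expansion identification, which crucially requires that $f_1,\ldots,f_k$ generate the full ideal of $Y$ rather than merely cutting out $Y$ set-theoretically; this is why the hypothesis is stated with ``a set of generators for the ideal.''
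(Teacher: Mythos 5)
Your proof is correct and is essentially the standard argument: the paper does not prove this statement itself but cites it to Hartshorne, Chapter~I, Section~5, and the proof given there is exactly your route (Taylor expansion identifies the row span of the Jacobian with the image of $I$ in $\mathfrak{n}_p/\mathfrak{n}_p^2$, so $\dim_\C T_pY=m-\rank J(p)$, and one concludes by the regular local ring criterion together with $\dim\mathcal{O}_{Y,p}=r$ for an irreducible variety). Your closing remark that the hypothesis requires the $f_i$ to generate the full (radical) ideal, not merely to cut out $Y$ set-theoretically, is exactly the point that matters for the paper's application, since the generators $g_{i,j}$ of $J_{w,h}$ are known to generate a radical ideal by the cited result of Abe--Dedieu--Galetto--Harada.
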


To apply this criterion to the subvariety $\Hess(N,h)\cap wU^-$ of $wU^-\cong \C^{n(n-1)/2}$, we compute the generators $f_{i,j}$ in terms of genuine variables $x_{i,j}$ in $\C[{\bf x}_w]$. For this purpose, we first focus on the components of $M^{-1}$.
Let us write $$y_{i,j}\coloneqq (M^{-1})_{i,j} \qquad (i,j\in[n]).$$ Then we have
\begin{align*}
(M^{-1}M)_{i,j} 
&= \sum_{\ell=1}^n (M^{-1})_{i,w(\ell)} M_{w(\ell),j}  \\
&= \sum_{\ell=1}^n y_{i,w(\ell)} x_{w(\ell),j} \\
&= y_{i,w(j)}+\sum_{\ell=j+1}^n y_{i,w(\ell)} x_{w(\ell),j} \quad (\text{by \eqref{eq: convention of xij}}).
\end{align*}
Thus, the condition $(M^{-1}M)_{i,j}=\delta_{i,j}$ is equivalent to 
\begin{align}\label{eq: first condition on y}
y_{i,w(j)}+\sum_{\ell=j+1}^n y_{i,w(\ell)} x_{w(\ell),j}=\delta_{i,j}.
\end{align}
From this, we see that
\begin{equation}\label{eq: convention of yij}
\begin{split}
&y_{i,w(i)}=1 \quad (i\in[n]), \\
&y_{i, w(j)}=0 \quad (i<j) 
\end{split}
\end{equation}
by an inductive argument with the base case $j=n$.
Hence, we obtain from \eqref{eq: first condition on y} and \eqref{eq: convention of yij} that
\begin{align}\label{eq: second condition on y}
y_{i,w(j)}+\sum_{\ell=j+1}^{i-1} y_{i,w(\ell)} x_{w(\ell),j}+x_{w(i),j}=0
\qquad (i>j),
\end{align}
where we note that the condition $i>j$ ensures that the term $\ell=i$ does appear in the summation in the left hand side of \eqref{eq: first condition on y}.
This enables us to compute $y_{i,w(j)}$ inductively in terms of genuine variables. We will use the equality \eqref{eq: second condition on y} in the next section to give a combinatorial formula for $y_{i,w(j)}$.

By definition, we have
\begin{align}\notag
f_{i,j} 
&= (M^{-1}NM)_{i,j} \\ \notag
&= \sum_{1\leq k \leq n} \sum_{1\leq \ell \leq n} (M^{-1})_{i,w(k)} N_{w(k),w(\ell)} M_{w(\ell),j} \\ \label{eq: formula of fij}
&= \sum_{1\leq k \leq i } \ \sum_{j\leq \ell \leq n} y_{i,w(k)} \delta_{w(k)+1,w(\ell)} x_{w(\ell),j} ,
\end{align}
where we used \eqref{eq: convention of xij} and \eqref{eq: convention of yij} at the third equality. We make the following change of indexing of the variables and the generators.

\begin{definition}\label{def: zij and gij}
For $i,j\in[n]$, we set
\begin{align*}
  &z_{i,j} \coloneqq  x_{i,w^{-1}(j)}\in \C[{\bf x}_w], \\
  &g_{i,j} \coloneqq  f_{w^{-1}(i),w^{-1}(j)}\in \C[{\bf x}_w],
\end{align*}
where $w\in \Sn$ is the fixed permutation as mentioned in the beginning of this section.
\end{definition}
 
This change of indexing will greatly simplify our proofs involving the Jacobian matrices of the generators of the defining ideals $J_{w,h}\subseteq \C[{\bf x}_w]$ of $\Hess(N,h)\cap wU^-$ in $wU^-$ as we will see in the next section. In particular, we will show that they have a particularly nice upper-triangular form when evaluated at any point in a Hessenberg-Schubert cell.
Recalling that the genuine variables in $\C[{\bf x}_w]$ are defined in Definition~\ref{def: genuine variable}, the following claim is immediate by \eqref{eq: convention of xij} and \eqref{eq: def of Hessenberg ideal}.

\begin{lemma}\label{lem: genuine zij}
The genuine variables in $\C[{\bf x}_w]$ are precisely $z_{i,j}$ for $w^{-1}(i)>w^{-1}(j)$. In particular, we have
\begin{equation}\label{eq: convention of zij}
\begin{split}
&z_{j,j}=1 \quad (j\in[n]), \\
&z_{i,j}=0 \quad (w^{-1}(i)<w^{-1}(j))
\end{split}
\end{equation}
in $\C[{\bf x}_w]$. Also, we have
\begin{align*}
J_{w,h} = ( \ g_{i, j} \mid w^{-1}(i)>h(w^{-1}(j)) \ ) \quad \text{in\ } \C[{\bf x}_w].
\end{align*}
\end{lemma}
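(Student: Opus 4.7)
The lemma is essentially a bookkeeping statement: each of the three assertions follows directly from unwinding the change of indexing in Definition~\ref{def: zij and gij} together with the already-established relations. I would proceed as follows.

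First, I would verify the characterization of genuine variables. By Definition~\ref{def: genuine variable}, $x_{a,b}\in \C[{\bf x}_w]$ is genuine iff $w^{-1}(a)>b$. Specializing to $a=i$ and $b=w^{-1}(j)$, and using $z_{i,j} = x_{i,w^{-1}(j)}$, this translates immediately to the condition $w^{-1}(i)>w^{-1}(j)$, giving the first assertion.

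Next, the two identities in \eqref{eq: convention of zij} are obtained by rewriting the defining relations \eqref{eq: convention of xij} in the new indexing. Setting $k\coloneqq w^{-1}(j)$, we have $z_{j,j}=x_{j,k}=x_{w(k),k}=1$ by the first line of \eqref{eq: convention of xij}. For the second identity, setting also $\ell\coloneqq w^{-1}(i)$ gives $z_{i,j}=x_{i,k}=x_{w(\ell),k}$, and the second line of \eqref{eq: convention of xij} makes this vanish precisely when $\ell<k$, i.e.\ $w^{-1}(i)<w^{-1}(j)$.

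Finally, the claim about $J_{w,h}$ is a reindexing of the generating set in \eqref{eq: def of Hessenberg ideal}. Since $w\in\Sn$ is a bijection on $[n]$, the substitution $(i,j)\leftrightarrow (w(i),w(j))$ is a bijection on pairs, and under the substitution $g_{i',j'}=f_{w^{-1}(i'),w^{-1}(j')}$ of Definition~\ref{def: zij and gij}, the generator $f_{i,j}$ equals $g_{w(i),w(j)}$. The condition $i>h(j)$ thus becomes $w^{-1}(i')>h(w^{-1}(j'))$ on the pair $(i',j')=(w(i),w(j))$, which proves the desired equality of ideals. Since every step is a direct substitution, there is no real obstacle.
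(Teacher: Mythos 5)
Your proof is correct and matches the paper's intent exactly: the paper simply declares the lemma ``immediate'' from Definition~\ref{def: genuine variable}, \eqref{eq: convention of xij}, and \eqref{eq: def of Hessenberg ideal}, and your write-up just spells out the same direct substitutions via the reindexing $z_{i,j}=x_{i,w^{-1}(j)}$ and $g_{i,j}=f_{w^{-1}(i),w^{-1}(j)}$. No issues.
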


\vspace{5pt}

\begin{example}
We saw in Example~\ref{ex: genuine variables} the set of genuine variables in $\C[{\bf x}_w]$ for $w=3142$. In terms of the new variables $z_{i,j}$, the genuine variables are $z_{1,3}$, $z_{2,3}$, $z_{2,1}$, $z_{2,4}$, $z_{4,3}$, $z_{4,1}$.
For these $z_{i,j}$, one can see that the index $(i,j)$ appears in the one-line notation $w=3142$ in the reversed order since it simply means that the condition $w^{-1}(i)>w^{-1}(j)$ holds.
\end{example}

\vspace{10pt}

\subsection{Properties of the generators of $J_{w,h}$}\label{subsec: properties of gij}
In the previous subsection, we fixed a permutation $w_{\bullet} \in \Sn$ to discuss the affine variety $\Hess(N,h)\cap wU^-$ in $wU^-$ and its defining ideal $J_{w,h}\subseteq \C[{\bf x}_w]$. 
By \eqref{eq: formula of fij} and Definition~\ref{def: zij and gij}, we have
\begin{align}\label{eq: gij in z}
g_{i,j} 
&= \sum_{1\leq w^{-1}(k) \leq w^{-1}(i) } \ \sum_{w^{-1}(j)\leq w^{-1}(\ell) \leq n} y_{w^{-1}(i),k} \delta_{k+1,\ell} z_{\ell,j}. 
\end{align}
To better understand the structure of the generators $g_{i,j}$, we first study 
\begin{align*}
y_{w^{-1}(i),k} \qquad (w^{-1}(i) \geq w^{-1}(k))
\end{align*}
appearing in this formula. 
When $w^{-1}(i)= w^{-1}(k)$, we have $i=k$ and hence $y_{w^{-1}(i), k}=1$ by \eqref{eq: convention of yij}. Hence, we consider the case $w^{-1}(i) > w^{-1}(k)$ in what follows.
In this case, 
we have from \eqref{eq: second condition on y} that
\begin{align}\label{eq: second condition on y in z}
y_{w^{-1}(i),k}+\sum_{w^{-1}(\ell)=w^{-1}(k)+1}^{w^{-1}(i)-1} y_{w^{-1}(i),\ell} z_{\ell,k}\ +z_{i,k}=0\qquad (w^{-1}(i) > w^{-1}(k)).
\end{align}
The next claim describes the explicit combinatorial structure of the summands appearing in $y_{w^{-1}(i),k}$.

\begin{proposition} \label{prop:summand}
For $w^{-1}(i) > w^{-1}(k)$, we have 
\begin{align}\label{eq: formula for yij}
 y_{w^{-1}(i), k} 
= \sum_{d=1}^{D}  \sum_{\bm{a}} 
(-1)^d z_{i,a_1}z_{a_1,a_2}\cdots z_{a_{d-1},k} ,
\end{align}
where $D=w^{-1}(i)-w^{-1}(k)$ is the degree of $y_{w^{-1}(i), k}$, and the second summation runs over all sub-sequences 
$\bm{a}=(k,a_{d-1},\ldots,a_2,a_1,i)$ 
of the one-line notation of $w$ whose head and tail are $k$ and $i$, respectively: 
\begin{align*}
w = 
 \ \cdots \ \ k \ \ \cdots \ \ a_{d-1} \ \ \cdots\cdots \ \ a_2 \ \ \cdots \ \ a_1 \ \ \cdots \ \ i \ \ \cdots \ .
\end{align*}
\end{proposition}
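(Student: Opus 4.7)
The plan is to prove the formula by induction on the integer $D = w^{-1}(i) - w^{-1}(k) \ge 1$, using the recurrence \eqref{eq: second condition on y in z} as the engine.

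For the base case $D=1$, the sum over $\ell$ in \eqref{eq: second condition on y in z} is empty, so $y_{w^{-1}(i),k} = -z_{i,k}$. On the formula side, only $d=1$ can occur (since there is no room for intermediate indices $a_1,\dots,a_{d-1}$ strictly between $k$ and $i$ in the one-line notation of $w$), and the unique sub-sequence is $(k,i)$, yielding $(-1)^1 z_{i,k} = -z_{i,k}$, as required.

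For the inductive step, suppose the formula is known whenever the difference $w^{-1}(\cdot)-w^{-1}(\cdot)$ is strictly less than $D$. Solving \eqref{eq: second condition on y in z} for $y_{w^{-1}(i),k}$ gives
\[
y_{w^{-1}(i),k} \;=\; -z_{i,k} \;-\; \sum_{\ell \,:\, w^{-1}(k)<w^{-1}(\ell)<w^{-1}(i)} y_{w^{-1}(i),\ell}\, z_{\ell,k}.
\]
For each such $\ell$, the induction hypothesis applies to $y_{w^{-1}(i),\ell}$ with degree $D' = w^{-1}(i)-w^{-1}(\ell) < D$, expressing it as a signed sum over sub-sequences $(\ell, b_{D'-1}, \ldots, b_1, i)$ of the one-line notation of $w$. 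Multiplying by $z_{\ell,k}$ and the outer minus sign converts a term $(-1)^{d'} z_{i,b_1}\cdots z_{b_{d'-1},\ell}$ into $(-1)^{d'+1} z_{i,b_1}\cdots z_{b_{d'-1},\ell} z_{\ell,k}$, which matches exactly the formula's contribution for the sub-sequence $(k,\ell, b_{d'-1},\ldots,b_1,i)$ of length $d = d'+1$.

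The main (essentially combinatorial) step is to verify the following bijection: every sub-sequence $(k, a_{d-1}, \ldots, a_1, i)$ of the one-line notation of $w$ with $d\ge 2$ arises uniquely by prepending $k$ to a sub-sequence $(a_{d-1}, a_{d-2},\ldots,a_1,i)$ of the one-line notation with head $a_{d-1}$ and tail $i$. This is immediate since the condition for $(k,a_{d-1},\dots,a_1,i)$ to be a sub-sequence of $w$ is $w^{-1}(k)<w^{-1}(a_{d-1})<\cdots <w^{-1}(a_1)<w^{-1}(i)$, and the role of $\ell=a_{d-1}$ in the recurrence precisely corresponds to the index satisfying $w^{-1}(k)<w^{-1}(\ell)<w^{-1}(i)$. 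Combining the $d=1$ contribution $-z_{i,k}$ with the $d\ge 2$ contributions packaged by the induction hypothesis gives the desired identity.

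I do not expect a serious obstacle: the recurrence \eqref{eq: second condition on y in z} is linear and the bijection above is straightforward. The only point demanding care is bookkeeping of the sign $(-1)^d$ across the degree shift $d = d'+1$, and verifying that every term on both sides appears with the correct multiplicity (once), which the uniqueness of the prepended element $\ell = a_{d-1}$ guarantees.
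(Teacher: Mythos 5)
Your proposal is correct and follows essentially the same route as the paper: induction on $D=w^{-1}(i)-w^{-1}(k)$ driven by the recurrence \eqref{eq: second condition on y in z}, with the base case $D=1$ giving $-z_{i,k}$ and the inductive step matching each term $y_{w^{-1}(i),\ell}z_{\ell,k}$ to the sub-sequences whose second entry is $\ell$. Your explicit statement of the bijection and sign bookkeeping is a slightly more careful write-up of what the paper does implicitly, but it is the same argument.
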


\begin{proof}
We prove the claim by induction on $D=w^{-1}(i) - w^{-1}(k)>0$.
When $D=1$, we have $w^{-1}(i)=w^{-1}(k)+1$ which means that $i$ is the next number of $k$ in the one-line notation of $w$: 
\begin{align}\label{eq: polynomial exp of yij 10}
w =
\begin{matrix}
\ \cdots & k & \!\!\! i & \cdots\ 
\end{matrix}.
\end{align}
The condition $w^{-1}(i) = w^{-1}(k)+1$ implies that the equality \eqref{eq: second condition on y in z} is valid and that the summation for $w^{-1}(\ell)$ is vacuous in this case. Namely, we have
\begin{align*}
y_{w^{-1}(i),k}+z_{i,k}=0
\end{align*}
which implies that $y_{w^{-1}(i),k}=-z_{i,k}$. This clearly satisfies the claim, where the (unique) subsequence appearing in the summation is simply $(k,i)$ in \eqref{eq: polynomial exp of yij 10}.

We now assume by induction that the claim holds for the cases which satisfies $D=w^{-1}(i)-w^{-1}(k)<p$ for some positive integer $p$, and we prove the claim for $D=p+1$.
Since we are assuming $w^{-1}(i) > w^{-1}(k)$, we have from \eqref{eq: second condition on y in z} that
\begin{align}\label{eq: str of y ind}
y_{w^{-1}(i),k}+\sum_{w^{-1}(\ell)=w^{-1}(k)+1}^{w^{-1}(i)-1} y_{w^{-1}(i),\ell} z_{\ell,k}+z_{i,k}=0 .
\end{align}
To apply the inductive assumption to $y_{w^{-1}(i),\ell}$ in the second summand, we need to verify that
\begin{align*}
 w^{-1}(i) - w^{-1}(\ell) < D.
\end{align*}
This inequality easily follows from the range of $\ell$:
\begin{align*}
 w^{-1}(i) - w^{-1}(\ell) < \ w^{-1}(i) - w^{-1}(k) = D.
\end{align*}
Thus, by the inductive assumption, $y_{w^{-1}(i),\ell}$ appearing in \eqref{eq: str of y ind} is an alternating sum of monomials whose indexes are sub-sequences 
$(\ell,a_{d'-1},\ldots,a_2,a_1,i)$ 
of the one-line notation of $w$:
\begin{align*}
w = 
 \left( \hspace{40pt}\ \cdots \ \ \ell \ \ \cdots \ \ a_{d'-1} \ \ \cdots\cdots \ \ a_2 \ \ \cdots \ \ a_1 \ \ \cdots \ \ i \ \ \cdots \ \right)
\end{align*}
for all $1\le d'\le w^{-1}(i) - w^{-1}(\ell)$.
Also, the indexes of $z_{\ell,k}$ in the second summand of \eqref{eq: str of y ind} appear in the one-line notation of $w$ as
\begin{align*}
\hspace{-90pt}
w =
 \left( \ \cdots \ \ k \ \ \cdots \ \ \ell \ \ \cdots\cdots \hspace{100pt}\right)
\end{align*}
since $w^{-1}(k)<w^{-1}(\ell)$ in \eqref{eq: str of y ind}.
Thus, each product $y_{w^{-1}(i),\ell} z_{\ell,k}$ in the second summand of \eqref{eq: str of y ind} satisfies the desired property; it is an alternating sum of monomials whose indexes are sub-sequences $(k,\ell,a_{d'-1},\ldots,a_2,a_1,i)$ of the one-line notation of $w$ for all $1\le d'\le w^{-1}(i) - w^{-1}(\ell)$. 
Therefore, the second summand in \eqref{eq: str of y ind} in total is an alternating sum of monomials whose indexes are sub-sequences $(k,a_{d-1},\ldots,a_2,a_1,i)$ of the one-line notation of $w$ for all $2\le d\le w^{-1}(i) - w^{-1}(k)=D$. 
Combining this with the last summand $z_{i,k}$, we see that $y_{w^{-1}(i),k}$ satisfies the desired claim for the case $D=p+1$.
\end{proof}

\begin{example}\label{ex: yij}
Let $n=6$, and take the permutation $w=564321$ in one-line notation.
Take $i=2$ and $k=5$ so that $w^{-1}(i) > w^{-1}(k)$ holds.
The subsequences of the one-line notation of $w$ whose head and tail are $k=5$ and $i=2$ respectively are given by
\begin{align*}
 (5,6,4,3,2), \ (5,4,3,2), \ (5,6,3,2), \ (5,6,4,2), \ (5,3,2), \ (5,4,2), \ (5,6,2), \ (5,2).
\end{align*}
Thus, by reversing each sequence, we obtain
\begin{equation*}
\begin{split}
 y_{w^{-1}(2), 5} 
 &= z_{2,3}z_{3,4}z_{4,6}z_{6,5} - z_{2,3}z_{3,4}z_{4,5} + z_{2,3}z_{3,6}z_{6,5} \\
 &\hspace{60pt}- z_{2,4}z_{4,6}z_{6,5} + z_{2,3}z_{3,5} - z_{2,4}z_{4,5} + z_{2,6}z_{6,5} - z_{2,5}.
\end{split}
\end{equation*}
\end{example}

\vspace{10pt}
An observant reader may notice in Example~\ref{ex: yij} that within each summand of $y_{w^{-1}(i),k}$, the net difference of the indexes appearing in that summand is always $i-k$. 
For example, the net difference for the first summand of $y_{w^{-1}(2), 5}$ above is $(2-3)+(3-4)+(4-5)+(5-6)=2-5$.
This easily follows from the formula \eqref{eq: formula for yij}, and this observation for $y_{w^{-1}(i), k}$ implies the following property of the generators $g_{i,j}$.

\begin{corollary} \label{cor:net_index}
For $i,j\in[n]$, every monomial $z_{a_0,b_0}z_{a_1,b_1}\cdots z_{a_{k},b_{k}} $ appearing in the generator $g_{i,j}$ 
has the net difference of indexes of $i-j+1$, or in other words, $$ (a_0-b_0)+ (a_1-b_1) + \cdots + (a_k-b_k)   = i-j+1.$$  
\end{corollary}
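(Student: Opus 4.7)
The plan is to reduce the statement to a straightforward telescoping computation based on the formula \eqref{eq: gij in z} for $g_{i,j}$ and the combinatorial description of $y_{w^{-1}(i),k}$ established in Proposition~\ref{prop:summand}.

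First I would simplify \eqref{eq: gij in z} using the Kronecker delta $\delta_{k+1,\ell}$ to eliminate the inner sum, obtaining
\[
g_{i,j} = \sum_{k} y_{w^{-1}(i),k}\, z_{k+1,j},
\]
where the sum ranges over indices $k$ satisfying the required inequalities. It therefore suffices to show that every monomial in each product $y_{w^{-1}(i),k}\, z_{k+1,j}$ has net index difference $i-j+1$.

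Next I would analyze $y_{w^{-1}(i),k}$ in the two cases. When $w^{-1}(i)=w^{-1}(k)$, we have $i=k$ and $y_{w^{-1}(i),k}=1$ by \eqref{eq: convention of yij}, so the net difference contribution is $0 = i-k$. When $w^{-1}(i)>w^{-1}(k)$, Proposition~\ref{prop:summand} expresses $y_{w^{-1}(i),k}$ as an alternating sum of monomials of the form $z_{i,a_1}z_{a_1,a_2}\cdots z_{a_{d-1},k}$, whose indexes form a telescoping chain with net difference
\[
(i-a_1)+(a_1-a_2)+\cdots+(a_{d-1}-k) = i-k.
\]
Thus in either case, every monomial in $y_{w^{-1}(i),k}$ contributes net difference $i-k$.

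Finally, multiplying by $z_{k+1,j}$ adds $(k+1)-j$ to this total, producing
\[
(i-k) + (k+1-j) = i-j+1,
\]
which is independent of the intermediate index $k$. Summing over $k$ preserves this property, so every monomial of $g_{i,j}$ has net index difference $i-j+1$, as required. No serious obstacle is expected: the argument is pure bookkeeping once Proposition~\ref{prop:summand} is available, and the delta collapses the double sum in \eqref{eq: gij in z} to a single family whose two factors contribute complementary telescoping pieces.
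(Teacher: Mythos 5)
Your proposal is correct and follows essentially the same route as the paper: collapse the Kronecker delta in \eqref{eq: gij in z} to a single sum of terms $y_{w^{-1}(i),k}\,z_{k+1,j}$, use Proposition~\ref{prop:summand} to see that every monomial of $y_{w^{-1}(i),k}$ telescopes to net difference $i-k$, and add the contribution $(k+1)-j$ from the final factor. The case split ($i=k$ versus $w^{-1}(i)>w^{-1}(k)$) matches the paper's treatment as well.
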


\begin{proof}  
We know from \eqref{eq: gij in z} that $g_{i,j}$ is given by 
\begin{align}\notag
g_{i,j} 
&= \sum_{1\leq w^{-1}(k) \leq w^{-1}(i) } \ \sum_{w^{-1}(j)\leq w^{-1}(\ell) \leq n} y_{w^{-1}(i),k} \delta_{k+1,\ell} z_{\ell,j} \\ \label{eq: gij in z 2}
&= \sum_{ \substack{1\leq w^{-1}(\ell-1) \leq w^{-1}(i) \\ w^{-1}(j)\leq w^{-1}(\ell) \leq n}} \  y_{w^{-1}(i),\ell-1} z_{\ell,j} ,
\end{align}
where we mean $y_{w^{-1}(i),0}=0$ by convention.
We may suppose that the summation is not vacuous since otherwise $g_{i,j}$ is zero. 
If there exists $\ell(\ge2)$ such that $w^{-1}(\ell-1) = w^{-1}(i)$, then we have $\ell=i+1$ so that the corresponding summand is $1\cdot z_{\ell,j}=z_{i+1,j}$, and it is obvious that this summands satisfies the claim.
If there exists $\ell (\ge2)$ such that $w^{-1}(\ell-1) < w^{-1}(i)$, then the corresponding summand is $y_{w^{-1}(i),\ell-1} z_{\ell,j}$, and \eqref{eq: formula for yij} implies that every monomial in $y_{w^{-1}(i),\ell-1}$ has the net difference of indexes $i-(\ell-1)$ as we observed above.
Hence, the net difference of indexes of the product $y_{w^{-1}(i),\ell-1}z_{\ell,j}$ is $i-(\ell-1)+\ell-j=i-j+1$.  Therefore,
every monomial appearing in $g_{i,j}$ 
has the net difference of indexes $i-j+1$.
\end{proof}

\begin{example}\label{ex:gij} 
To illustrate Corollary~\ref{cor:net_index}, we continue with the example of $w=564321$ with $i=2$ and $j=5$. The formula \eqref{eq: gij in z 2} implies that $g_{i,j}$ has a nontrivial term $y_{w^{-1}(i),\ell-1} z_{\ell,j}$ for each number $\ell$ in the one-line notation of $w$ where $\ell$ is weakly to the right of $j=5$ and $\ell-1$ is weakly to the left of $i=2$. One can confirm that $\ell=3,4,5,6$ are exactly such numbers. Hence, 
we have
\begin{align*} g_{2,5}  & =   ( y_{w^{-1}(2), 2} )z_{3,5} + (y_{w^{-1}(2),3} ) z_{4,5}  +(y_{w^{-1}(2),4}) z_{5,5} + ( y_{w^{-1}(2),5} ) z_{6,5} \\ 
& = (1)z_{3,5} +  (-z_{2,3}) z_{4,5} + (z_{2,3}z_{3,4} - z_{2,4}) 1  \\
 &\hspace{40pt}  + (z_{2,3}z_{3,4}z_{4,6}z_{6,5} - z_{2,3}z_{3,4}z_{4,5} - z_{2,3}z_{3,6}z_{6,5} - z_{2,4}z_{4,6}z_{6,5} \\
 &\hspace{160pt} + z_{2,3}z_{3,5} + z_{2,4}z_{4,5} + z_{2,6}z_{6,5} - z_{2,5}) z_{6,5}.
\end{align*}
One can see that every monomial appearing in $g_{2,5}$ has the net difference of indexes $2-5+1=-2.$
\end{example}

\begin{example}\label{eg: 312654-2}
Let $n=6$ and $w=312654$ in one-line notation, and 
take $i=6$ and $j=3$. Similarly, we have
\begin{align*}
 g_{6,3} 
 &= (y_{w^{-1}(6),1})z_{2,3} + (y_{w^{-1}(6),2})z_{3,3} + (y_{w^{-1}(6),3})z_{4,3} \\
 &= (z_{6,2}z_{2,1}-z_{6,1})z_{2,3} + (-z_{6,2})1+(-z_{6,2}z_{2,1}z_{1,3}+z_{6,2}z_{2,3}+z_{6,1}z_{1,3}-z_{6,3})z_{4,3},
\end{align*}
and every monomial appearing in $g_{6,3} $ has the net difference of indexes $6-3+1=4.$
\end{example} 

\vspace{10pt}

\subsection{Differentials of $g_{i,j}$ on Hessenberg-Schubert cells}
Recall that $\C[{\bf x}_w]$ is the coordinate ring of the affine open subset $wU^-\subset \Flags(\C^n)$.
Since the Schubert cell $\SC{w}$ is contained in $wU^-$, 
a function $F\in \C[{\bf x}_w]$ can be restricted on $\SC{w}$ which we denote by $F|_{\SC{w}}$.
Recall also from Section~\ref{subsec: local description} that we may identify
$\C[{\bf x}_w]$ as a polynomial ring over $\C$ feely generated by the genuine variables $x_{i,j}$, i.e.,\ the ones satisfying $w^{-1}(i)>j$.
From the description of the Schubert cell $\SC{w}$ in Section~\ref{sec: background}, we have 
\begin{align}\label{eq: vanishing on Cw}
 x_{i,j}|_{\SC{w}} =0 
 \qquad \text{for all genuine variables $x_{i,j}$ satisfying $i>w(j)$}
\end{align}
as is well-known.

\begin{example}\label{ex: genuine variables 2}
Let $n=4$ and $w= 3142$ in one-line notation as in Example~\ref{ex: genuine variables}, where we visualized the genuine variables of the ring $\C[{\bf x}_w]$ in the matrix form 
\begin{align*}
\begin{pmatrix}
 x_{11} & 1 & & \\
 x_{21} & x_{22} & x_{23} & 1 \\
 1 & & & \\
 x_{41} & x_{42} & 1 & 
\end{pmatrix}.
\end{align*}
The equality \eqref{eq: vanishing on Cw} means that $x_{41}|_{\SC{w}}=x_{22}|_{\SC{w}}=x_{42}|_{\SC{w}}=0$ for the genuine variables lying below the 1s
in this matrix.
\end{example} 

\vspace{10pt}

The following claim is an immediate consequence of \eqref{eq: vanishing on Cw}, where we recall that the genuine variables $z_{i,j}=x_{i,w^{-1}(j)}$ in $\C[{\bf x}_w]$ are characterized in Lemma~\ref{lem: genuine zij}.

\begin{lemma}\label{lem: HS condition}
$z_{i,j}|_{\SC{w}} =0$ for all genuine variables $z_{i,j}\in\C[{\bf x}_w]$ 
satisfying $i>j$.
\end{lemma}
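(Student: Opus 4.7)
The plan is to reduce directly to \eqref{eq: vanishing on Cw} by unwinding the definition $z_{i,j} = x_{i,w^{-1}(j)}$. First I would observe that by Lemma~\ref{lem: genuine zij}, a genuine variable $z_{i,j}$ is precisely one for which $w^{-1}(i) > w^{-1}(j)$, which is equivalent to saying that $x_{i,w^{-1}(j)}$ is a genuine variable in $\C[{\bf x}_w]$ in the sense of Definition~\ref{def: genuine variable} (taking the row index to be $i$ and the column index to be $w^{-1}(j)$, the condition is $w^{-1}(i) > w^{-1}(j)$).

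Next I would apply \eqref{eq: vanishing on Cw} to the genuine variable $x_{i,w^{-1}(j)}$: it vanishes on $\SC{w}$ whenever the row index exceeds $w$ applied to the column index, i.e., whenever
\[
i > w(w^{-1}(j)) = j.
\]
This is exactly the hypothesis $i > j$ given in the statement, so $z_{i,j}|_{\SC{w}} = x_{i,w^{-1}(j)}|_{\SC{w}} = 0$, completing the argument.

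There is no real obstacle here; the lemma is a bookkeeping translation between the two indexings of the coordinates on $wU^-$. The only thing worth being careful about is confirming that the change of variables $z_{i,j} = x_{i,w^{-1}(j)}$ sends the vanishing condition ``row index $>$ $w$(column index)'' used in \eqref{eq: vanishing on Cw} to the condition ``$i > j$'' in the new indexing, which is immediate once one writes $w(w^{-1}(j)) = j$.
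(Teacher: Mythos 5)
Your argument is correct and is exactly the translation the paper has in mind: the paper states the lemma as an immediate consequence of \eqref{eq: vanishing on Cw}, and your unwinding of $z_{i,j}=x_{i,w^{-1}(j)}$ together with $w(w^{-1}(j))=j$ is precisely the bookkeeping that makes it immediate. Nothing is missing.
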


\vspace{10pt}

\begin{proposition}\label{prop: a partial result 1}
We have 
\begin{align*}
 \left.\frac{\partial}{\partial z_{p,q}}g_{i,j}\ \right|_{\SC{w}} = 0 
\end{align*}
for all genuine variables $z_{p,q}$ satisfying $p-q\le i-j$,
where $|_{\SC{w}}$ denotes the restriction after the differentiation.
\end{proposition}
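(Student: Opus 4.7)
The plan is to prove this by a \emph{net-index-difference} counting argument, building directly on Corollary~\ref{cor:net_index} and Lemma~\ref{lem: HS condition}. The driving idea is that differentiating $g_{i,j}$ with respect to $z_{p,q}$ decreases the net difference of each monomial by $p-q$, whereas restricting to $\SC{w}$ kills every monomial containing a factor of strictly positive net difference. The hypothesis $p-q\le i-j$ will push the net difference of every term of $\partial g_{i,j}/\partial z_{p,q}$ strictly above $0$, forcing the restriction to $\SC{w}$ to vanish.

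First I would fix the bookkeeping. Corollary~\ref{cor:net_index} says that each monomial $z_{a_0,b_0}\cdots z_{a_k,b_k}$ appearing in the raw expansion of $g_{i,j}$ via \eqref{eq: gij in z 2} and \eqref{eq: formula for yij} satisfies $\sum_{\ell}(a_\ell-b_\ell)=i-j+1$. Applying the relations built into $\C[\mathbf{x}_w]$, namely $z_{a,a}=1$ and $z_{a,b}=0$ for non-genuine pairs (those with $w^{-1}(a)<w^{-1}(b)$), preserves this identity on the surviving monomials written in the genuine variables, because a removed diagonal factor contributes $0$ to the net difference. Next I would differentiate: for every surviving monomial that contains the genuine variable $z_{p,q}$, stripping off one copy of $z_{p,q}$ leaves a monomial of net difference $(i-j+1)-(p-q)$, which is at least $1$ under the hypothesis $p-q\le i-j$.

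Finally I would restrict to $\SC{w}$. By Lemma~\ref{lem: HS condition}, every genuine $z_{a,b}$ with $a>b$ vanishes on $\SC{w}$, so any monomial in genuine variables that survives the restriction consists only of factors with $a<b$ (each genuine), and each such factor contributes a strictly negative amount to the net difference. In particular, the net difference of every surviving monomial is $\le 0$, in direct contradiction with the lower bound $\ge 1$ from the previous step. Hence no term of $\partial g_{i,j}/\partial z_{p,q}$ survives the restriction, and the restricted polynomial is $0$. The main obstacle I anticipate is the transition between the raw symbolic expansion used in Corollary~\ref{cor:net_index} (which hides trivial factors $z_{a,a}=1$ and vanishing non-genuine factors) and the polynomial in genuine variables on which the differentiation is properly defined; once it is checked that both types of substitution respect the net-difference invariant, the remainder is a one-line strict inequality.
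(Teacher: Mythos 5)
Your proposal is correct and follows essentially the same route as the paper's proof: both use Corollary~\ref{cor:net_index} to see that the residual monomial $\M/z_{p,q}$ has net index difference $i-j+1-(p-q)\ge 1$, and then invoke Lemma~\ref{lem: HS condition} to kill every such term on $\SC{w}$ (the paper extracts a single surviving factor $z_{k,l}$ with $k>l$, while you phrase the same fact contrapositively via the sign of the net difference of a surviving monomial). The bookkeeping point you flag about diagonal and non-genuine factors is harmless, exactly as you say, since removed factors contribute $0$ to the net difference.
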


\begin{proof} 
Let $\M$ be a monomial appearing in $g_{i,j}$.
If $\M$ does not contain $z_{p,q}$, then the claim is obvious.
Suppose that $\M$ contains $z_{p,q}$. Then the residual part $\M/z_{p,q}$ of $\M$ has the net difference of indexes $i-j+1-(p-q)$ by Corollary~\ref{cor:net_index}. 
Since this integer is positive by the assumption, it follows that the residual part $\M/z_{p,q}$ (and $\M$) must contain some $z_{k,l}$ with $k > l$. 
This implies that the differential $\frac{\partial}{\partial z_{p,q}} \M$ is proportional to $z_{k,l}$ (even when $(k,\ell)=(p,q)$). 
Since we have $z_{k,l}|_{\SC{w}}=0$
by Lemma~\ref{lem: HS condition}, the claim follows.
\end{proof}

\vspace{5pt}

\begin{example}  \label{ex: vanishing derivative}
Let $n=6$, and we continue with the example of $w=564321$; we computed $g_{2,5}$ in Example~\ref{ex:gij}.
Take $p=2$ and $q=6$ so that $2-6 < 2-5$. 
From the computation in Example~\ref{ex:gij}, 
we obtain that
$$\left.\frac{\partial}{\partial z_{2,6}}g_{2,5} \ \right|_{\SC{w}}= \left.z_{65}^2\ \right|_{\SC{w}} = 0, $$ where the last equality follows from Lemma~\ref{lem: HS condition}.
\end{example}

\begin{example}\label{ex: vanishing derivative 2}
Let $n=6$, and we continue with the example of $w=312654$; 
we computed $g_{6,3}$ in Example~\ref{eg: 312654-2}.
Take $p=4$ and $q=1$ so that $4-1 = 6-3$. 
From the computation in Example~\ref{eg: 312654-2}, we see that
\begin{align*}
 \left.\frac{\partial}{\partial z_{4,1}}g_{6,3} \ \right|_{\SC{w}}
 = 0.
\end{align*}
\end{example} 

\vspace{10pt}

So far, the permutation $w\in\Sn$ was fixed arbitrary.
As we mentioned in the beginning of Section~\ref{subsec: local description}, 
it suffices to study $\Hess(N,h)\cap wU^-$ for each $w$ such that $w_{\bullet} \in \Hess(N,h)$ because of \eqref{eq: open cover of Hess}. We now focus on these cases.
Recall from Lemma~\ref{lem: genuine zij} that the defining ideal $J_{w,h}\subseteq \C[{\bf x}_w]$ of $\Hess(N,h)\cap wU^-$ in $wU^-$ is generated by $g_{i,j}$ for $w^{-1}(i)>h(w^{-1}(j))$.

\begin{lemma}\label{lem: permutation flags in Hess with i > h(j)}
Suppose that $w_{\bullet} \in \Hess(N,h)$ and $w^{-1}(i)>h(w^{-1}(j))$. Then we have 
\begin{align*}
w^{-1}(j) < w^{-1}(i+1).
\end{align*}
for $1\le i<n$
\end{lemma}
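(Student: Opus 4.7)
The plan is to combine the characterization of permutation flags in $\Hess(N,h)$ (Lemma~\ref{lem: permutation flags in Hess 2}) with the weak monotonicity of $h$. Since $w_{\bullet}\in\Hess(N,h)$, Lemma~\ref{lem: permutation flags in Hess 2} applied at the index $i$ (note $1\le i<n$ by hypothesis) gives $w^{-1}(i)\le h(w^{-1}(i+1))$.

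Now I would chain this with the hypothesis $w^{-1}(i)>h(w^{-1}(j))$ to obtain
\[
 h(w^{-1}(j)) \ <\ w^{-1}(i) \ \le\ h(w^{-1}(i+1)),
\]
which yields the strict inequality $h(w^{-1}(j)) < h(w^{-1}(i+1))$.

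Finally, since $h$ is weakly increasing, the contrapositive of monotonicity says a strict inequality in the values forces a strict inequality in the arguments: if we had $w^{-1}(j)\ge w^{-1}(i+1)$, then $h(w^{-1}(j))\ge h(w^{-1}(i+1))$, contradicting the previous display. Hence $w^{-1}(j)<w^{-1}(i+1)$, as desired.

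There is no real obstacle here; the statement is essentially a one-line consequence of Lemma~\ref{lem: permutation flags in Hess 2} together with monotonicity of $h$, and the only subtlety to double-check is that the hypothesis $1\le i<n$ ensures the index $i+1$ is in $[n]$ so that Lemma~\ref{lem: permutation flags in Hess 2} is applicable.
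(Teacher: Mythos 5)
Your proof is correct and uses exactly the same ingredients as the paper's: Lemma~\ref{lem: permutation flags in Hess 2} at index $i$ together with the weak monotonicity of $h$, the only cosmetic difference being that the paper phrases the monotonicity step as a proof by contradiction (assuming $w^{-1}(j)\ge w^{-1}(i+1)$ and deriving $w^{-1}(i)\le h(w^{-1}(j))$) while you argue directly via the contrapositive. No gaps.
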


\begin{proof}
If $w^{-1}(j) \ge w^{-1}(i+1)$, then we have $w^{-1}(i+1)=w^{-1}(j)-m$ for some $0\leq m<w^{-1}(j)$. 
Since we have $w_{\bullet}\in \Hess(N,h)$, this implies by Lemma~\ref{lem: permutation flags in Hess 2} that 
$$w^{-1}(i)
\leq h(w^{-1}(i+1))
= h(w^{-1}(j)-m)\leq h(w^{-1}(j))$$ 
which contradicts to the assumption $w^{-1}(i)>h(w^{-1}(j))$.
\end{proof}

\vspace{10pt}
 
\begin{proposition}\label{prop: linear term}
Suppose that $w_{\bullet} \in \Hess(N,h)$ and $w^{-1}(i) >h(w^{-1}(j) )$. The linear terms of $g_{i,j}$ are
\begin{align}\label{eq: linear term}
  - z_{i,j-1} + z_{i+1,j}
\end{align}
with the convention $z_{i,0}=z_{n+1,j}=0$.
Moreover, $z_{i,j-1}$ and $z_{i+1,j}$ in \eqref{eq: linear term} are genuine variables if $j-1\ne0$ and $i+1\ne n+1$, respectively.
\end{proposition}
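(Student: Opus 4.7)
The plan is to extract the degree-one part directly from the expansion \eqref{eq: gij in z 2},
$$g_{i,j}=\sum_{\substack{1\le w^{-1}(\ell-1)\le w^{-1}(i)\\ w^{-1}(j)\le w^{-1}(\ell)\le n}} y_{w^{-1}(i),\ell-1}\,z_{\ell,j},$$
and identify precisely which summands can contribute a linear monomial in the genuine variables. For each admissible $\ell$, the factor $z_{\ell,j}$ is either the constant $1$ (when $\ell=j$) or a genuine variable of degree one (when $w^{-1}(\ell)>w^{-1}(j)$); and the factor $y_{w^{-1}(i),\ell-1}$ equals $1$ when $\ell-1=i$ by \eqref{eq: convention of yij}, and otherwise is, by Proposition~\ref{prop:summand}, a polynomial every monomial of which has degree at least one, with the unique $d=1$ term equal to $-z_{i,\ell-1}$.

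From this it follows immediately that only two summands contribute linear terms: $\ell=i+1$ yields the linear monomial $1\cdot z_{i+1,j}$, while $\ell=j$ (with $j-1\ne i$) yields the linear part $-z_{i,j-1}$ of $y_{w^{-1}(i),j-1}\cdot 1$; every other admissible summand is a product of two factors of degree at least one and contributes only to the part of $g_{i,j}$ of degree $\ge 2$. The substantive step is to verify that these two values of $\ell$ really do lie in the summation range whenever the corresponding target $z_{i+1,j}$ or $z_{i,j-1}$ is genuine, and this is exactly where the hypothesis $w^{-1}(i)>h(w^{-1}(j))$ enters. For $\ell=i+1$, Lemma~\ref{lem: permutation flags in Hess with i > h(j)} gives $w^{-1}(j)<w^{-1}(i+1)$, so the range conditions hold as long as $i+1\le n$; and by Lemma~\ref{lem: genuine zij} the same inequality shows $z_{i+1,j}$ is a genuine variable, proving the ``moreover'' for the second term. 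For $\ell=j$ (when $j\ge 2$), applying Lemma~\ref{lem: permutation flags in Hess 2} at the index $j-1$ yields $w^{-1}(j-1)\le h(w^{-1}(j))$, which combined with the hypothesis gives $w^{-1}(j-1)\le h(w^{-1}(j))<w^{-1}(i)$; this simultaneously puts $\ell=j$ in the summation range and shows that $z_{i,j-1}$ is genuine.

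The degenerate cases $i=n$ and $j=1$ contribute nothing since $\ell=i+1$, respectively $\ell=j$, falls out of $\{2,\dots,n\}$, which matches precisely with the conventions $z_{n+1,j}=0$ and $z_{i,0}=0$. No single step is technically difficult; the only real care needed is a clean enumeration of the summands, making sure that higher-degree cross terms are not inadvertently counted as linear, and that the degree-one contribution of $y_{w^{-1}(i),\ell-1}$ for $\ell\ne j$ is correctly dismissed because the accompanying $z_{\ell,j}$ is already a degree-one factor.
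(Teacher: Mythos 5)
Your proposal is correct and follows essentially the same route as the paper: it isolates the summands $\ell=i+1$ and $\ell=j$ in the expansion \eqref{eq: gij in z 2}, uses Lemma~\ref{lem: permutation flags in Hess with i > h(j)} and Lemma~\ref{lem: permutation flags in Hess 2} together with the hypothesis $w^{-1}(i)>h(w^{-1}(j))$ to show these indices lie in the summation range and that the resulting variables are genuine, and appeals to Proposition~\ref{prop:summand} for the linear part of $y_{w^{-1}(i),j-1}$. The treatment of the degenerate cases $i=n$ and $j=1$ also matches the paper's.
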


\begin{proof}
Recall from \eqref{eq: gij in z 2} that we have
\begin{align}\label{eq: gij again}
g_{i,j} 
= \sum_{ \substack{1\leq w^{-1}(\ell-1) \leq w^{-1}(i) \\ w^{-1}(j)\leq w^{-1}(\ell) \leq n}} \  y_{w^{-1}(i),\ell-1} z_{\ell,j},
\end{align}
where we mean $y_{w^{-1}(i),0}=0$ by convention.
By \eqref{eq: convention of yij} and \eqref{eq: convention of zij}, the summands in \eqref{eq: gij again} which possibly contain a linear term of $g_{i,j}$ are the ones for $\ell=i+1$ or $\ell=j$, where we note that those summands might not appear in the summation because of the conditions on the running indexes $\ell$. We verify their appearance in what follows.

We first consider the summand for $\ell=i+1$.
When $i\ne n$, it appears in \eqref{eq: gij again} by Lemma~\ref{lem: permutation flags in Hess with i > h(j)}.
This summand is 
\begin{align*}
  y_{w^{-1}(i),\ell-1} z_{\ell,j} = 1\cdot z_{i+1,j} = z_{i+1,j}
\end{align*}
by \eqref{eq: convention of yij} which is one of the desired linear terms in \eqref{eq: linear term}. 
This $z_{i+1,j}$ is a genuine variable by Lemma~\ref{lem: genuine zij} and Lemma~\ref{lem: permutation flags in Hess with i > h(j)}.
When $i=n$, this summand does not appear in \eqref{eq: gij again}, and hence $g_{i,j}$ does not have the linear term corresponding to $z_{i+1,j}(=z_{n+1,j})$ in the claim of this proposition.

We next consider the summand for $\ell=j$.
It appears in \eqref{eq: gij again} since we have 
\begin{align*}
  w^{-1}(j-1)\le h(w^{-1}(j))<w^{-1}(i),
\end{align*}
where we used Lemma~\ref{lem: permutation flags in Hess 2} for the first inequality, and we used the assumption of this proposition for the second inequality. 
This summand is given by
\begin{align*}
  y_{w^{-1}(i),\ell-1} z_{\ell,j} = y_{w^{-1}(i),j-1} \cdot 1 = y_{w^{-1}(i),j-1}
\end{align*}
by \eqref{eq: convention of zij}. Since $w^{-1}(i) > w^{-1}(j-1)$ as we saw above, it follows from Proposition~\ref{prop:summand} that the linear terms of $y_{w^{-1}(i),j-1}$ is the other desired linear term $-z_{i,j-1}$ which is a genuine variable by Lemma~\ref{lem: genuine zij}.
When $j=1$, this summand does not appear in \eqref{eq: gij again} since $\ell-1=0$ in this case, and hence $g_{i,j}$ does not have the linear term corresponding to $-z_{i,j-1}(=z_{i,0})$ in the claim. This completes the proof.
\end{proof}

\vspace{10pt}

\begin{example}  \label{ex:linear term}
Let $n=6$ and $h=(3,4,4,5,6,6)$.
We continue with the example of $w=564321$.
One can verify that $w_{\bullet}\in\Hess(N,h)$ by Lemma~\ref{lem: permutation flags in Hess 2}.
We computed $g_{2,5}$ for this $w$ in Example~\ref{ex:gij}, and from that one can see that the linear terms of $g_{2,5}$ are precisely $-z_{24} +z_{35}.$
\end{example}

\begin{example} \label{ex:linear term 2}
Let $n=6$ and $h=(3,4,4,5,6,6)$.
We continue with the example of $w=312654$.
One can verify that $w_{\bullet}\in\Hess(N,h)$ by Lemma~\ref{lem: permutation flags in Hess 2}.
From the computation in Example~\ref{eg: 312654-2}, the linear term of $g_{6,3}$ is $-z_{6,2}$, where we note that $z_{7,2}(=0)$ does not appear in $g_{6,3}$ since $7=n+1$.
\end{example} 

\vspace{10pt}

Recall that $\C[{\bf x}_w]$ is the coordinate ring of the affine open subset $wU^- \subset \Flags(\C^n)$.
Since the Schubert cell $\SC{w}$ is contained in $wU^-$, we have
\begin{align*}
 \Hess(N,h)\cap \SC{w} \subseteq wU^- .
\end{align*}
This leads us to consider the following definition.

\begin{definition}\label{def: def of rest on HS}
For a function $F\in \C[{\bf x}_w]$, we denote by $F|_{\text{\rm HS cell}}$ the restriction of the function $F$ over the Hessenberg-Schubert cell $\Hess(N,h)\cap \SC{w}$.
\end{definition}

\begin{proposition}\label{prop: a partial result 2}
Suppose that $w_{\bullet} \in \Hess(N,h)$ and $w^{-1}(i) > h(w^{-1}(j))$. We have 
$$
 \left.\frac{\partial}{\partial z_{p,q}}g_{i,j}\ \right|_{\text{\rm HS cell}} = \begin{cases} - 1 & \text{ if } (p,q)=(i,j-1) \\ 1 & \text{ if } (p,q)=(i+1,j) \\ 0 &  \text{otherwise} \end{cases} 
$$
for all genuine variables $z_{p,q}$ satisfying $p-q = i-j+1 $,
where $|_{\text{\rm HS cell}}$ denotes the evaluation after differentiation.
\end{proposition}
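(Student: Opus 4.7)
The plan is to split $g_{i,j}$ into its linear part and its part of degree $\ge 2$, and to show that only the linear part contributes to the differential after restriction to the Hessenberg-Schubert cell. The hypothesis $p-q = i-j+1$ is precisely what makes the net-difference of indexes from Corollary~\ref{cor:net_index} cancel, and this cancellation is the key to the argument.

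First I will handle the linear part. By Proposition~\ref{prop: linear term}, the linear terms of $g_{i,j}$ are exactly $-z_{i,j-1} + z_{i+1,j}$ (with the convention that a term is dropped if $j-1=0$ or $i+1 = n+1$). Both indices $(i,j-1)$ and $(i+1,j)$ satisfy $p-q = i-j+1$, and both variables are genuine. Hence differentiating the linear part with respect to a genuine $z_{p,q}$ with $p-q = i-j+1$ yields exactly $-1$, $+1$, or $0$ according to whether $(p,q)$ equals $(i,j-1)$, $(i+1,j)$, or neither.

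Next I will handle the higher-degree part. Let $\M$ be any monomial of degree $d \ge 2$ appearing in $g_{i,j}$ that contains $z_{p,q}$. By Corollary~\ref{cor:net_index} the net difference of indexes of $\M$ is $i-j+1$, so after removing one factor of $z_{p,q}$ the residual monomial has net difference $(i-j+1) - (p-q) = 0$. The residual is a product of genuine variables $z_{a,b}$ of $\C[{\bf x}_w]$, and for every such genuine variable we have $a \ne b$ (otherwise $w^{-1}(a) = w^{-1}(b)$, contradicting $w^{-1}(a) > w^{-1}(b)$). If $d = 2$, the residual would be a single genuine $z_{a,b}$ with $a-b=0$, which is impossible, so no degree-$2$ monomial in $g_{i,j}$ can contain $z_{p,q}$. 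If $d \ge 3$, the residual has at least two factors whose nonzero index differences sum to $0$, so at least one factor $z_{a,b}$ must satisfy $a > b$. By Lemma~\ref{lem: HS condition} such a factor vanishes on $\SC{w}$, and a fortiori on the Hessenberg-Schubert cell $\Hess(N,h) \cap \SC{w}$. Consequently every monomial in $\partial g_{i,j}/\partial z_{p,q}$ coming from the degree $\ge 2$ part of $g_{i,j}$ restricts to zero on the HS cell.

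Combining the two contributions gives the stated formula. The technically delicate step is the case analysis on the degree $d$ of the monomial $\M$: once one observes that the residual's net difference is forced to be $0$, the fact that genuine variables satisfy $a \ne b$ immediately rules out $d = 2$ and forces a factor $z_{a,b}$ with $a > b$ when $d \ge 3$. I do not anticipate any serious obstacle beyond packaging this case analysis cleanly.
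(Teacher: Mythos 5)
Your proof is correct and takes essentially the same route as the paper's: reduce to the linear terms via Proposition~\ref{prop: linear term}, and kill every higher-degree monomial by noting (via Corollary~\ref{cor:net_index}) that its residual after removing $z_{p,q}$ has net index difference $0$, hence must contain a genuine factor $z_{a,b}$ with $a>b$, which vanishes on the cell by Lemma~\ref{lem: HS condition}. Your explicit split into $d=2$ (impossible) versus $d\ge 3$ is only a slightly more detailed packaging of the paper's observation that the nonempty residual, consisting of genuine variables with $a\ne b$ and zero total difference, must contain a strictly lower-triangular factor.
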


\begin{proof} 
By Proposition~\ref{prop: linear term}, it suffices to show that 
$\frac{\partial}{\partial z_{p,q}} \M\ |_{\text{\rm HS cell}} = 0$
for each monomial $\M$ appearing in $g_{i,j}$ with $\deg \M\ge2$.

Let $\M$ be a monomial appearing in $g_{i,j}$ such that $\deg \M\ge2$.
If $\M$ does not contain $z_{p,q}$, then we clearly have $$  \frac{\partial}{\partial z_{p,q}} \M  = 0 .$$   
If $\M$ contains $z_{p,q}$, then the residual part $\M/z_{p,q}$ has the net difference of indexes $i-j+1-(p-q) = 0$ by Corollary~\ref{cor:net_index}. 
Noticing that $\deg (\M/z_{p,q})\ge1$, the residual part $\M/z_{p,q}$ (and $\M$) must contain at least one variable of the form $z_{k,l}$ with $k > l$ since $z_{k,k}=1$ for all $k$ by Lemma~\ref{lem: genuine zij}. 
This implies that the differential $\frac{\partial}{\partial z_{p,q}} \M$ is proportional to $z_{k,l}$ (even when $(k,\ell)=(p,q)$). 
By Lemma~\ref{lem: HS condition} we see that $z_{k,l} =0$ on the Hessenberg-Schubert cell, and thus we obtain the desired equality
$$ \left. \frac{\partial}{\partial z_{p,q}} \M \  \right|_{\text{\rm HS cell}} = 0 .$$   
\end{proof}

\vspace{10pt}
 
\begin{example}  \label{ex: derivative pm1}
Let $n=6$ and $h=(3,4,4,5,6,6)$.
We continue with the example of $w=564321$.
For example, $z_{2,4}$, $z_{3,5}$, $z_{1,3}$ are genuine variables in $\C[{\bf x}_w]$, and we have from Example~\ref{ex:gij} that
\begin{align*} 
&\left.\frac{\partial}{\partial z_{2,4}}g_{2,5} \ \right|_{\text{\rm HS cell}}= \left.(-1-z_{46}z_{65}^2+z_{45}z_{65})\ \right|_{\text{\rm HS cell}} = -1, \\
&\left.\frac{\partial}{\partial z_{3,5}}g_{2,5} \ \right|_{\text{\rm HS cell}}= \left.(1+z_{2,3}z_{6,5})\ \right|_{\text{\rm HS cell}} = 1, \\
&\left.\frac{\partial}{\partial z_{1,3}}g_{2,5} \ \right|_{\text{\rm HS cell}}=  0
\end{align*}
by Lemma~\ref{lem: HS condition}.
\end{example}

\begin{example}  \label{ex: derivative pm2}
Let $n=6$ and $h=(3,4,4,5,6,6)$.
We continue with the example of $w=312654$.
For example, $z_{6,2}$ is a genuine variable in $\C[{\bf x}_w]$, and we have from Example~\ref{eg: 312654-2} that
\begin{align*}
\left.\frac{\partial}{\partial z_{6,2}}g_{6,3} \ \right|_{\text{\rm HS cell}}= \left.(z_{21}z_{23}-1-z_{21}z_{13}z_{43}+z_{23}z_{43})\ \right|_{\text{\rm HS cell}} = -1
\end{align*}
by Lemma~\ref{lem: HS condition}.
\end{example}

\subsection{The \HC}\label{subsec: hc}

Continuing from Section~\ref{subsec: properties of gij}, we keep fixing a permutation $w \in \Sn$ to study the generators $g_{i,j}$ of the defining ideal $J_{w,h}\subseteq \C[{\bf x}_w]$ of $\Hess(N,h)\cap wU^-$ in $wU^-$.
In Section~\ref{subsec: preview}, we defined the \HC , denoted by $\hc{w}$, as follows:
\begin{equation}\label{eq:r_h}
\begin{split}
 \hc{w}
 &\coloneqq\{(w(i),w(j))\in[n]\times[n]\mid  i< h(j)\} \\
 &=\{(i,j)\in[n]\times[n] \mid w^{-1}(i)>h(w^{-1}(j))\}. 
\end{split}
\end{equation}
As we can see from Lemma~\ref{lem: genuine zij}, this set encodes the indexes of the generators $g_{i,j}$ appearing in the defining ideal $J_{w,h}$. In other words, the index $(i,j)$ of a generator $g_{i,j}$ corresponds to the location of $(i,j)\in \hc{w}$ in the square grid $[n]\times[n]$.

\begin{example}\label{eg: locations of gij}
Let $n=6$, and $h=(3,4,4,5,6,6)$. 
The Hessenberg function $h$ corresponds to the following configuration of boxes in the $n\times n$ grid.\\
\[
{\unitlength 0.1in%
\begin{picture}(12.0000,12.0000)(34.0000,-16.0000)%
%
\special{pn 0}%
\special{sh 0.200}%
\special{pa 3400 400}%
\special{pa 3600 400}%
\special{pa 3600 600}%
\special{pa 3400 600}%
\special{pa 3400 400}%
\special{ip}%
\special{pn 8}%
\special{pa 3400 400}%
\special{pa 3600 400}%
\special{pa 3600 600}%
\special{pa 3400 600}%
\special{pa 3400 400}%
\special{pa 3600 400}%
\special{fp}%
%
\special{pn 0}%
\special{sh 0.200}%
\special{pa 3600 400}%
\special{pa 3800 400}%
\special{pa 3800 600}%
\special{pa 3600 600}%
\special{pa 3600 400}%
\special{ip}%
\special{pn 8}%
\special{pa 3600 400}%
\special{pa 3800 400}%
\special{pa 3800 600}%
\special{pa 3600 600}%
\special{pa 3600 400}%
\special{pa 3800 400}%
\special{fp}%
%
\special{pn 0}%
\special{sh 0.200}%
\special{pa 3800 400}%
\special{pa 4000 400}%
\special{pa 4000 600}%
\special{pa 3800 600}%
\special{pa 3800 400}%
\special{ip}%
\special{pn 8}%
\special{pa 3800 400}%
\special{pa 4000 400}%
\special{pa 4000 600}%
\special{pa 3800 600}%
\special{pa 3800 400}%
\special{pa 4000 400}%
\special{fp}%
%
\special{pn 0}%
\special{sh 0.200}%
\special{pa 4000 400}%
\special{pa 4200 400}%
\special{pa 4200 600}%
\special{pa 4000 600}%
\special{pa 4000 400}%
\special{ip}%
\special{pn 8}%
\special{pa 4000 400}%
\special{pa 4200 400}%
\special{pa 4200 600}%
\special{pa 4000 600}%
\special{pa 4000 400}%
\special{pa 4200 400}%
\special{fp}%
%
\special{pn 0}%
\special{sh 0.200}%
\special{pa 4200 400}%
\special{pa 4400 400}%
\special{pa 4400 600}%
\special{pa 4200 600}%
\special{pa 4200 400}%
\special{ip}%
\special{pn 8}%
\special{pa 4200 400}%
\special{pa 4400 400}%
\special{pa 4400 600}%
\special{pa 4200 600}%
\special{pa 4200 400}%
\special{pa 4400 400}%
\special{fp}%
%
\special{pn 0}%
\special{sh 0.200}%
\special{pa 4400 400}%
\special{pa 4600 400}%
\special{pa 4600 600}%
\special{pa 4400 600}%
\special{pa 4400 400}%
\special{ip}%
\special{pn 8}%
\special{pa 4400 400}%
\special{pa 4600 400}%
\special{pa 4600 600}%
\special{pa 4400 600}%
\special{pa 4400 400}%
\special{pa 4600 400}%
\special{fp}%
%
\special{pn 0}%
\special{sh 0.200}%
\special{pa 3400 600}%
\special{pa 3600 600}%
\special{pa 3600 800}%
\special{pa 3400 800}%
\special{pa 3400 600}%
\special{ip}%
\special{pn 8}%
\special{pa 3400 600}%
\special{pa 3600 600}%
\special{pa 3600 800}%
\special{pa 3400 800}%
\special{pa 3400 600}%
\special{pa 3600 600}%
\special{fp}%
%
\special{pn 0}%
\special{sh 0.200}%
\special{pa 3600 600}%
\special{pa 3800 600}%
\special{pa 3800 800}%
\special{pa 3600 800}%
\special{pa 3600 600}%
\special{ip}%
\special{pn 8}%
\special{pa 3600 600}%
\special{pa 3800 600}%
\special{pa 3800 800}%
\special{pa 3600 800}%
\special{pa 3600 600}%
\special{pa 3800 600}%
\special{fp}%
%
\special{pn 0}%
\special{sh 0.200}%
\special{pa 3800 600}%
\special{pa 4000 600}%
\special{pa 4000 800}%
\special{pa 3800 800}%
\special{pa 3800 600}%
\special{ip}%
\special{pn 8}%
\special{pa 3800 600}%
\special{pa 4000 600}%
\special{pa 4000 800}%
\special{pa 3800 800}%
\special{pa 3800 600}%
\special{pa 4000 600}%
\special{fp}%
%
\special{pn 0}%
\special{sh 0.200}%
\special{pa 4000 600}%
\special{pa 4200 600}%
\special{pa 4200 800}%
\special{pa 4000 800}%
\special{pa 4000 600}%
\special{ip}%
\special{pn 8}%
\special{pa 4000 600}%
\special{pa 4200 600}%
\special{pa 4200 800}%
\special{pa 4000 800}%
\special{pa 4000 600}%
\special{pa 4200 600}%
\special{fp}%
%
\special{pn 0}%
\special{sh 0.200}%
\special{pa 4200 600}%
\special{pa 4400 600}%
\special{pa 4400 800}%
\special{pa 4200 800}%
\special{pa 4200 600}%
\special{ip}%
\special{pn 8}%
\special{pa 4200 600}%
\special{pa 4400 600}%
\special{pa 4400 800}%
\special{pa 4200 800}%
\special{pa 4200 600}%
\special{pa 4400 600}%
\special{fp}%
%
\special{pn 0}%
\special{sh 0.200}%
\special{pa 4400 600}%
\special{pa 4600 600}%
\special{pa 4600 800}%
\special{pa 4400 800}%
\special{pa 4400 600}%
\special{ip}%
\special{pn 8}%
\special{pa 4400 600}%
\special{pa 4600 600}%
\special{pa 4600 800}%
\special{pa 4400 800}%
\special{pa 4400 600}%
\special{pa 4600 600}%
\special{fp}%
%
\special{pn 0}%
\special{sh 0.200}%
\special{pa 3400 800}%
\special{pa 3600 800}%
\special{pa 3600 1000}%
\special{pa 3400 1000}%
\special{pa 3400 800}%
\special{ip}%
\special{pn 8}%
\special{pa 3400 800}%
\special{pa 3600 800}%
\special{pa 3600 1000}%
\special{pa 3400 1000}%
\special{pa 3400 800}%
\special{pa 3600 800}%
\special{fp}%
%
\special{pn 0}%
\special{sh 0.200}%
\special{pa 3600 800}%
\special{pa 3800 800}%
\special{pa 3800 1000}%
\special{pa 3600 1000}%
\special{pa 3600 800}%
\special{ip}%
\special{pn 8}%
\special{pa 3600 800}%
\special{pa 3800 800}%
\special{pa 3800 1000}%
\special{pa 3600 1000}%
\special{pa 3600 800}%
\special{pa 3800 800}%
\special{fp}%
%
\special{pn 0}%
\special{sh 0.200}%
\special{pa 3800 800}%
\special{pa 4000 800}%
\special{pa 4000 1000}%
\special{pa 3800 1000}%
\special{pa 3800 800}%
\special{ip}%
\special{pn 8}%
\special{pa 3800 800}%
\special{pa 4000 800}%
\special{pa 4000 1000}%
\special{pa 3800 1000}%
\special{pa 3800 800}%
\special{pa 4000 800}%
\special{fp}%
%
\special{pn 0}%
\special{sh 0.200}%
\special{pa 4000 800}%
\special{pa 4200 800}%
\special{pa 4200 1000}%
\special{pa 4000 1000}%
\special{pa 4000 800}%
\special{ip}%
\special{pn 8}%
\special{pa 4000 800}%
\special{pa 4200 800}%
\special{pa 4200 1000}%
\special{pa 4000 1000}%
\special{pa 4000 800}%
\special{pa 4200 800}%
\special{fp}%
%
\special{pn 0}%
\special{sh 0.200}%
\special{pa 4200 800}%
\special{pa 4400 800}%
\special{pa 4400 1000}%
\special{pa 4200 1000}%
\special{pa 4200 800}%
\special{ip}%
\special{pn 8}%
\special{pa 4200 800}%
\special{pa 4400 800}%
\special{pa 4400 1000}%
\special{pa 4200 1000}%
\special{pa 4200 800}%
\special{pa 4400 800}%
\special{fp}%
%
\special{pn 0}%
\special{sh 0.200}%
\special{pa 4400 800}%
\special{pa 4600 800}%
\special{pa 4600 1000}%
\special{pa 4400 1000}%
\special{pa 4400 800}%
\special{ip}%
\special{pn 8}%
\special{pa 4400 800}%
\special{pa 4600 800}%
\special{pa 4600 1000}%
\special{pa 4400 1000}%
\special{pa 4400 800}%
\special{pa 4600 800}%
\special{fp}%
%
\special{pn 8}%
\special{pa 3400 1000}%
\special{pa 3600 1000}%
\special{pa 3600 1200}%
\special{pa 3400 1200}%
\special{pa 3400 1000}%
\special{pa 3600 1000}%
\special{fp}%
%
\special{pn 0}%
\special{sh 0.200}%
\special{pa 3600 1000}%
\special{pa 3800 1000}%
\special{pa 3800 1200}%
\special{pa 3600 1200}%
\special{pa 3600 1000}%
\special{ip}%
\special{pn 8}%
\special{pa 3600 1000}%
\special{pa 3800 1000}%
\special{pa 3800 1200}%
\special{pa 3600 1200}%
\special{pa 3600 1000}%
\special{pa 3800 1000}%
\special{fp}%
%
\special{pn 0}%
\special{sh 0.200}%
\special{pa 3800 1000}%
\special{pa 4000 1000}%
\special{pa 4000 1200}%
\special{pa 3800 1200}%
\special{pa 3800 1000}%
\special{ip}%
\special{pn 8}%
\special{pa 3800 1000}%
\special{pa 4000 1000}%
\special{pa 4000 1200}%
\special{pa 3800 1200}%
\special{pa 3800 1000}%
\special{pa 4000 1000}%
\special{fp}%
%
\special{pn 0}%
\special{sh 0.200}%
\special{pa 4000 1000}%
\special{pa 4200 1000}%
\special{pa 4200 1200}%
\special{pa 4000 1200}%
\special{pa 4000 1000}%
\special{ip}%
\special{pn 8}%
\special{pa 4000 1000}%
\special{pa 4200 1000}%
\special{pa 4200 1200}%
\special{pa 4000 1200}%
\special{pa 4000 1000}%
\special{pa 4200 1000}%
\special{fp}%
%
\special{pn 0}%
\special{sh 0.200}%
\special{pa 4200 1000}%
\special{pa 4400 1000}%
\special{pa 4400 1200}%
\special{pa 4200 1200}%
\special{pa 4200 1000}%
\special{ip}%
\special{pn 8}%
\special{pa 4200 1000}%
\special{pa 4400 1000}%
\special{pa 4400 1200}%
\special{pa 4200 1200}%
\special{pa 4200 1000}%
\special{pa 4400 1000}%
\special{fp}%
%
\special{pn 0}%
\special{sh 0.200}%
\special{pa 4400 1000}%
\special{pa 4600 1000}%
\special{pa 4600 1200}%
\special{pa 4400 1200}%
\special{pa 4400 1000}%
\special{ip}%
\special{pn 8}%
\special{pa 4400 1000}%
\special{pa 4600 1000}%
\special{pa 4600 1200}%
\special{pa 4400 1200}%
\special{pa 4400 1000}%
\special{pa 4600 1000}%
\special{fp}%
%
\special{pn 8}%
\special{pa 3400 1200}%
\special{pa 3600 1200}%
\special{pa 3600 1400}%
\special{pa 3400 1400}%
\special{pa 3400 1200}%
\special{pa 3600 1200}%
\special{fp}%
%
\special{pn 8}%
\special{pa 3600 1200}%
\special{pa 3800 1200}%
\special{pa 3800 1400}%
\special{pa 3600 1400}%
\special{pa 3600 1200}%
\special{pa 3800 1200}%
\special{fp}%
%
\special{pn 8}%
\special{pa 3800 1200}%
\special{pa 4000 1200}%
\special{pa 4000 1400}%
\special{pa 3800 1400}%
\special{pa 3800 1200}%
\special{pa 4000 1200}%
\special{fp}%
%
\special{pn 0}%
\special{sh 0.200}%
\special{pa 4000 1200}%
\special{pa 4200 1200}%
\special{pa 4200 1400}%
\special{pa 4000 1400}%
\special{pa 4000 1200}%
\special{ip}%
\special{pn 8}%
\special{pa 4000 1200}%
\special{pa 4200 1200}%
\special{pa 4200 1400}%
\special{pa 4000 1400}%
\special{pa 4000 1200}%
\special{pa 4200 1200}%
\special{fp}%
%
\special{pn 0}%
\special{sh 0.200}%
\special{pa 4200 1200}%
\special{pa 4400 1200}%
\special{pa 4400 1400}%
\special{pa 4200 1400}%
\special{pa 4200 1200}%
\special{ip}%
\special{pn 8}%
\special{pa 4200 1200}%
\special{pa 4400 1200}%
\special{pa 4400 1400}%
\special{pa 4200 1400}%
\special{pa 4200 1200}%
\special{pa 4400 1200}%
\special{fp}%
%
\special{pn 0}%
\special{sh 0.200}%
\special{pa 4400 1200}%
\special{pa 4600 1200}%
\special{pa 4600 1400}%
\special{pa 4400 1400}%
\special{pa 4400 1200}%
\special{ip}%
\special{pn 8}%
\special{pa 4400 1200}%
\special{pa 4600 1200}%
\special{pa 4600 1400}%
\special{pa 4400 1400}%
\special{pa 4400 1200}%
\special{pa 4600 1200}%
\special{fp}%
%
\special{pn 8}%
\special{pa 3400 1400}%
\special{pa 3600 1400}%
\special{pa 3600 1600}%
\special{pa 3400 1600}%
\special{pa 3400 1400}%
\special{pa 3600 1400}%
\special{fp}%
%
\special{pn 8}%
\special{pa 3600 1400}%
\special{pa 3800 1400}%
\special{pa 3800 1600}%
\special{pa 3600 1600}%
\special{pa 3600 1400}%
\special{pa 3800 1400}%
\special{fp}%
%
\special{pn 8}%
\special{pa 3800 1400}%
\special{pa 4000 1400}%
\special{pa 4000 1600}%
\special{pa 3800 1600}%
\special{pa 3800 1400}%
\special{pa 4000 1400}%
\special{fp}%
%
\special{pn 8}%
\special{pa 4000 1400}%
\special{pa 4200 1400}%
\special{pa 4200 1600}%
\special{pa 4000 1600}%
\special{pa 4000 1400}%
\special{pa 4200 1400}%
\special{fp}%
%
\special{pn 0}%
\special{sh 0.200}%
\special{pa 4200 1400}%
\special{pa 4400 1400}%
\special{pa 4400 1600}%
\special{pa 4200 1600}%
\special{pa 4200 1400}%
\special{ip}%
\special{pn 8}%
\special{pa 4200 1400}%
\special{pa 4400 1400}%
\special{pa 4400 1600}%
\special{pa 4200 1600}%
\special{pa 4200 1400}%
\special{pa 4400 1400}%
\special{fp}%
%
\special{pn 0}%
\special{sh 0.200}%
\special{pa 4400 1400}%
\special{pa 4600 1400}%
\special{pa 4600 1600}%
\special{pa 4400 1600}%
\special{pa 4400 1400}%
\special{ip}%
\special{pn 8}%
\special{pa 4400 1400}%
\special{pa 4600 1400}%
\special{pa 4600 1600}%
\special{pa 4400 1600}%
\special{pa 4400 1400}%
\special{pa 4600 1400}%
\special{fp}%
\end{picture}}%
\]
We now take $w=564321$ in one-line notation.
By permuting the rows and columns of the picture above by $w$, we obtain
\[
{\unitlength 0.1in%
\begin{picture}(12.0000,12.0000)(34.0000,-16.0000)%
%
\special{pn 0}%
\special{sh 0.200}%
\special{pa 3400 400}%
\special{pa 3600 400}%
\special{pa 3600 600}%
\special{pa 3400 600}%
\special{pa 3400 400}%
\special{ip}%
\special{pn 8}%
\special{pa 3400 400}%
\special{pa 3600 400}%
\special{pa 3600 600}%
\special{pa 3400 600}%
\special{pa 3400 400}%
\special{pa 3600 400}%
\special{fp}%
%
\special{pn 0}%
\special{sh 0.200}%
\special{pa 3600 400}%
\special{pa 3800 400}%
\special{pa 3800 600}%
\special{pa 3600 600}%
\special{pa 3600 400}%
\special{ip}%
\special{pn 8}%
\special{pa 3600 400}%
\special{pa 3800 400}%
\special{pa 3800 600}%
\special{pa 3600 600}%
\special{pa 3600 400}%
\special{pa 3800 400}%
\special{fp}%
%
\special{pn 8}%
\special{pa 3800 400}%
\special{pa 4000 400}%
\special{pa 4000 600}%
\special{pa 3800 600}%
\special{pa 3800 400}%
\special{pa 4000 400}%
\special{fp}%
%
\special{pn 8}%
\special{pa 4000 400}%
\special{pa 4200 400}%
\special{pa 4200 600}%
\special{pa 4000 600}%
\special{pa 4000 400}%
\special{pa 4200 400}%
\special{fp}%
%
\special{pn 8}%
\special{pa 4200 400}%
\special{pa 4400 400}%
\special{pa 4400 600}%
\special{pa 4200 600}%
\special{pa 4200 400}%
\special{pa 4400 400}%
\special{fp}%
%
\special{pn 8}%
\special{pa 4400 400}%
\special{pa 4600 400}%
\special{pa 4600 600}%
\special{pa 4400 600}%
\special{pa 4400 400}%
\special{pa 4600 400}%
\special{fp}%
%
\special{pn 0}%
\special{sh 0.200}%
\special{pa 3400 600}%
\special{pa 3600 600}%
\special{pa 3600 800}%
\special{pa 3400 800}%
\special{pa 3400 600}%
\special{ip}%
\special{pn 8}%
\special{pa 3400 600}%
\special{pa 3600 600}%
\special{pa 3600 800}%
\special{pa 3400 800}%
\special{pa 3400 600}%
\special{pa 3600 600}%
\special{fp}%
%
\special{pn 0}%
\special{sh 0.200}%
\special{pa 3600 600}%
\special{pa 3800 600}%
\special{pa 3800 800}%
\special{pa 3600 800}%
\special{pa 3600 600}%
\special{ip}%
\special{pn 8}%
\special{pa 3600 600}%
\special{pa 3800 600}%
\special{pa 3800 800}%
\special{pa 3600 800}%
\special{pa 3600 600}%
\special{pa 3800 600}%
\special{fp}%
%
\special{pn 0}%
\special{sh 0.200}%
\special{pa 3800 600}%
\special{pa 4000 600}%
\special{pa 4000 800}%
\special{pa 3800 800}%
\special{pa 3800 600}%
\special{ip}%
\special{pn 8}%
\special{pa 3800 600}%
\special{pa 4000 600}%
\special{pa 4000 800}%
\special{pa 3800 800}%
\special{pa 3800 600}%
\special{pa 4000 600}%
\special{fp}%
%
\special{pn 8}%
\special{pa 4000 600}%
\special{pa 4200 600}%
\special{pa 4200 800}%
\special{pa 4000 800}%
\special{pa 4000 600}%
\special{pa 4200 600}%
\special{fp}%
%
\special{pn 8}%
\special{pa 4200 600}%
\special{pa 4400 600}%
\special{pa 4400 800}%
\special{pa 4200 800}%
\special{pa 4200 600}%
\special{pa 4400 600}%
\special{fp}%
%
\special{pn 8}%
\special{pa 4400 600}%
\special{pa 4600 600}%
\special{pa 4600 800}%
\special{pa 4400 800}%
\special{pa 4400 600}%
\special{pa 4600 600}%
\special{fp}%
%
\special{pn 0}%
\special{sh 0.200}%
\special{pa 3400 800}%
\special{pa 3600 800}%
\special{pa 3600 1000}%
\special{pa 3400 1000}%
\special{pa 3400 800}%
\special{ip}%
\special{pn 8}%
\special{pa 3400 800}%
\special{pa 3600 800}%
\special{pa 3600 1000}%
\special{pa 3400 1000}%
\special{pa 3400 800}%
\special{pa 3600 800}%
\special{fp}%
%
\special{pn 0}%
\special{sh 0.200}%
\special{pa 3600 800}%
\special{pa 3800 800}%
\special{pa 3800 1000}%
\special{pa 3600 1000}%
\special{pa 3600 800}%
\special{ip}%
\special{pn 8}%
\special{pa 3600 800}%
\special{pa 3800 800}%
\special{pa 3800 1000}%
\special{pa 3600 1000}%
\special{pa 3600 800}%
\special{pa 3800 800}%
\special{fp}%
%
\special{pn 0}%
\special{sh 0.200}%
\special{pa 3800 800}%
\special{pa 4000 800}%
\special{pa 4000 1000}%
\special{pa 3800 1000}%
\special{pa 3800 800}%
\special{ip}%
\special{pn 8}%
\special{pa 3800 800}%
\special{pa 4000 800}%
\special{pa 4000 1000}%
\special{pa 3800 1000}%
\special{pa 3800 800}%
\special{pa 4000 800}%
\special{fp}%
%
\special{pn 0}%
\special{sh 0.200}%
\special{pa 4000 800}%
\special{pa 4200 800}%
\special{pa 4200 1000}%
\special{pa 4000 1000}%
\special{pa 4000 800}%
\special{ip}%
\special{pn 8}%
\special{pa 4000 800}%
\special{pa 4200 800}%
\special{pa 4200 1000}%
\special{pa 4000 1000}%
\special{pa 4000 800}%
\special{pa 4200 800}%
\special{fp}%
%
\special{pn 8}%
\special{pa 4200 800}%
\special{pa 4400 800}%
\special{pa 4400 1000}%
\special{pa 4200 1000}%
\special{pa 4200 800}%
\special{pa 4400 800}%
\special{fp}%
%
\special{pn 0}%
\special{sh 0.200}%
\special{pa 4400 800}%
\special{pa 4600 800}%
\special{pa 4600 1000}%
\special{pa 4400 1000}%
\special{pa 4400 800}%
\special{ip}%
\special{pn 8}%
\special{pa 4400 800}%
\special{pa 4600 800}%
\special{pa 4600 1000}%
\special{pa 4400 1000}%
\special{pa 4400 800}%
\special{pa 4600 800}%
\special{fp}%
%
\special{pn 0}%
\special{sh 0.200}%
\special{pa 3400 1000}%
\special{pa 3600 1000}%
\special{pa 3600 1200}%
\special{pa 3400 1200}%
\special{pa 3400 1000}%
\special{ip}%
\special{pn 8}%
\special{pa 3400 1000}%
\special{pa 3600 1000}%
\special{pa 3600 1200}%
\special{pa 3400 1200}%
\special{pa 3400 1000}%
\special{pa 3600 1000}%
\special{fp}%
%
\special{pn 0}%
\special{sh 0.200}%
\special{pa 3600 1000}%
\special{pa 3800 1000}%
\special{pa 3800 1200}%
\special{pa 3600 1200}%
\special{pa 3600 1000}%
\special{ip}%
\special{pn 8}%
\special{pa 3600 1000}%
\special{pa 3800 1000}%
\special{pa 3800 1200}%
\special{pa 3600 1200}%
\special{pa 3600 1000}%
\special{pa 3800 1000}%
\special{fp}%
%
\special{pn 0}%
\special{sh 0.200}%
\special{pa 3800 1000}%
\special{pa 4000 1000}%
\special{pa 4000 1200}%
\special{pa 3800 1200}%
\special{pa 3800 1000}%
\special{ip}%
\special{pn 8}%
\special{pa 3800 1000}%
\special{pa 4000 1000}%
\special{pa 4000 1200}%
\special{pa 3800 1200}%
\special{pa 3800 1000}%
\special{pa 4000 1000}%
\special{fp}%
%
\special{pn 0}%
\special{sh 0.200}%
\special{pa 4000 1000}%
\special{pa 4200 1000}%
\special{pa 4200 1200}%
\special{pa 4000 1200}%
\special{pa 4000 1000}%
\special{ip}%
\special{pn 8}%
\special{pa 4000 1000}%
\special{pa 4200 1000}%
\special{pa 4200 1200}%
\special{pa 4000 1200}%
\special{pa 4000 1000}%
\special{pa 4200 1000}%
\special{fp}%
%
\special{pn 0}%
\special{sh 0.200}%
\special{pa 4200 1000}%
\special{pa 4400 1000}%
\special{pa 4400 1200}%
\special{pa 4200 1200}%
\special{pa 4200 1000}%
\special{ip}%
\special{pn 8}%
\special{pa 4200 1000}%
\special{pa 4400 1000}%
\special{pa 4400 1200}%
\special{pa 4200 1200}%
\special{pa 4200 1000}%
\special{pa 4400 1000}%
\special{fp}%
%
\special{pn 0}%
\special{sh 0.200}%
\special{pa 4400 1000}%
\special{pa 4600 1000}%
\special{pa 4600 1200}%
\special{pa 4400 1200}%
\special{pa 4400 1000}%
\special{ip}%
\special{pn 8}%
\special{pa 4400 1000}%
\special{pa 4600 1000}%
\special{pa 4600 1200}%
\special{pa 4400 1200}%
\special{pa 4400 1000}%
\special{pa 4600 1000}%
\special{fp}%
%
\special{pn 0}%
\special{sh 0.200}%
\special{pa 3400 1200}%
\special{pa 3600 1200}%
\special{pa 3600 1400}%
\special{pa 3400 1400}%
\special{pa 3400 1200}%
\special{ip}%
\special{pn 8}%
\special{pa 3400 1200}%
\special{pa 3600 1200}%
\special{pa 3600 1400}%
\special{pa 3400 1400}%
\special{pa 3400 1200}%
\special{pa 3600 1200}%
\special{fp}%
%
\special{pn 0}%
\special{sh 0.200}%
\special{pa 3600 1200}%
\special{pa 3800 1200}%
\special{pa 3800 1400}%
\special{pa 3600 1400}%
\special{pa 3600 1200}%
\special{ip}%
\special{pn 8}%
\special{pa 3600 1200}%
\special{pa 3800 1200}%
\special{pa 3800 1400}%
\special{pa 3600 1400}%
\special{pa 3600 1200}%
\special{pa 3800 1200}%
\special{fp}%
%
\special{pn 0}%
\special{sh 0.200}%
\special{pa 3800 1200}%
\special{pa 4000 1200}%
\special{pa 4000 1400}%
\special{pa 3800 1400}%
\special{pa 3800 1200}%
\special{ip}%
\special{pn 8}%
\special{pa 3800 1200}%
\special{pa 4000 1200}%
\special{pa 4000 1400}%
\special{pa 3800 1400}%
\special{pa 3800 1200}%
\special{pa 4000 1200}%
\special{fp}%
%
\special{pn 0}%
\special{sh 0.200}%
\special{pa 4000 1200}%
\special{pa 4200 1200}%
\special{pa 4200 1400}%
\special{pa 4000 1400}%
\special{pa 4000 1200}%
\special{ip}%
\special{pn 8}%
\special{pa 4000 1200}%
\special{pa 4200 1200}%
\special{pa 4200 1400}%
\special{pa 4000 1400}%
\special{pa 4000 1200}%
\special{pa 4200 1200}%
\special{fp}%
%
\special{pn 0}%
\special{sh 0.200}%
\special{pa 4200 1200}%
\special{pa 4400 1200}%
\special{pa 4400 1400}%
\special{pa 4200 1400}%
\special{pa 4200 1200}%
\special{ip}%
\special{pn 8}%
\special{pa 4200 1200}%
\special{pa 4400 1200}%
\special{pa 4400 1400}%
\special{pa 4200 1400}%
\special{pa 4200 1200}%
\special{pa 4400 1200}%
\special{fp}%
%
\special{pn 0}%
\special{sh 0.200}%
\special{pa 4400 1200}%
\special{pa 4600 1200}%
\special{pa 4600 1400}%
\special{pa 4400 1400}%
\special{pa 4400 1200}%
\special{ip}%
\special{pn 8}%
\special{pa 4400 1200}%
\special{pa 4600 1200}%
\special{pa 4600 1400}%
\special{pa 4400 1400}%
\special{pa 4400 1200}%
\special{pa 4600 1200}%
\special{fp}%
%
\special{pn 0}%
\special{sh 0.200}%
\special{pa 3400 1400}%
\special{pa 3600 1400}%
\special{pa 3600 1600}%
\special{pa 3400 1600}%
\special{pa 3400 1400}%
\special{ip}%
\special{pn 8}%
\special{pa 3400 1400}%
\special{pa 3600 1400}%
\special{pa 3600 1600}%
\special{pa 3400 1600}%
\special{pa 3400 1400}%
\special{pa 3600 1400}%
\special{fp}%
%
\special{pn 0}%
\special{sh 0.200}%
\special{pa 3600 1400}%
\special{pa 3800 1400}%
\special{pa 3800 1600}%
\special{pa 3600 1600}%
\special{pa 3600 1400}%
\special{ip}%
\special{pn 8}%
\special{pa 3600 1400}%
\special{pa 3800 1400}%
\special{pa 3800 1600}%
\special{pa 3600 1600}%
\special{pa 3600 1400}%
\special{pa 3800 1400}%
\special{fp}%
%
\special{pn 0}%
\special{sh 0.200}%
\special{pa 3800 1400}%
\special{pa 4000 1400}%
\special{pa 4000 1600}%
\special{pa 3800 1600}%
\special{pa 3800 1400}%
\special{ip}%
\special{pn 8}%
\special{pa 3800 1400}%
\special{pa 4000 1400}%
\special{pa 4000 1600}%
\special{pa 3800 1600}%
\special{pa 3800 1400}%
\special{pa 4000 1400}%
\special{fp}%
%
\special{pn 0}%
\special{sh 0.200}%
\special{pa 4000 1400}%
\special{pa 4200 1400}%
\special{pa 4200 1600}%
\special{pa 4000 1600}%
\special{pa 4000 1400}%
\special{ip}%
\special{pn 8}%
\special{pa 4000 1400}%
\special{pa 4200 1400}%
\special{pa 4200 1600}%
\special{pa 4000 1600}%
\special{pa 4000 1400}%
\special{pa 4200 1400}%
\special{fp}%
%
\special{pn 0}%
\special{sh 0.200}%
\special{pa 4200 1400}%
\special{pa 4400 1400}%
\special{pa 4400 1600}%
\special{pa 4200 1600}%
\special{pa 4200 1400}%
\special{ip}%
\special{pn 8}%
\special{pa 4200 1400}%
\special{pa 4400 1400}%
\special{pa 4400 1600}%
\special{pa 4200 1600}%
\special{pa 4200 1400}%
\special{pa 4400 1400}%
\special{fp}%
%
\special{pn 0}%
\special{sh 0.200}%
\special{pa 4400 1400}%
\special{pa 4600 1400}%
\special{pa 4600 1600}%
\special{pa 4400 1600}%
\special{pa 4400 1400}%
\special{ip}%
\special{pn 8}%
\special{pa 4400 1400}%
\special{pa 4600 1400}%
\special{pa 4600 1600}%
\special{pa 4400 1600}%
\special{pa 4400 1400}%
\special{pa 4600 1400}%
\special{fp}%
\end{picture}}%
\]
The set of locations of white boxes in the resulting configuration is the set $\hc{w}$ :
\begin{equation}\label{eq: 312654 10}
 \hc{w}=\{ (1,3), \ (1,4), \ (1,5), \ (1,6), \ (2,4), \ (2,5), \ (2,6),\ (3,5)\}.
\end{equation}
Accordingly, the generators of the ideal $J_{w,h}$ are 
\begin{equation*}
 g_{1,3}, \ g_{1,4}, \ g_{1,5}, \ g_{1,6}, \ g_{2,4}, \ g_{2,5}, \ g_{2,6}, \ g_{3,5}.
\end{equation*}
The indexes appearing here are exactly the elements appearing in \eqref{eq: 312654 10}. Namely, the locations of white boxes describes the indexes of the generators as exhibited in the following picture.\\
\[
{\unitlength 0.1in%
\begin{picture}(15.9800,15.9800)(34.0200,-29.9800)%
%
\special{pn 0}%
\special{sh 0.200}%
\special{pa 3402 1400}%
\special{pa 3668 1400}%
\special{pa 3668 1666}%
\special{pa 3402 1666}%
\special{pa 3402 1400}%
\special{ip}%
\special{pn 8}%
\special{pa 3402 1400}%
\special{pa 3668 1400}%
\special{pa 3668 1666}%
\special{pa 3402 1666}%
\special{pa 3402 1400}%
\special{pa 3668 1400}%
\special{fp}%
%
\special{pn 0}%
\special{sh 0.200}%
\special{pa 3668 1400}%
\special{pa 3934 1400}%
\special{pa 3934 1666}%
\special{pa 3668 1666}%
\special{pa 3668 1400}%
\special{ip}%
\special{pn 8}%
\special{pa 3668 1400}%
\special{pa 3934 1400}%
\special{pa 3934 1666}%
\special{pa 3668 1666}%
\special{pa 3668 1400}%
\special{pa 3934 1400}%
\special{fp}%
%
\special{pn 8}%
\special{pa 3934 1400}%
\special{pa 4201 1400}%
\special{pa 4201 1666}%
\special{pa 3934 1666}%
\special{pa 3934 1400}%
\special{pa 4201 1400}%
\special{fp}%
%
\special{pn 8}%
\special{pa 4201 1400}%
\special{pa 4467 1400}%
\special{pa 4467 1666}%
\special{pa 4201 1666}%
\special{pa 4201 1400}%
\special{pa 4467 1400}%
\special{fp}%
%
\special{pn 8}%
\special{pa 4467 1400}%
\special{pa 4733 1400}%
\special{pa 4733 1666}%
\special{pa 4467 1666}%
\special{pa 4467 1400}%
\special{pa 4733 1400}%
\special{fp}%
%
\special{pn 8}%
\special{pa 4733 1400}%
\special{pa 5000 1400}%
\special{pa 5000 1666}%
\special{pa 4733 1666}%
\special{pa 4733 1400}%
\special{pa 5000 1400}%
\special{fp}%
%
\special{pn 0}%
\special{sh 0.200}%
\special{pa 3402 1666}%
\special{pa 3668 1666}%
\special{pa 3668 1933}%
\special{pa 3402 1933}%
\special{pa 3402 1666}%
\special{ip}%
\special{pn 8}%
\special{pa 3402 1666}%
\special{pa 3668 1666}%
\special{pa 3668 1933}%
\special{pa 3402 1933}%
\special{pa 3402 1666}%
\special{pa 3668 1666}%
\special{fp}%
%
\special{pn 0}%
\special{sh 0.200}%
\special{pa 3668 1666}%
\special{pa 3934 1666}%
\special{pa 3934 1933}%
\special{pa 3668 1933}%
\special{pa 3668 1666}%
\special{ip}%
\special{pn 8}%
\special{pa 3668 1666}%
\special{pa 3934 1666}%
\special{pa 3934 1933}%
\special{pa 3668 1933}%
\special{pa 3668 1666}%
\special{pa 3934 1666}%
\special{fp}%
%
\special{pn 0}%
\special{sh 0.200}%
\special{pa 3934 1666}%
\special{pa 4201 1666}%
\special{pa 4201 1933}%
\special{pa 3934 1933}%
\special{pa 3934 1666}%
\special{ip}%
\special{pn 8}%
\special{pa 3934 1666}%
\special{pa 4201 1666}%
\special{pa 4201 1933}%
\special{pa 3934 1933}%
\special{pa 3934 1666}%
\special{pa 4201 1666}%
\special{fp}%
%
\special{pn 8}%
\special{pa 4201 1666}%
\special{pa 4467 1666}%
\special{pa 4467 1933}%
\special{pa 4201 1933}%
\special{pa 4201 1666}%
\special{pa 4467 1666}%
\special{fp}%
%
\special{pn 8}%
\special{pa 4467 1666}%
\special{pa 4733 1666}%
\special{pa 4733 1933}%
\special{pa 4467 1933}%
\special{pa 4467 1666}%
\special{pa 4733 1666}%
\special{fp}%
%
\special{pn 8}%
\special{pa 4733 1666}%
\special{pa 5000 1666}%
\special{pa 5000 1933}%
\special{pa 4733 1933}%
\special{pa 4733 1666}%
\special{pa 5000 1666}%
\special{fp}%
%
\special{pn 0}%
\special{sh 0.200}%
\special{pa 3402 1933}%
\special{pa 3668 1933}%
\special{pa 3668 2199}%
\special{pa 3402 2199}%
\special{pa 3402 1933}%
\special{ip}%
\special{pn 8}%
\special{pa 3402 1933}%
\special{pa 3668 1933}%
\special{pa 3668 2199}%
\special{pa 3402 2199}%
\special{pa 3402 1933}%
\special{pa 3668 1933}%
\special{fp}%
%
\special{pn 0}%
\special{sh 0.200}%
\special{pa 3668 1933}%
\special{pa 3934 1933}%
\special{pa 3934 2199}%
\special{pa 3668 2199}%
\special{pa 3668 1933}%
\special{ip}%
\special{pn 8}%
\special{pa 3668 1933}%
\special{pa 3934 1933}%
\special{pa 3934 2199}%
\special{pa 3668 2199}%
\special{pa 3668 1933}%
\special{pa 3934 1933}%
\special{fp}%
%
\special{pn 0}%
\special{sh 0.200}%
\special{pa 3934 1933}%
\special{pa 4201 1933}%
\special{pa 4201 2199}%
\special{pa 3934 2199}%
\special{pa 3934 1933}%
\special{ip}%
\special{pn 8}%
\special{pa 3934 1933}%
\special{pa 4201 1933}%
\special{pa 4201 2199}%
\special{pa 3934 2199}%
\special{pa 3934 1933}%
\special{pa 4201 1933}%
\special{fp}%
%
\special{pn 0}%
\special{sh 0.200}%
\special{pa 4201 1933}%
\special{pa 4467 1933}%
\special{pa 4467 2199}%
\special{pa 4201 2199}%
\special{pa 4201 1933}%
\special{ip}%
\special{pn 8}%
\special{pa 4201 1933}%
\special{pa 4467 1933}%
\special{pa 4467 2199}%
\special{pa 4201 2199}%
\special{pa 4201 1933}%
\special{pa 4467 1933}%
\special{fp}%
%
\special{pn 8}%
\special{pa 4467 1933}%
\special{pa 4733 1933}%
\special{pa 4733 2199}%
\special{pa 4467 2199}%
\special{pa 4467 1933}%
\special{pa 4733 1933}%
\special{fp}%
%
\special{pn 0}%
\special{sh 0.200}%
\special{pa 4733 1933}%
\special{pa 5000 1933}%
\special{pa 5000 2199}%
\special{pa 4733 2199}%
\special{pa 4733 1933}%
\special{ip}%
\special{pn 8}%
\special{pa 4733 1933}%
\special{pa 5000 1933}%
\special{pa 5000 2199}%
\special{pa 4733 2199}%
\special{pa 4733 1933}%
\special{pa 5000 1933}%
\special{fp}%
%
\special{pn 0}%
\special{sh 0.200}%
\special{pa 3402 2199}%
\special{pa 3668 2199}%
\special{pa 3668 2465}%
\special{pa 3402 2465}%
\special{pa 3402 2199}%
\special{ip}%
\special{pn 8}%
\special{pa 3402 2199}%
\special{pa 3668 2199}%
\special{pa 3668 2465}%
\special{pa 3402 2465}%
\special{pa 3402 2199}%
\special{pa 3668 2199}%
\special{fp}%
%
\special{pn 0}%
\special{sh 0.200}%
\special{pa 3668 2199}%
\special{pa 3934 2199}%
\special{pa 3934 2465}%
\special{pa 3668 2465}%
\special{pa 3668 2199}%
\special{ip}%
\special{pn 8}%
\special{pa 3668 2199}%
\special{pa 3934 2199}%
\special{pa 3934 2465}%
\special{pa 3668 2465}%
\special{pa 3668 2199}%
\special{pa 3934 2199}%
\special{fp}%
%
\special{pn 0}%
\special{sh 0.200}%
\special{pa 3934 2199}%
\special{pa 4201 2199}%
\special{pa 4201 2465}%
\special{pa 3934 2465}%
\special{pa 3934 2199}%
\special{ip}%
\special{pn 8}%
\special{pa 3934 2199}%
\special{pa 4201 2199}%
\special{pa 4201 2465}%
\special{pa 3934 2465}%
\special{pa 3934 2199}%
\special{pa 4201 2199}%
\special{fp}%
%
\special{pn 0}%
\special{sh 0.200}%
\special{pa 4201 2199}%
\special{pa 4467 2199}%
\special{pa 4467 2465}%
\special{pa 4201 2465}%
\special{pa 4201 2199}%
\special{ip}%
\special{pn 8}%
\special{pa 4201 2199}%
\special{pa 4467 2199}%
\special{pa 4467 2465}%
\special{pa 4201 2465}%
\special{pa 4201 2199}%
\special{pa 4467 2199}%
\special{fp}%
%
\special{pn 0}%
\special{sh 0.200}%
\special{pa 4467 2199}%
\special{pa 4733 2199}%
\special{pa 4733 2465}%
\special{pa 4467 2465}%
\special{pa 4467 2199}%
\special{ip}%
\special{pn 8}%
\special{pa 4467 2199}%
\special{pa 4733 2199}%
\special{pa 4733 2465}%
\special{pa 4467 2465}%
\special{pa 4467 2199}%
\special{pa 4733 2199}%
\special{fp}%
%
\special{pn 0}%
\special{sh 0.200}%
\special{pa 4733 2199}%
\special{pa 5000 2199}%
\special{pa 5000 2465}%
\special{pa 4733 2465}%
\special{pa 4733 2199}%
\special{ip}%
\special{pn 8}%
\special{pa 4733 2199}%
\special{pa 5000 2199}%
\special{pa 5000 2465}%
\special{pa 4733 2465}%
\special{pa 4733 2199}%
\special{pa 5000 2199}%
\special{fp}%
%
\special{pn 0}%
\special{sh 0.200}%
\special{pa 3402 2465}%
\special{pa 3668 2465}%
\special{pa 3668 2732}%
\special{pa 3402 2732}%
\special{pa 3402 2465}%
\special{ip}%
\special{pn 8}%
\special{pa 3402 2465}%
\special{pa 3668 2465}%
\special{pa 3668 2732}%
\special{pa 3402 2732}%
\special{pa 3402 2465}%
\special{pa 3668 2465}%
\special{fp}%
%
\special{pn 0}%
\special{sh 0.200}%
\special{pa 3668 2465}%
\special{pa 3934 2465}%
\special{pa 3934 2732}%
\special{pa 3668 2732}%
\special{pa 3668 2465}%
\special{ip}%
\special{pn 8}%
\special{pa 3668 2465}%
\special{pa 3934 2465}%
\special{pa 3934 2732}%
\special{pa 3668 2732}%
\special{pa 3668 2465}%
\special{pa 3934 2465}%
\special{fp}%
%
\special{pn 0}%
\special{sh 0.200}%
\special{pa 3934 2465}%
\special{pa 4201 2465}%
\special{pa 4201 2732}%
\special{pa 3934 2732}%
\special{pa 3934 2465}%
\special{ip}%
\special{pn 8}%
\special{pa 3934 2465}%
\special{pa 4201 2465}%
\special{pa 4201 2732}%
\special{pa 3934 2732}%
\special{pa 3934 2465}%
\special{pa 4201 2465}%
\special{fp}%
%
\special{pn 0}%
\special{sh 0.200}%
\special{pa 4201 2465}%
\special{pa 4467 2465}%
\special{pa 4467 2732}%
\special{pa 4201 2732}%
\special{pa 4201 2465}%
\special{ip}%
\special{pn 8}%
\special{pa 4201 2465}%
\special{pa 4467 2465}%
\special{pa 4467 2732}%
\special{pa 4201 2732}%
\special{pa 4201 2465}%
\special{pa 4467 2465}%
\special{fp}%
%
\special{pn 0}%
\special{sh 0.200}%
\special{pa 4467 2465}%
\special{pa 4733 2465}%
\special{pa 4733 2732}%
\special{pa 4467 2732}%
\special{pa 4467 2465}%
\special{ip}%
\special{pn 8}%
\special{pa 4467 2465}%
\special{pa 4733 2465}%
\special{pa 4733 2732}%
\special{pa 4467 2732}%
\special{pa 4467 2465}%
\special{pa 4733 2465}%
\special{fp}%
%
\special{pn 0}%
\special{sh 0.200}%
\special{pa 4733 2465}%
\special{pa 5000 2465}%
\special{pa 5000 2732}%
\special{pa 4733 2732}%
\special{pa 4733 2465}%
\special{ip}%
\special{pn 8}%
\special{pa 4733 2465}%
\special{pa 5000 2465}%
\special{pa 5000 2732}%
\special{pa 4733 2732}%
\special{pa 4733 2465}%
\special{pa 5000 2465}%
\special{fp}%
%
\special{pn 0}%
\special{sh 0.200}%
\special{pa 3402 2732}%
\special{pa 3668 2732}%
\special{pa 3668 2998}%
\special{pa 3402 2998}%
\special{pa 3402 2732}%
\special{ip}%
\special{pn 8}%
\special{pa 3402 2732}%
\special{pa 3668 2732}%
\special{pa 3668 2998}%
\special{pa 3402 2998}%
\special{pa 3402 2732}%
\special{pa 3668 2732}%
\special{fp}%
%
\special{pn 0}%
\special{sh 0.200}%
\special{pa 3668 2732}%
\special{pa 3934 2732}%
\special{pa 3934 2998}%
\special{pa 3668 2998}%
\special{pa 3668 2732}%
\special{ip}%
\special{pn 8}%
\special{pa 3668 2732}%
\special{pa 3934 2732}%
\special{pa 3934 2998}%
\special{pa 3668 2998}%
\special{pa 3668 2732}%
\special{pa 3934 2732}%
\special{fp}%
%
\special{pn 0}%
\special{sh 0.200}%
\special{pa 3934 2732}%
\special{pa 4201 2732}%
\special{pa 4201 2998}%
\special{pa 3934 2998}%
\special{pa 3934 2732}%
\special{ip}%
\special{pn 8}%
\special{pa 3934 2732}%
\special{pa 4201 2732}%
\special{pa 4201 2998}%
\special{pa 3934 2998}%
\special{pa 3934 2732}%
\special{pa 4201 2732}%
\special{fp}%
%
\special{pn 0}%
\special{sh 0.200}%
\special{pa 4201 2732}%
\special{pa 4467 2732}%
\special{pa 4467 2998}%
\special{pa 4201 2998}%
\special{pa 4201 2732}%
\special{ip}%
\special{pn 8}%
\special{pa 4201 2732}%
\special{pa 4467 2732}%
\special{pa 4467 2998}%
\special{pa 4201 2998}%
\special{pa 4201 2732}%
\special{pa 4467 2732}%
\special{fp}%
%
\special{pn 0}%
\special{sh 0.200}%
\special{pa 4467 2732}%
\special{pa 4733 2732}%
\special{pa 4733 2998}%
\special{pa 4467 2998}%
\special{pa 4467 2732}%
\special{ip}%
\special{pn 8}%
\special{pa 4467 2732}%
\special{pa 4733 2732}%
\special{pa 4733 2998}%
\special{pa 4467 2998}%
\special{pa 4467 2732}%
\special{pa 4733 2732}%
\special{fp}%
%
\special{pn 0}%
\special{sh 0.200}%
\special{pa 4733 2732}%
\special{pa 5000 2732}%
\special{pa 5000 2998}%
\special{pa 4733 2998}%
\special{pa 4733 2732}%
\special{ip}%
\special{pn 8}%
\special{pa 4733 2732}%
\special{pa 5000 2732}%
\special{pa 5000 2998}%
\special{pa 4733 2998}%
\special{pa 4733 2732}%
\special{pa 5000 2732}%
\special{fp}%
\put(42.2000,-16.0000){\makebox(0,0)[lb]{$g_{1,4}$}}%
\put(39.6000,-16.0000){\makebox(0,0)[lb]{$g_{1,3}$}}%
\put(47.6000,-16.0000){\makebox(0,0)[lb]{$g_{1,6}$}}%
\put(44.9000,-16.0000){\makebox(0,0)[lb]{$g_{1,5}$}}%
\put(42.2000,-18.6000){\makebox(0,0)[lb]{$g_{2,4}$}}%
\put(47.6000,-18.6000){\makebox(0,0)[lb]{$g_{2,6}$}}%
\put(44.9000,-18.6000){\makebox(0,0)[lb]{$g_{2,5}$}}%
\put(44.9000,-21.2000){\makebox(0,0)[lb]{$g_{3,5}$}}%
\end{picture}}%
\]
\end{example}

\vspace{5pt}
 
\begin{example}\label{eg: locations of gij 2}
Let $n=6$, and $h=(3,4,4,5,6,6)$ as above. 
For $w=312654$, $\hc{w}$ and the generators of the ideal $J_{w,h}$ are depicted in the following picture.\\
\[
{\unitlength 0.1in%
\begin{picture}(15.9800,15.9800)(34.0200,-29.9800)%
%
\special{pn 0}%
\special{sh 0.200}%
\special{pa 3402 1400}%
\special{pa 3668 1400}%
\special{pa 3668 1666}%
\special{pa 3402 1666}%
\special{pa 3402 1400}%
\special{ip}%
\special{pn 8}%
\special{pa 3402 1400}%
\special{pa 3668 1400}%
\special{pa 3668 1666}%
\special{pa 3402 1666}%
\special{pa 3402 1400}%
\special{pa 3668 1400}%
\special{fp}%
%
\special{pn 0}%
\special{sh 0.200}%
\special{pa 3668 1400}%
\special{pa 3934 1400}%
\special{pa 3934 1666}%
\special{pa 3668 1666}%
\special{pa 3668 1400}%
\special{ip}%
\special{pn 8}%
\special{pa 3668 1400}%
\special{pa 3934 1400}%
\special{pa 3934 1666}%
\special{pa 3668 1666}%
\special{pa 3668 1400}%
\special{pa 3934 1400}%
\special{fp}%
%
\special{pn 0}%
\special{sh 0.200}%
\special{pa 3934 1400}%
\special{pa 4201 1400}%
\special{pa 4201 1666}%
\special{pa 3934 1666}%
\special{pa 3934 1400}%
\special{ip}%
\special{pn 8}%
\special{pa 3934 1400}%
\special{pa 4201 1400}%
\special{pa 4201 1666}%
\special{pa 3934 1666}%
\special{pa 3934 1400}%
\special{pa 4201 1400}%
\special{fp}%
%
\special{pn 0}%
\special{sh 0.200}%
\special{pa 4201 1400}%
\special{pa 4467 1400}%
\special{pa 4467 1666}%
\special{pa 4201 1666}%
\special{pa 4201 1400}%
\special{ip}%
\special{pn 8}%
\special{pa 4201 1400}%
\special{pa 4467 1400}%
\special{pa 4467 1666}%
\special{pa 4201 1666}%
\special{pa 4201 1400}%
\special{pa 4467 1400}%
\special{fp}%
%
\special{pn 0}%
\special{sh 0.200}%
\special{pa 4467 1400}%
\special{pa 4733 1400}%
\special{pa 4733 1666}%
\special{pa 4467 1666}%
\special{pa 4467 1400}%
\special{ip}%
\special{pn 8}%
\special{pa 4467 1400}%
\special{pa 4733 1400}%
\special{pa 4733 1666}%
\special{pa 4467 1666}%
\special{pa 4467 1400}%
\special{pa 4733 1400}%
\special{fp}%
%
\special{pn 0}%
\special{sh 0.200}%
\special{pa 4733 1400}%
\special{pa 5000 1400}%
\special{pa 5000 1666}%
\special{pa 4733 1666}%
\special{pa 4733 1400}%
\special{ip}%
\special{pn 8}%
\special{pa 4733 1400}%
\special{pa 5000 1400}%
\special{pa 5000 1666}%
\special{pa 4733 1666}%
\special{pa 4733 1400}%
\special{pa 5000 1400}%
\special{fp}%
%
\special{pn 0}%
\special{sh 0.200}%
\special{pa 3402 1666}%
\special{pa 3668 1666}%
\special{pa 3668 1933}%
\special{pa 3402 1933}%
\special{pa 3402 1666}%
\special{ip}%
\special{pn 8}%
\special{pa 3402 1666}%
\special{pa 3668 1666}%
\special{pa 3668 1933}%
\special{pa 3402 1933}%
\special{pa 3402 1666}%
\special{pa 3668 1666}%
\special{fp}%
%
\special{pn 0}%
\special{sh 0.200}%
\special{pa 3668 1666}%
\special{pa 3934 1666}%
\special{pa 3934 1933}%
\special{pa 3668 1933}%
\special{pa 3668 1666}%
\special{ip}%
\special{pn 8}%
\special{pa 3668 1666}%
\special{pa 3934 1666}%
\special{pa 3934 1933}%
\special{pa 3668 1933}%
\special{pa 3668 1666}%
\special{pa 3934 1666}%
\special{fp}%
%
\special{pn 0}%
\special{sh 0.200}%
\special{pa 3934 1666}%
\special{pa 4201 1666}%
\special{pa 4201 1933}%
\special{pa 3934 1933}%
\special{pa 3934 1666}%
\special{ip}%
\special{pn 8}%
\special{pa 3934 1666}%
\special{pa 4201 1666}%
\special{pa 4201 1933}%
\special{pa 3934 1933}%
\special{pa 3934 1666}%
\special{pa 4201 1666}%
\special{fp}%
%
\special{pn 0}%
\special{sh 0.200}%
\special{pa 4201 1666}%
\special{pa 4467 1666}%
\special{pa 4467 1933}%
\special{pa 4201 1933}%
\special{pa 4201 1666}%
\special{ip}%
\special{pn 8}%
\special{pa 4201 1666}%
\special{pa 4467 1666}%
\special{pa 4467 1933}%
\special{pa 4201 1933}%
\special{pa 4201 1666}%
\special{pa 4467 1666}%
\special{fp}%
%
\special{pn 0}%
\special{sh 0.200}%
\special{pa 4467 1666}%
\special{pa 4733 1666}%
\special{pa 4733 1933}%
\special{pa 4467 1933}%
\special{pa 4467 1666}%
\special{ip}%
\special{pn 8}%
\special{pa 4467 1666}%
\special{pa 4733 1666}%
\special{pa 4733 1933}%
\special{pa 4467 1933}%
\special{pa 4467 1666}%
\special{pa 4733 1666}%
\special{fp}%
%
\special{pn 0}%
\special{sh 0.200}%
\special{pa 4733 1666}%
\special{pa 5000 1666}%
\special{pa 5000 1933}%
\special{pa 4733 1933}%
\special{pa 4733 1666}%
\special{ip}%
\special{pn 8}%
\special{pa 4733 1666}%
\special{pa 5000 1666}%
\special{pa 5000 1933}%
\special{pa 4733 1933}%
\special{pa 4733 1666}%
\special{pa 5000 1666}%
\special{fp}%
%
\special{pn 0}%
\special{sh 0.200}%
\special{pa 3402 1933}%
\special{pa 3668 1933}%
\special{pa 3668 2199}%
\special{pa 3402 2199}%
\special{pa 3402 1933}%
\special{ip}%
\special{pn 8}%
\special{pa 3402 1933}%
\special{pa 3668 1933}%
\special{pa 3668 2199}%
\special{pa 3402 2199}%
\special{pa 3402 1933}%
\special{pa 3668 1933}%
\special{fp}%
%
\special{pn 0}%
\special{sh 0.200}%
\special{pa 3668 1933}%
\special{pa 3934 1933}%
\special{pa 3934 2199}%
\special{pa 3668 2199}%
\special{pa 3668 1933}%
\special{ip}%
\special{pn 8}%
\special{pa 3668 1933}%
\special{pa 3934 1933}%
\special{pa 3934 2199}%
\special{pa 3668 2199}%
\special{pa 3668 1933}%
\special{pa 3934 1933}%
\special{fp}%
%
\special{pn 0}%
\special{sh 0.200}%
\special{pa 3934 1933}%
\special{pa 4201 1933}%
\special{pa 4201 2199}%
\special{pa 3934 2199}%
\special{pa 3934 1933}%
\special{ip}%
\special{pn 8}%
\special{pa 3934 1933}%
\special{pa 4201 1933}%
\special{pa 4201 2199}%
\special{pa 3934 2199}%
\special{pa 3934 1933}%
\special{pa 4201 1933}%
\special{fp}%
%
\special{pn 0}%
\special{sh 0.200}%
\special{pa 4201 1933}%
\special{pa 4467 1933}%
\special{pa 4467 2199}%
\special{pa 4201 2199}%
\special{pa 4201 1933}%
\special{ip}%
\special{pn 8}%
\special{pa 4201 1933}%
\special{pa 4467 1933}%
\special{pa 4467 2199}%
\special{pa 4201 2199}%
\special{pa 4201 1933}%
\special{pa 4467 1933}%
\special{fp}%
%
\special{pn 0}%
\special{sh 0.200}%
\special{pa 4467 1933}%
\special{pa 4733 1933}%
\special{pa 4733 2199}%
\special{pa 4467 2199}%
\special{pa 4467 1933}%
\special{ip}%
\special{pn 8}%
\special{pa 4467 1933}%
\special{pa 4733 1933}%
\special{pa 4733 2199}%
\special{pa 4467 2199}%
\special{pa 4467 1933}%
\special{pa 4733 1933}%
\special{fp}%
%
\special{pn 0}%
\special{sh 0.200}%
\special{pa 4733 1933}%
\special{pa 5000 1933}%
\special{pa 5000 2199}%
\special{pa 4733 2199}%
\special{pa 4733 1933}%
\special{ip}%
\special{pn 8}%
\special{pa 4733 1933}%
\special{pa 5000 1933}%
\special{pa 5000 2199}%
\special{pa 4733 2199}%
\special{pa 4733 1933}%
\special{pa 5000 1933}%
\special{fp}%
%
\special{pn 8}%
\special{pa 3402 2199}%
\special{pa 3668 2199}%
\special{pa 3668 2465}%
\special{pa 3402 2465}%
\special{pa 3402 2199}%
\special{pa 3668 2199}%
\special{fp}%
%
\special{pn 8}%
\special{pa 3668 2199}%
\special{pa 3934 2199}%
\special{pa 3934 2465}%
\special{pa 3668 2465}%
\special{pa 3668 2199}%
\special{pa 3934 2199}%
\special{fp}%
%
\special{pn 8}%
\special{pa 3934 2199}%
\special{pa 4201 2199}%
\special{pa 4201 2465}%
\special{pa 3934 2465}%
\special{pa 3934 2199}%
\special{pa 4201 2199}%
\special{fp}%
%
\special{pn 0}%
\special{sh 0.200}%
\special{pa 4201 2199}%
\special{pa 4467 2199}%
\special{pa 4467 2465}%
\special{pa 4201 2465}%
\special{pa 4201 2199}%
\special{ip}%
\special{pn 8}%
\special{pa 4201 2199}%
\special{pa 4467 2199}%
\special{pa 4467 2465}%
\special{pa 4201 2465}%
\special{pa 4201 2199}%
\special{pa 4467 2199}%
\special{fp}%
%
\special{pn 0}%
\special{sh 0.200}%
\special{pa 4467 2199}%
\special{pa 4733 2199}%
\special{pa 4733 2465}%
\special{pa 4467 2465}%
\special{pa 4467 2199}%
\special{ip}%
\special{pn 8}%
\special{pa 4467 2199}%
\special{pa 4733 2199}%
\special{pa 4733 2465}%
\special{pa 4467 2465}%
\special{pa 4467 2199}%
\special{pa 4733 2199}%
\special{fp}%
%
\special{pn 8}%
\special{pa 4733 2199}%
\special{pa 5000 2199}%
\special{pa 5000 2465}%
\special{pa 4733 2465}%
\special{pa 4733 2199}%
\special{pa 5000 2199}%
\special{fp}%
%
\special{pn 8}%
\special{pa 3402 2465}%
\special{pa 3668 2465}%
\special{pa 3668 2732}%
\special{pa 3402 2732}%
\special{pa 3402 2465}%
\special{pa 3668 2465}%
\special{fp}%
%
\special{pn 8}%
\special{pa 3668 2465}%
\special{pa 3934 2465}%
\special{pa 3934 2732}%
\special{pa 3668 2732}%
\special{pa 3668 2465}%
\special{pa 3934 2465}%
\special{fp}%
%
\special{pn 8}%
\special{pa 3934 2465}%
\special{pa 4201 2465}%
\special{pa 4201 2732}%
\special{pa 3934 2732}%
\special{pa 3934 2465}%
\special{pa 4201 2465}%
\special{fp}%
%
\special{pn 0}%
\special{sh 0.200}%
\special{pa 4201 2465}%
\special{pa 4467 2465}%
\special{pa 4467 2732}%
\special{pa 4201 2732}%
\special{pa 4201 2465}%
\special{ip}%
\special{pn 8}%
\special{pa 4201 2465}%
\special{pa 4467 2465}%
\special{pa 4467 2732}%
\special{pa 4201 2732}%
\special{pa 4201 2465}%
\special{pa 4467 2465}%
\special{fp}%
%
\special{pn 0}%
\special{sh 0.200}%
\special{pa 4467 2465}%
\special{pa 4733 2465}%
\special{pa 4733 2732}%
\special{pa 4467 2732}%
\special{pa 4467 2465}%
\special{ip}%
\special{pn 8}%
\special{pa 4467 2465}%
\special{pa 4733 2465}%
\special{pa 4733 2732}%
\special{pa 4467 2732}%
\special{pa 4467 2465}%
\special{pa 4733 2465}%
\special{fp}%
%
\special{pn 0}%
\special{sh 0.200}%
\special{pa 4733 2465}%
\special{pa 5000 2465}%
\special{pa 5000 2732}%
\special{pa 4733 2732}%
\special{pa 4733 2465}%
\special{ip}%
\special{pn 8}%
\special{pa 4733 2465}%
\special{pa 5000 2465}%
\special{pa 5000 2732}%
\special{pa 4733 2732}%
\special{pa 4733 2465}%
\special{pa 5000 2465}%
\special{fp}%
%
\special{pn 0}%
\special{sh 0.200}%
\special{pa 3402 2732}%
\special{pa 3668 2732}%
\special{pa 3668 2998}%
\special{pa 3402 2998}%
\special{pa 3402 2732}%
\special{ip}%
\special{pn 8}%
\special{pa 3402 2732}%
\special{pa 3668 2732}%
\special{pa 3668 2998}%
\special{pa 3402 2998}%
\special{pa 3402 2732}%
\special{pa 3668 2732}%
\special{fp}%
%
\special{pn 0}%
\special{sh 0.200}%
\special{pa 3668 2732}%
\special{pa 3934 2732}%
\special{pa 3934 2998}%
\special{pa 3668 2998}%
\special{pa 3668 2732}%
\special{ip}%
\special{pn 8}%
\special{pa 3668 2732}%
\special{pa 3934 2732}%
\special{pa 3934 2998}%
\special{pa 3668 2998}%
\special{pa 3668 2732}%
\special{pa 3934 2732}%
\special{fp}%
%
\special{pn 8}%
\special{pa 3934 2732}%
\special{pa 4201 2732}%
\special{pa 4201 2998}%
\special{pa 3934 2998}%
\special{pa 3934 2732}%
\special{pa 4201 2732}%
\special{fp}%
%
\special{pn 0}%
\special{sh 0.200}%
\special{pa 4201 2732}%
\special{pa 4467 2732}%
\special{pa 4467 2998}%
\special{pa 4201 2998}%
\special{pa 4201 2732}%
\special{ip}%
\special{pn 8}%
\special{pa 4201 2732}%
\special{pa 4467 2732}%
\special{pa 4467 2998}%
\special{pa 4201 2998}%
\special{pa 4201 2732}%
\special{pa 4467 2732}%
\special{fp}%
%
\special{pn 0}%
\special{sh 0.200}%
\special{pa 4467 2732}%
\special{pa 4733 2732}%
\special{pa 4733 2998}%
\special{pa 4467 2998}%
\special{pa 4467 2732}%
\special{ip}%
\special{pn 8}%
\special{pa 4467 2732}%
\special{pa 4733 2732}%
\special{pa 4733 2998}%
\special{pa 4467 2998}%
\special{pa 4467 2732}%
\special{pa 4733 2732}%
\special{fp}%
%
\special{pn 0}%
\special{sh 0.200}%
\special{pa 4733 2732}%
\special{pa 5000 2732}%
\special{pa 5000 2998}%
\special{pa 4733 2998}%
\special{pa 4733 2732}%
\special{ip}%
\special{pn 8}%
\special{pa 4733 2732}%
\special{pa 5000 2732}%
\special{pa 5000 2998}%
\special{pa 4733 2998}%
\special{pa 4733 2732}%
\special{pa 5000 2732}%
\special{fp}%
\put(34.2000,-24.0000){\makebox(0,0)[lb]{$g_{4,1}$}}%
\put(34.2000,-26.6000){\makebox(0,0)[lb]{$g_{5,1}$}}%
\put(36.9000,-26.6000){\makebox(0,0)[lb]{$g_{5,2}$}}%
\put(36.9000,-24.0000){\makebox(0,0)[lb]{$g_{4,2}$}}%
\put(39.6000,-24.0000){\makebox(0,0)[lb]{$g_{4,3}$}}%
\put(39.6000,-26.6000){\makebox(0,0)[lb]{$g_{5,3}$}}%
\put(39.6000,-29.2000){\makebox(0,0)[lb]{$g_{6,3}$}}%
\put(47.6000,-24.0000){\makebox(0,0)[lb]{$g_{4,6}$}}%
\end{picture}}%
\]
\end{example}

\vspace{20pt}
In the next section, we will study the Jacobian matrix of the generators $g_{i,j}$ of $J_{w,h}$. 
In particular, we will see that it has a structure of a block upper triangular matrix with respect to a particular ordering of the generators $g_{i,j}$ and the variables $z_{i,j}$ defined as follows.
We first order the elements $(i,j)$ in $[n]\times[n]$ by
\begin{align*}
\begin{cases}
(i,j)<(i',j') \quad &\text{if } i-j < i'-j', \\
(i,j)<(i',j') &\text{if } i-j = i'-j' \text{ and } i < i'.
\end{cases}
\end{align*}
This leads us to order the generators $g_{i,j}$ of $J_{w,h}$ according to their indexes; we say
$g_{i,j}<g_{i',j'}$ when $(i,j)<(i',j')$.
We also order the variables $z_{i,j}$ in the same manner.
Then the linear terms of $g_{i,j}$ stated in Proposition~\ref{prop: linear term} are ordered so that the genuine variable following $z_{i,j-1}$ is precisely $z_{i+1,j}$ (see \eqref{eq: linear term}). 

\begin{example}\label{ex: order}
The generators $g_{i,j}$ appearing in Example~\ref{eg: locations of gij} are ordered as
\begin{align*}
g_{1,6} \ \ < \ \  g_{1,5} < g_{2,6}  \ \ < \ \ g_{1,4} < g_{2,5} \ \ < \ \ g_{1,3} < g_{2,4} < g_{3,5}.
\end{align*}
If we view these $g_{i,j}$ placed in the grid $[n]\times[n]$, we may express this order more pictorially; it is simply given by reading the positions of these $g_{i,j}$ in $[n]\times[n]$ from the upper-left to the lower-right along upper or lower diagonal lines.
\end{example}

\bigskip

\section{Computing Jacobian matrices} \label{sec:Jacobi}

\subsection{Jacobian criterion for $\Hess(N,h)$}\label{subsec: Jacobi}
In this section, we determine which permutation flag $w_{\bullet}$ is a singular point of $\Hess(N,h)$ by using the open affine cover \eqref{eq: open cover of Hess} given by $\Hess(N,h)\cap wU^-$ such that $w_{\bullet}\in \Hess(N,h)$.
Since each $\Hess(N,h)\cap wU^-$ is an affine subvariety of $wU^-\cong \C^{n(n-1)/2}$, we rewrite the Jacobian criterion (Theorem~\ref{theorem:Jacobian_Criterion'}) for $\Hess(N,h)\cap wU^-$ in the following.
To this end, recall from Section~\ref{subsec: hc} that the defining ideal $J_{w,h}$ of $\Hess(N,h)\cap wU^-$ is generated by $g_{i,j}$ for $(i,j)\in \hc{w}$, and observe that
\begin{align*}
\dim_{\C} wU^- - \dim_{\C}(\Hess(N,h) \cap wU^-)
= \binom{n}{2} - \sum_{j=1}^n (h(j)-j) 
= |\hc{w}|
\end{align*}
by \eqref{eq: dim of Hess} and \eqref{eq:r_h} (c.f.\ \eqref{eq: codim of Hess}). We now obtain the following from the Jacobian criterion.

\begin{proposition}[Jacobian criterion for $\Hess(N,h)\cap wU^-$]\label{theorem:Jacobian_Criterion_Hessenberg}
A point $V_{\bullet}\in \Hess(N,h)\cap wU^-$ is a singular point of $\Hess(N,h)$ if and only if 
\begin{align}\label{eq: Jacobian_Criterion_Hessenberg}
\rank \left(\frac{\partial g_{i,j} }{\partial z_{p,q}}(V_{\bullet})\right)< |\hc{w}| ,
\end{align}
where the row indexes $(i,j)$ and the column indexes $(p,q)$ of the Jacobian matrix run over all $(i,j)\in \hc{w}$ and all $(p,q)$ for which $z_{p,q}$ is a genuine variables, respectively.
\end{proposition}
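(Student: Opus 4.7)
The plan is to derive this proposition as a direct instance of the standard Jacobian criterion (Theorem~\ref{theorem:Jacobian_Criterion'}) applied to the affine open $\Hess(N,h)\cap wU^-$ sitting inside $wU^-\cong\C^{n(n-1)/2}$, so the work is essentially bookkeeping: identify the ambient dimension, the generators of the defining ideal, and the dimension of the subvariety.

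First, I would recall that the coordinate ring of $wU^-$ is the polynomial ring over $\C$ freely generated by the genuine variables $x_{i,j}$ (equivalently $z_{i,j}$) by Lemma~\ref{lem: genuine zij}, so $wU^-\cong \C^{n(n-1)/2}$ and the Jacobian criterion applies with $m=n(n-1)/2$. Next, by Lemma~\ref{lem: genuine zij} combined with the definition \eqref{eq:r_h} of $\hc{w}$, the defining ideal $J_{w,h}$ of $\Hess(N,h)\cap wU^-$ in $\C[\mathbf{x}_w]$ is generated exactly by the $g_{i,j}$ with $(i,j)\in \hc{w}$. These are the rows of the Jacobian matrix in \eqref{eq: Jacobian_Criterion_Hessenberg}, while the columns are indexed by the genuine variables $z_{p,q}$, which are precisely the coordinates on $wU^-$. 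So the matrix in \eqref{eq: Jacobian_Criterion_Hessenberg} is literally the Jacobian matrix of this generating set.

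The key remaining input is the codimension count. Since $\Hess(N,h)$ is irreducible by \cite[Lemma~7.1]{AT10} and $\Hess(N,h)\cap wU^-$ is a nonempty (it contains $w_\bullet$) Zariski-open subset, we have
\[
\dim_{\C}(\Hess(N,h)\cap wU^-)=\dim_{\C}\Hess(N,h)=\sum_{j=1}^n(h(j)-j)
\]
by \eqref{eq: dim of Hess}. Hence
\[
\dim_{\C} wU^- -\dim_{\C}(\Hess(N,h)\cap wU^-)=\binom{n}{2}-\sum_{j=1}^n(h(j)-j)=|\hc{w}|,
\]
where the last equality is \eqref{eq:r_h} together with the codimension identity \eqref{eq: codim of Hess}. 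Plugging this into Theorem~\ref{theorem:Jacobian_Criterion'} shows that $V_\bullet\in\Hess(N,h)\cap wU^-$ is a singular point of $\Hess(N,h)\cap wU^-$ if and only if the rank inequality \eqref{eq: Jacobian_Criterion_Hessenberg} holds.

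Finally, I would observe that singularity is a local property: since $\Hess(N,h)\cap wU^-$ is a Zariski-open subset of $\Hess(N,h)$, a point $V_\bullet$ in this intersection is singular in $\Hess(N,h)$ if and only if it is singular in $\Hess(N,h)\cap wU^-$. This translates the previous statement into the proposition as stated. There is no real obstacle; the only subtlety worth flagging is ensuring that $\Hess(N,h)\cap wU^-$ is equidimensional (here it is even irreducible as an open subset of an irreducible variety) so that a single value of $r$ can be used in the Jacobian criterion.
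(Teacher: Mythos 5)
Your proposal is correct and follows essentially the same route as the paper: the paper likewise obtains this proposition by applying Theorem~\ref{theorem:Jacobian_Criterion'} to the affine variety $\Hess(N,h)\cap wU^-\subseteq wU^-\cong\C^{n(n-1)/2}$ with generators $g_{i,j}$ for $(i,j)\in\hc{w}$, using the codimension identity $\binom{n}{2}-\sum_{j=1}^n(h(j)-j)=|\hc{w}|$ from \eqref{eq: dim of Hess} and \eqref{eq:r_h}. Your added remarks on irreducibility and the locality of singularity are sound and only make explicit what the paper leaves implicit.
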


\begin{remark}\label{rem: rank and the number of rows}
The number $|\hc{w}|$ in \eqref{eq: Jacobian_Criterion_Hessenberg} is exactly the number of rows of the Jacobian matrix in the left hand side.
\end{remark}

\vspace{10pt}

Recall from Lemma~\ref{lem: genuine zij} that the genuine variables $z_{p,q}$ in $\C[{\bf x}_w]$ are the ones which satisfy $w^{-1}(p)>w^{-1}(q)$.
For a set of (ordered) functions $F_1,F_2,\ldots,F_\ell\in \C[{\bf x}_w]$, we denote by 
 \begin{align*}
\text{Jac}\Big(\{F_1,F_2,\ldots,F_\ell\},\{z_{p,q}\mid w^{-1}(p)>w^{-1}(q)\}\Big)
\end{align*} 
the Jacobian matrix consisting of $\frac{\partial F_k}{\partial z_{p,q}}$ for $k=1,2,\ldots,\ell$ and $(p,q)$ with $w^{-1}(p)>w^{-1}(q)$,
where we order the genuine variables $z_{p,q}$ as in Section~\ref{subsec: hc}.
Recall from Lemma~\ref{lem: genuine zij} again that the generators $g_{i,j}$ appearing in  $J_{w,h}$ are the ones which satisfy $w^{-1}(p) >h(w^{-1}(q) )$.
This leads us to consider the following Jacobian matrix.
\begin{align}\label{eq: def of J}
\Jac\coloneqq 
\text{Jac}\Big(\{g_{i,j}\mid w^{-1}(i)>h(w^{-1}(j))\},\{z_{p,q}\mid w^{-1}(p)>w^{-1}(q)\}\Big),
\end{align} 
where the generators $g_{i,j}$ are ordered as in Section~\ref{subsec: hc} as well. This is the Jacobian matrix appearing in Proposition~\ref{theorem:Jacobian_Criterion_Hessenberg}.

In the next subsection, we will compute the rank of $\Jac$ to apply Proposition~\ref{theorem:Jacobian_Criterion_Hessenberg}.
As preparations for that, 
we take a partition of the list of the generators:
\begin{align*}
 \{g_{i,j}\mid w^{-1}(i)>h(w^{-1}(j))\}
 = \bigsqcup_{-(n-1)\le d\le n-1)} G_d ,
\end{align*} 
where we set
\begin{align*}
G_d \coloneqq \{g_{i,j}\mid w^{-1}(i)>h(w^{-1}(j)), \ i-j=d\}
\quad \text{for \ $-(n-1)\le d\le n-1$}.
\end{align*} 
For $-(n-1)\le d\le n-1$, let $\Jac_d$ be the Jacobian matrix of $G_d$, i.e., 
\begin{align*}
\Jac_d\coloneqq 
\text{Jac}\Big(\{g_{i,j}\in G_d\},\{z_{i,j}\mid w^{-1}(i)>w^{-1}(j)\}\Big).
\end{align*} 
It is clear that each $\Jac_d$ has exactly $|G_d|$ rows, and the Jacobian matrix $\Jac$ is decomposed as
\begin{align}\label{eq: decomp of J 3}
 \Jac = 
 \begin{pmatrix}
  \text{------}\ \Jac_{-(n-1)}\ \text{------} \\ \text{------}\ \Jac_{-(n-2)}\ \text{------} \\ \vdots \\ \text{--------}\ \Jac_{n-2}\ \text{--------} \\ \text{--------}\ \Jac_{n-1}\ \text{--------}
 \end{pmatrix},
\end{align} 
where we mean that $\Jac_d$ with $G_d=\emptyset$ are empty matrices.
For example, we always have $G_0=\emptyset$ since $w^{-1}(i)\not>h(w^{-1}(i))$ for all $1\le i\le n$, and hence 
$\Jac_{0}$ does not appear in this decomposition. We call $\Jac_d$ with $G_d\ne \emptyset$ a \textbf{row matrix} of $\Jac$.

Each row matrix $\Jac_d$ can be decomposed further as
\begin{align}\label{eq: decomp of Jd 2}
 \Jac_d = (\ \Jdl{d}\mid \Jdc{d}\mid \Jdr{d}\ )
 \quad \text{for \ $-(n-1)\le d\le n-1$}
\end{align} 
by defining
\begin{align*}
&\Jdl{d}\coloneqq \text{Jac}\Big(\{g_{i,j}\in G_d \},\{z_{p,q}\mid w^{-1}(p)>w^{-1}(q), p-q<d+1\}\Big), \\
&\Jdc{d}\coloneqq \text{Jac}\Big(\{g_{i,j}\in G_d \},\{z_{p,q}\mid w^{-1}(p)>w^{-1}(q), p-q=d+1\}\Big), \\
&\Jdr{d}\coloneqq \text{Jac}\Big(\{g_{i,j}\in G_d \},\{z_{p,q}\mid w^{-1}(p)>w^{-1}(q), p-q>d+1\}\Big),
\end{align*} 
where the matrices with no columns are regarded as empty matrices.
We call $\Jdc{d}$ the \textbf{central block} of $\Jac_d$.
We will use this structure of $\Jac$ to compute its rank in the next subsection.

\subsection{Jacobian matrix evaluated at permutation flags}

The objective of this subsection is to prove Theorem~\ref{thm:fixed_point_singular} below which characterizes the permutation flags that are singular points of $\Hess(N,h)$.
Recall from Section~\ref{subsec: preview} that a \textbf{lower diagonal full-string (of height $d-1$)} of $[n]\times[n]$ is the set
\[
\{(d,1), (d+1,2),\ldots, (n,n-d+1)\in[n]\times[n]\}
\]
for some $2\le d\leq n$. See for example, the black dots in the following picture gives the lower diagonal full-string of height $2$ for $n=5$.\vspace{5pt}
\[
{\unitlength 0.1in%
\begin{picture}(10.0000,10.0000)(12.0000,-18.0000)%
%
\special{pn 8}%
\special{pa 1200 800}%
\special{pa 1400 800}%
\special{pa 1400 1000}%
\special{pa 1200 1000}%
\special{pa 1200 800}%
\special{pa 1400 800}%
\special{fp}%
%
\special{pn 8}%
\special{pa 1400 800}%
\special{pa 1600 800}%
\special{pa 1600 1000}%
\special{pa 1400 1000}%
\special{pa 1400 800}%
\special{pa 1600 800}%
\special{fp}%
%
\special{pn 8}%
\special{pa 1600 800}%
\special{pa 1800 800}%
\special{pa 1800 1000}%
\special{pa 1600 1000}%
\special{pa 1600 800}%
\special{pa 1800 800}%
\special{fp}%
%
\special{pn 8}%
\special{pa 1800 800}%
\special{pa 2000 800}%
\special{pa 2000 1000}%
\special{pa 1800 1000}%
\special{pa 1800 800}%
\special{pa 2000 800}%
\special{fp}%
%
\special{pn 8}%
\special{pa 2000 800}%
\special{pa 2200 800}%
\special{pa 2200 1000}%
\special{pa 2000 1000}%
\special{pa 2000 800}%
\special{pa 2200 800}%
\special{fp}%
%
\special{pn 8}%
\special{pa 1200 1000}%
\special{pa 1400 1000}%
\special{pa 1400 1200}%
\special{pa 1200 1200}%
\special{pa 1200 1000}%
\special{pa 1400 1000}%
\special{fp}%
%
\special{pn 8}%
\special{pa 1400 1000}%
\special{pa 1600 1000}%
\special{pa 1600 1200}%
\special{pa 1400 1200}%
\special{pa 1400 1000}%
\special{pa 1600 1000}%
\special{fp}%
%
\special{pn 8}%
\special{pa 1600 1000}%
\special{pa 1800 1000}%
\special{pa 1800 1200}%
\special{pa 1600 1200}%
\special{pa 1600 1000}%
\special{pa 1800 1000}%
\special{fp}%
%
\special{pn 8}%
\special{pa 1800 1000}%
\special{pa 2000 1000}%
\special{pa 2000 1200}%
\special{pa 1800 1200}%
\special{pa 1800 1000}%
\special{pa 2000 1000}%
\special{fp}%
%
\special{pn 8}%
\special{pa 2000 1000}%
\special{pa 2200 1000}%
\special{pa 2200 1200}%
\special{pa 2000 1200}%
\special{pa 2000 1000}%
\special{pa 2200 1000}%
\special{fp}%
%
\special{pn 8}%
\special{pa 1200 1200}%
\special{pa 1400 1200}%
\special{pa 1400 1400}%
\special{pa 1200 1400}%
\special{pa 1200 1200}%
\special{pa 1400 1200}%
\special{fp}%
%
\special{pn 8}%
\special{pa 1400 1200}%
\special{pa 1600 1200}%
\special{pa 1600 1400}%
\special{pa 1400 1400}%
\special{pa 1400 1200}%
\special{pa 1600 1200}%
\special{fp}%
%
\special{pn 8}%
\special{pa 1600 1200}%
\special{pa 1800 1200}%
\special{pa 1800 1400}%
\special{pa 1600 1400}%
\special{pa 1600 1200}%
\special{pa 1800 1200}%
\special{fp}%
%
\special{pn 8}%
\special{pa 1800 1200}%
\special{pa 2000 1200}%
\special{pa 2000 1400}%
\special{pa 1800 1400}%
\special{pa 1800 1200}%
\special{pa 2000 1200}%
\special{fp}%
%
\special{pn 8}%
\special{pa 2000 1200}%
\special{pa 2200 1200}%
\special{pa 2200 1400}%
\special{pa 2000 1400}%
\special{pa 2000 1200}%
\special{pa 2200 1200}%
\special{fp}%
%
\special{pn 8}%
\special{pa 1200 1400}%
\special{pa 1400 1400}%
\special{pa 1400 1600}%
\special{pa 1200 1600}%
\special{pa 1200 1400}%
\special{pa 1400 1400}%
\special{fp}%
%
\special{pn 8}%
\special{pa 1400 1400}%
\special{pa 1600 1400}%
\special{pa 1600 1600}%
\special{pa 1400 1600}%
\special{pa 1400 1400}%
\special{pa 1600 1400}%
\special{fp}%
%
\special{pn 8}%
\special{pa 1600 1400}%
\special{pa 1800 1400}%
\special{pa 1800 1600}%
\special{pa 1600 1600}%
\special{pa 1600 1400}%
\special{pa 1800 1400}%
\special{fp}%
%
\special{pn 8}%
\special{pa 1800 1400}%
\special{pa 2000 1400}%
\special{pa 2000 1600}%
\special{pa 1800 1600}%
\special{pa 1800 1400}%
\special{pa 2000 1400}%
\special{fp}%
%
\special{pn 8}%
\special{pa 2000 1400}%
\special{pa 2200 1400}%
\special{pa 2200 1600}%
\special{pa 2000 1600}%
\special{pa 2000 1400}%
\special{pa 2200 1400}%
\special{fp}%
%
\special{pn 8}%
\special{pa 1200 1600}%
\special{pa 1400 1600}%
\special{pa 1400 1800}%
\special{pa 1200 1800}%
\special{pa 1200 1600}%
\special{pa 1400 1600}%
\special{fp}%
%
\special{pn 8}%
\special{pa 1400 1600}%
\special{pa 1600 1600}%
\special{pa 1600 1800}%
\special{pa 1400 1800}%
\special{pa 1400 1600}%
\special{pa 1600 1600}%
\special{fp}%
%
\special{pn 8}%
\special{pa 1600 1600}%
\special{pa 1800 1600}%
\special{pa 1800 1800}%
\special{pa 1600 1800}%
\special{pa 1600 1600}%
\special{pa 1800 1600}%
\special{fp}%
%
\special{pn 8}%
\special{pa 1800 1600}%
\special{pa 2000 1600}%
\special{pa 2000 1800}%
\special{pa 1800 1800}%
\special{pa 1800 1600}%
\special{pa 2000 1600}%
\special{fp}%
%
\special{pn 8}%
\special{pa 2000 1600}%
\special{pa 2200 1600}%
\special{pa 2200 1800}%
\special{pa 2000 1800}%
\special{pa 2000 1600}%
\special{pa 2200 1600}%
\special{fp}%
%
\special{sh 1.000}%
\special{ia 1700 1700 30 30 0.0000000 6.2831853}%
\special{pn 8}%
\special{ar 1700 1700 30 30 0.0000000 6.2831853}%
%
\special{sh 1.000}%
\special{ia 1300 1300 30 30 0.0000000 6.2831853}%
\special{pn 8}%
\special{ar 1300 1300 30 30 0.0000000 6.2831853}%
%
\special{sh 1.000}%
\special{ia 1500 1500 30 30 0.0000000 6.2831853}%
\special{pn 8}%
\special{ar 1500 1500 30 30 0.0000000 6.2831853}%
\end{picture}}%
\vspace{5pt}
\] 
Recall also that $\hc{w}\subseteq[n]\times[n]$ is the \HC\ defined in Section~\ref{subsec: hc}.
As we stated in the beginning of Section~\ref{subsec: local description}, 
we are assuming $h(i)\geq i+1$ for all $1\leq i<n$ in this paper.

\begin{theorem} \label{thm:fixed_point_singular}
A permutation flag $w_{\bullet}\in \Hess(N,h)$ is a singular point of $\Hess(N,h)$ if and only if the Hessenberg complement $\hc{w}$ contains a lower diagonal full-string.
\end{theorem}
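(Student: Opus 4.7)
The plan is to compute $\rank \Jac(w_\bullet)$ and apply Proposition~\ref{theorem:Jacobian_Criterion_Hessenberg}. The crucial structural observation is that $\Jac(w_\bullet)$ is \emph{block diagonal} with diagonal blocks $\Jdc{d}(w_\bullet)$. Indeed, every genuine variable $z_{p,q}$ vanishes at $w_\bullet$, so the partial derivative $\frac{\partial g_{i,j}}{\partial z_{p,q}}(w_\bullet)$ is simply the coefficient of $z_{p,q}$ in $g_{i,j}$. By Proposition~\ref{prop: linear term} the only linear terms of $g_{i,j}$ are $-z_{i,j-1} + z_{i+1,j}$, and both of their indexes lie on the diagonal $p-q = (i-j)+1$. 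Thus every non-zero entry in the row of $g_{i,j}$ lies in the columns of $\Jdc{d}$ for $d = i-j$, and since distinct $d$'s give disjoint column sets we obtain
\[
\rank \Jac(w_\bullet) = \sum_{d} \rank \Jdc{d}(w_\bullet).
\]
So $w_\bullet$ is singular if and only if $\rank \Jdc{d}(w_\bullet) < |G_d|$ for some $d$.

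Next I would interpret each $\Jdc{d}(w_\bullet)$ as a signed incidence matrix. By Proposition~\ref{prop: a partial result 2}, the row of $g_{i,j}$ carries $-1$ at column $(i,j-1)$ when $j \ge 2$ and $+1$ at column $(i+1,j)$ when $i \le n-1$; these boundary positions are always genuine by the last part of Proposition~\ref{prop: linear term}. Viewing positions on the diagonal $\{p-q = d+1\}$ as ``vertices'' and rows in $G_d$ as ``edges'', $\Jdc{d}(w_\bullet)$ becomes the signed incidence matrix of a graph in which the rows hitting the boundary cases $j=1$ or $i=n$ appear as half-edges.

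The final step is a case analysis in $d$. For $d \ge 1$, the diagonal $\{i-j = d\}$ has $n-d$ positions while the diagonal $\{p-q = d+1\}$ has only $n-d-1$ positions; when $G_d$ fills the former diagonal (i.e.\ when $\hc{w}$ contains the lower diagonal full-string of height $d$) the two boundary rows are half-edges, the sum of all rows telescopes to zero, and $\rank \Jdc{d}(w_\bullet) \le n-d-1 < |G_d|$. When $G_d$ is a proper subset, the missing positions split $G_d$ into maximal contiguous runs, each of whose associated submatrix is bidiagonal with $\pm 1$ on the main diagonal (hence invertible), and these runs involve pairwise disjoint column sets, so $\Jdc{d}(w_\bullet)$ has full row rank. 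The cases $d = 0$ and $d = -1$ are vacuous since $G_0 = G_{-1} = \emptyset$ (the latter from Lemma~\ref{lem: permutation flags in Hess 2}). For $d \le -2$, every row of $G_d$ uses both of its linear terms as honest genuine variables, so no half-edges arise and the resulting path-graph incidence matrix always has full row rank, regardless of how much of $\{i-j=d\}$ is contained in $\hc{w}$. Combining these observations yields the theorem; the main obstacle is the careful boundary bookkeeping that accounts for the asymmetry between the $d \ge 1$ and $d \le -1$ cases.
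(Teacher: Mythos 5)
Your proposal follows essentially the same route as the paper's proof: the Jacobian criterion, the observation that at $w_{\bullet}$ only the linear terms $-z_{i,j-1}+z_{i+1,j}$ survive so that $\Jac|_{w_{\bullet}}$ splits into the central blocks $\Jdc{d}|_{w_{\bullet}}$ on disjoint column sets, the decomposition of each diagonal into maximal runs, and the boundary case analysis at $j=1$ and $i=n$ distinguishing $d>0$ from $d<0$. Your signed-incidence-matrix language and the row-versus-column count ($n-d$ rows against at most $n-d-1$ columns) for the full-string case are just a compact repackaging of the paper's explicit matrices $A_{d,k}$, $B_{d,k}$, $C_{d,k}$, $D_d$, so the argument is correct and not materially different.
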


\begin{proof}
To begin with, we note that we have $w_{\bullet}\in \Hess(N,h)\cap wU^-$ which means that we can apply the results obtained in Section~\ref{sec: Linear terms}.
By the Jacobian criterion (Proposition~\ref{theorem:Jacobian_Criterion_Hessenberg}) and the definition \eqref{eq: def of J} of the Jacobian matrix $\Jac$, it suffices to show that the matrix $\Jac$ evaluated at the permutation flag $w_{\bullet}$ fails to have its proper full rank $|\hc{w}|$ (the number of rows of $\Jac$) if and only if $\hc{w}$ contains a lower diagonal full-string.

For a function $F\in \C[{\bf x}_w]$, we denote by $F|_{w_{\bullet}}\in\C$ the evaluation of $F$ at $w_{\bullet}$.
This evaluation is simply given by $z_{i,j}=0$ for all genuine variables $z_{i,j}$.
Hence, to compute the evaluated Jacobian matrix $\Jac|_{w_{\bullet}}$, it suffices to focus on the linear terms of the generators $g_{i,j}$.
Recall that we have the decomposition of $\Jac$ described in \eqref{eq: decomp of J 3} and \eqref{eq: decomp of Jd 2}.
For a row matrix $\Jac_d$ of $\Jac$, 
Proposition~\ref{prop: linear term}
 implies that the (possibly empty) blocks $\Jdl{d}$ and $\Jdr{d}$ in  \eqref{eq: decomp of Jd 2} are the zero matrices  when evaluated at $w_{\bullet}$ :
\begin{align}\label{eq: decomp of J 2}
\Jac_d|_{w_{\bullet}}=(\ \mathbf{0} \mid \Jdc{d}|_{w_{\bullet}} \mid \mathbf{0} \ ).
\end{align} 
In other words, the components of a row matrix $\Jac_d|_{w_{\bullet}}$ are all zero except for the central block $\Jdc{d}|_{w_{\bullet}}$. Moreover, for two distinct row matrices $\Jac_d|_{w_{\bullet}}$ and $\Jac_{d'}|_{w_{\bullet}}$ with $d< d'$, the indexes of columns of $\Jdc{d}|_{w_{\bullet}}$ are less than the indexes of columns of $\Jdc{d'}|_{w_{\bullet}}$ since we have $z_{p,q}<z_{p',q'}$ for all $p-q=d+1$ and $p'-q'=d'+1$.
Namely, $\Jdc{d}|_{w_{\bullet}}$ is located in the upper left to $\Jdc{d'}|_{w_{\bullet}}$ in the entire matrix $\Jac|_{w_{\bullet}}$.
This and \eqref{eq: decomp of J 2} together imply that the matrix $\Jac|_{w_{\bullet}}$ fails to have the full rank $|\hc{w}|$ (the number of rows of $\Jac|_{w_{\bullet}}$) if and only there exists a row matrix $\Jac_d|_{w_{\bullet}}$ such that its central block $\Jdc{d}|_{w_{\bullet}}$ fails to have its proper full rank $|G_d|$ (the number of rows of $\Jdc{d}|_{w_{\bullet}}$).

In what follows, we focus on the central block $\Jdc{d}|_{w_{\bullet}}$ of each row matrix $\Jac_d$, and we show that $\Jdc{d}|_{w_{\bullet}}$ fails to have its proper full rank $|G_d|$ if and only if $d>0$ and $\hc{w}$ contains a lower diagonal full-string of height $d$. 
To see this, we think of each generator $g_{i,j}$ in $G_d$ to lie on the $(i,j)$-th position of $\hc{w}\subseteq[n]\times[n]$ (c.f.\ Example~\ref{eg: locations of gij}); if $d<0$ then they are lined on the $|d|$-th upper diagonal line in $[n]\times[n]$, and if $d>0$ then they are lined on the $d$-th lower diagonal line in $[n]\times[n]$.
With this picture in mind, we take a partition of $G_d$ into maximal strings of generators, that is, we write
\begin{align}\label{eq: decomp of generators}
G_d=G_d^{(1)}\sqcup\cdots \sqcup G_d^{(r)},
\end{align} 
where each $G_d^{(k)}$ $(1\le k\le r)$ consists of a maximal consecutive string of generators $g_{a,b},g_{a+1,b+1},\ldots,g_{a+\ell,b+\ell}$ in $G_d$ for some length $\ell+1$ ($\ell\ge0$) which lie as
\begin{align}\label{eq: string of generators}
\begin{matrix}
g_{a,b} &  &     &  \\
    & g_{a+1,b+1} &   &  \\
    &     & \ddots &  \\
    &     &     & g_{a+\ell,b+\ell} \\ 
\end{matrix}
\end{align} 
on the $|d|$-th upper or lower diagonal line in $\hc{w}\subseteq [n]\times[n]$.
To determine each $G_d^{(k)}$ in \eqref{eq: decomp of generators} uniquely, we require that the indexes of generators in $G_d^{(k)}$ are less than the indexes of generators in $G_d^{(k')}$ when $k<k'$.
Since the elements of $G_d$ stands for the rows of the central block $\Jdc{d}|_{w_{\bullet}}$, the partition \eqref{eq: decomp of generators} induces a decomposition of $\Jdc{d}|_{w_{\bullet}}$ as follows.
Let $\Jdc{d}^{(k)}|_{w_{\bullet}}\ (1\le k\le r)$ be the Jacobian matrix for a maximal string $G_d^{(k)}$ in \eqref{eq: decomp of generators} with respect to the genuine variables $z_{p,q}$ with $p-q=d+1$.
Then the central block $\Jdc{d}|_{w_{\bullet}}$ is decomposed further as
\begin{align}\label{eq: dec of Jd}
 \Jdc{d}|_{w_{\bullet}} = 
 \begin{pmatrix}
  \text{------}\ \Jdc{d}^{(1)}|_{w_{\bullet}}\ \text{------} \\ \text{------}\ \Jdc{d}^{(2)}|_{w_{\bullet}}\ \text{------} \\ \vdots \\ \text{--------}\ \Jdc{d}^{(r)}|_{w_{\bullet}}\ \text{--------} 
 \end{pmatrix}.
\end{align} 
To compute each $\Jdc{d}^{(k)}|_{w_{\bullet}}\ (1\le k\le r)$ in \eqref{eq: dec of Jd}, 
take a component $G_d^{(k)}$ consisting of a maximal string \eqref{eq: string of generators}.
For $g_{i,j}\in G_d^{(k)}$, its linear terms are 
\begin{align}\label{eq: linear of gij}
 - z_{i,j-1} + z_{i+1,j}
\end{align}
by Proposition~\ref{prop: linear term}.
These variables are not necessarily genuine variables depending on $i\in[a,a+\ell]$ and $j\in[b,b+\ell]$; if $i=n$ then $z_{i+1,j}(=0)$ is not a genuine variable, and if $j=1$ then $z_{i,j-1}(=0)$ is not a genuine variable.

We first consider the case $d<0$.
In this case, the string \eqref{eq: string of generators} lines on the $|d|$-th upper diagonal line in $\hc{w}\subseteq [n]\times[n]$ which means that we have $b>1$ and $a+\ell<n$ in \eqref{eq: string of generators}.
Thus, for all $g_{i,j}\in G_d^{(k)}$, the linear terms in \eqref{eq: linear of gij} are genuine variables, and it follows that each row-block $\Jdc{d}^{(k)}|_{w_{\bullet}}$ in \eqref{eq: dec of Jd} takes of the form
\begin{align}\label{eq: further decomp}
\Jdc{d}^{(k)}|_{w_{\bullet}} = (\ \bm{0}\mid A_{d,k}\mid \bm{0}\ )
\qquad (1\le k\le r),
\end{align} 
where $A_{d,k}$ is the $|G_d^{(k)}|\times (|G_d^{(k)}|+1)$-matrix given by
\begin{align*}
A_{d,k}=
\begin{pmatrix}
  -1 & 1  &     &   & \\
  & -1 & 1  & & \\
   &     & \ddots & \ddots & \\
   &     &            & -1 & 1 
\end{pmatrix}
\qquad \text{(with all zeros on the empty spots).}
\end{align*}
We note that the zero matrices in \eqref{eq: further decomp} are possibly empty depending on 
the set of genuine variables $z_{p,q}$ with $p-q=d+1$ which stands for the columns of the row-block $\Jdc{d}^{(k)}|_{w_{\bullet}}$.
For two distinct row-blocks $\Jdc{d}^{(k)}|_{w_{\bullet}}$ and $\Jdc{d}^{(k')}|_{w_{\bullet}}$ with $k<k'$ in \eqref{eq: dec of Jd}, 
$A_{d,k}$ is located in the upper-left to $A_{d,k'}$ in $\Jdc{d}|_{w_{\bullet}}$.
This follows since 
the linear terms of generators in $G_d^{(k)}$ are less than the linear terms of generators in $G_d^{(k')}$ because of the definition of the partition \eqref{eq: decomp of generators}.
This together with $\rank A_{d,k}=|G_d^{(k)}|$ for all $k$ imply that $\Jdc{d}|_{w_{\bullet}}$ has its proper full rank $|G_d|$ for the case $d<0$.

Recalling that $G_0=\emptyset$ as mentioned in Section~\ref{subsec: Jacobi}, we next consider the case $d>0$.
In this case, the elements of $G_{d}$ are lined on the $d$-th lower diagonal line in $\hc{w}\subseteq [n]\times[n]$.
We first consider the case that $G_d$ is not the full-string of the maximum length (i.e.\ $G_d$ is not the string starting from the first column and ending at the bottom row), and we show that the central block $\Jdc{d}|_{w_{\bullet}}$ has its proper full rank $|G_d|$ in that case.
To this end, take a component $G_d^{(k)}$ of $G_d$ in \eqref{eq: decomp of generators} which consists of the string \eqref{eq: string of generators}.
Then $G_d^{(k)}$ cannot be the full-string of the maximum length as $G_d$ is not.
Hence, we have either of the following three conditions for the string \eqref{eq: string of generators};
\begin{itemize}
 \item[(i)] $b> 1$ and $a+\ell< n$,
 \item[(ii)] $b= 1$ and $a+\ell< n$,
 \item[(iii)] $b> 1$ and $a+\ell= n$
\end{itemize}
since if $b=1$ and $a+\ell= n$ at the same time then $G_d^{(k)}$ must be the full-string of the maximum length.
For the case (i), each $g_{i,j}\in G_d^{(k)}$ satisfies $j>1$ and $i<n$
so that both variables in the linear terms \eqref{eq: linear of gij} of $g_{i,j}$ are genuine variables (see Proposition~\ref{prop: linear term}).
Hence, 
the row-block $\Jdc{d}^{(k)}|_{w_{\bullet}}$ in \eqref{eq: dec of Jd} takes of the form 
\begin{align*}
\Jdc{d}^{(k)}|_{w_{\bullet}} = (\ \bm{0}\mid A_{d,k}\mid \bm{0}\ )
\end{align*} 
as above. 
For the case (ii), we have $G_d^{(k)}=\{g_{a,1},g_{a+1,2},\ldots, g_{a+\ell,1+\ell}\}$ with $a+\ell<n$. 
Except for the first element $g_{a,1}$, the linear terms \eqref{eq: linear of gij} of each $g_{i,j}\in G_d^{(k)}$ are genuine variables. The linear terms of $g_{a,1}$ is $-z_{a,0} + z_{a+1,1}$, where $z_{a,0}(=0)$ is not a genuine variable, and $z_{a+1,1}$ is a genuine variable since its index satisfies $a+1\le a+1+\ell\le n$. 
In particular, the former variable does not represent a column of the central block $\Jdc{d}|_{w_{\bullet}}$.
Hence, we have
\begin{align*}
\Jdc{d}^{(k)}|_{w_{\bullet}} = (\ \bm{0}\mid B_{d,k}\mid \bm{0}\ ),
\end{align*} 
where  $B_{d,k}$ is the $|G_d^{(k)}|\times |G_d^{(k)}|$-matrix given by
\begin{align*}
B_{d,k}=
\begin{pmatrix}
  1  &     &    & & \\
   -1 & 1  & & & \\
       & -1 & 1 & \\
       &     & \ddots & \ddots & \\
       &     &            & -1 & 1 
\end{pmatrix}.
\end{align*}
For the case (iii), we have $G_d^{(k)}=\{g_{a,b},g_{a+1,b+1},\ldots, g_{n,b+\ell}\}$ with $b>1$. 
Except for the last element $g_{n,b+\ell}$, the linear terms \eqref{eq: linear of gij} of each $g_{i,j}\in G_d^{(k)}$ are genuine variables. The linear terms of $g_{n,b+\ell}$ is $- z_{n,b+\ell-1} + z_{n+1,b+\ell}$, where $z_{n,b+\ell-1}$ is a genuine variable since its index satisfies $b+\ell-1\ge b-1\ge1$, and $z_{n+1,b+\ell}(=0)$ is not.
Similarly, we have
\begin{align*}
\Jdc{d}^{(k)}|_{w_{\bullet}} = (\ \bm{0}\mid C_{d,k}\mid \bm{0}\ ),
\end{align*} 
where  $C_{d,k}$ is the $|G_d^{(k)}|\times |G_d^{(k)}|$-matrix given by
\begin{align*}
C_{d,k}=
\begin{pmatrix}
 -1 & 1  & & & \\
     & -1 & 1 & \\
     &     & \ddots & \ddots & \\
     &     &            & -1 & 1 \\
     &     &            &     & -1  \\
\end{pmatrix}.
\end{align*}
Combining cases (i), (ii), and (iii), we obtain the following conclusion; 
when $G_d$ is not the full-string of the maximal length, each row-block $\Jdc{d}^{(k)}|_{w_{\bullet}}$ in \eqref{eq: dec of Jd} has its proper full rank $|G_d^{(k)}|$. 
Moreover, by the same argument as that in the case $d<0$, one can see that the non-zero columns of $\Jdc{d}^{(k)}|_{w_{\bullet}}$ is located in the upper-left to the non-zero columns of $\Jdc{d}^{(k')}|_{w_{\bullet}}$ if $k<k'$.
Thus, $\Jdc{d}|_{w_{\bullet}}$ has its proper full rank $|G_d|$ in this case.

Finally, we consider the case for which $G_d$ is the full-string of the maximum length.
This is precisely the case that the Hessenberg complement $\hc{w}$ contains a lower diagonal full-string of height $d$.  If $|G_d|=1$, the unique element $g_{n,1} \in G_d$ has no linear terms by Proposition~\ref{prop: linear term}, and $ \Jdc{d}|_{w_{\bullet}}$ consists of a single row full of zeros. Thus $\Jdc{d}|_{w_{\bullet}}$ fails to have its proper full rank $|G_d|=1$. 
If $|G_d|>1$, the arguments for the cases (ii) and (iii) above imply that we have
\begin{align}\label{eq: further decomp 4}
\Jdc{d}|_{w_{\bullet}}= (\ \bm{0}\mid D_{d}\mid \bm{0}\ ),
\end{align} 
where  $D_{d}$ is the $(|G_d|-1)\times |G_d|$-matrix given by
\begin{align*}
D_{d}=
\begin{pmatrix}
  1  & & & \\
    -1 & 1 & \\
        & \ddots & \ddots & \\
        &            & -1 & 1  \\
        &            &     & -1 \\
\end{pmatrix}.
\end{align*}
Hence, $\Jdc{d}|_{w_{\bullet}}$ fails to have its proper full rank $|G_d|$ for the case $|G_d|>1$ as well.
Therefore, we conclude that $\Jdc{d}|_{w_{\bullet}}$ fails to have its proper full rank $|G_d|$ when  the Hessenberg complement $\hc{w}$ contains a lower diagonal full-string of height $d$, and this is the only such case. 
This completes the proof.
\end{proof}

\begin{example}
Let $n=6$, and $h=(3,4,4,5,6,6)$. 
Take $w = 564321$ in one-line notation.
We saw in Example~\ref{ex:linear term} that $w_{\bullet}\in \Hess(N,h)$.
From Example~\ref{eg: locations of gij}, it is clear that the Hessenberg complement $\hc{w}$ does not contain a lower-left full string.
Hence, Theorem~\ref{thm:fixed_point_singular} claims that the permutation flag $w_{\bullet}$ is a nonsingular point of $\Hess(N,h)$.
In fact, as we have seen in Example~\ref{eg: locations of gij}, the functions
$g_{1,6}$, $g_{1,5}$, $g_{2,6}$, $g_{1,4}$, $g_{2,5}$, $g_{1,3}$, $g_{2,4}$, $g_{3,5}$ (ordered as in Example~\ref{ex: order}) generate the ideal $J_{w,h}$. A direct computation shows that Jacobian matrix $\Jac|_{w_{\bullet}}$ is given by
\[
{\unitlength 0.1in%
\begin{picture}(51.3500,21.2700)(12.0000,-26.6900)%
\put(12.0000,-11.6200){\makebox(0,0)[lb]{$g_{1,5}$}}%
\put(12.0000,-9.1600){\makebox(0,0)[lb]{$g_{1,6}$}}%
\put(12.0000,-23.8400){\makebox(0,0)[lb]{$g_{2,4}$}}%
\put(12.0000,-16.5000){\makebox(0,0)[lb]{$g_{1,4}$}}%
\put(12.0000,-18.9400){\makebox(0,0)[lb]{$g_{2,5}$}}%
\put(12.0000,-14.0600){\makebox(0,0)[lb]{$g_{2,6}$}}%
\put(12.0000,-21.4000){\makebox(0,0)[lb]{$g_{1,3}$}}%
\put(12.0000,-26.2800){\makebox(0,0)[lb]{$g_{3,5}$}}%
\put(47.8600,-6.7200){\makebox(0,0)[lb]{$z_{1,2}$}}%
\put(38.6500,-16.4200){\makebox(0,0)[lb]{$1$}}%
\put(51.1200,-6.7200){\makebox(0,0)[lb]{$z_{2,3}$}}%
\put(15.2600,-6.7200){\makebox(0,0)[lb]{$z_{1,6}$}}%
\put(38.0800,-6.7200){\makebox(0,0)[lb]{$z_{2,4}$}}%
\put(31.5600,-6.7200){\makebox(0,0)[lb]{$z_{3,6}$}}%
\put(54.3800,-6.7200){\makebox(0,0)[lb]{$z_{3,4}$}}%
\put(18.5200,-6.7200){\makebox(0,0)[lb]{$z_{1,5}$}}%
\put(41.3400,-6.7200){\makebox(0,0)[lb]{$z_{3,5}$}}%
\put(57.6400,-6.7200){\makebox(0,0)[lb]{$z_{4,5}$}}%
\put(21.7800,-6.7200){\makebox(0,0)[lb]{$z_{2,6}$}}%
\put(60.9000,-6.7200){\makebox(0,0)[lb]{$z_{6,5}$}}%
\put(25.0400,-6.7200){\makebox(0,0)[lb]{$z_{1,4}$}}%
\put(44.6000,-6.7200){\makebox(0,0)[lb]{$z_{4,6}$}}%
\put(34.8200,-6.7200){\makebox(0,0)[lb]{$z_{1,3}$}}%
\put(28.3000,-6.7200){\makebox(0,0)[lb]{$z_{2,5}$}}%
\put(28.3000,-14.0600){\makebox(0,0)[lb]{$-1$}}%
\put(18.5200,-9.1600){\makebox(0,0)[lb]{$-1$}}%
%
\special{pn 8}%
\special{pa 1200 974}%
\special{pa 6335 974}%
\special{fp}%
%
\special{pn 8}%
\special{pa 1200 1462}%
\special{pa 6335 1462}%
\special{fp}%
%
\special{pn 8}%
\special{pa 1200 1218}%
\special{pa 6335 1218}%
\special{dt 0.045}%
%
\special{pn 8}%
\special{pa 1200 1952}%
\special{pa 6335 1952}%
\special{fp}%
%
\special{pn 8}%
\special{pa 1200 2441}%
\special{pa 6335 2441}%
\special{dt 0.045}%
%
\special{pn 8}%
\special{pa 1200 1708}%
\special{pa 6335 1708}%
\special{dt 0.045}%
%
\special{pn 8}%
\special{pa 1200 2196}%
\special{pa 6335 2196}%
\special{dt 0.045}%
%
\special{pn 8}%
\special{pa 1795 550}%
\special{pa 1795 2669}%
\special{fp}%
%
\special{pn 8}%
\special{pa 2121 550}%
\special{pa 2121 2669}%
\special{dt 0.045}%
%
\special{pn 8}%
\special{pa 2447 550}%
\special{pa 2447 2669}%
\special{fp}%
%
\special{pn 8}%
\special{pa 2773 550}%
\special{pa 2773 2669}%
\special{dt 0.045}%
%
\special{pn 8}%
\special{pa 3099 550}%
\special{pa 3099 2669}%
\special{dt 0.045}%
%
\special{pn 8}%
\special{pa 1200 730}%
\special{pa 6335 730}%
\special{fp}%
%
\special{pn 8}%
\special{pa 3425 550}%
\special{pa 3425 2669}%
\special{fp}%
%
\special{pn 8}%
\special{pa 3751 550}%
\special{pa 3751 2669}%
\special{dt 0.045}%
%
\special{pn 8}%
\special{pa 4077 550}%
\special{pa 4077 2669}%
\special{dt 0.045}%
%
\special{pn 8}%
\special{pa 4403 550}%
\special{pa 4403 2669}%
\special{dt 0.045}%
%
\special{pn 8}%
\special{pa 4729 550}%
\special{pa 4729 2669}%
\special{fp}%
%
\special{pn 8}%
\special{pa 5055 550}%
\special{pa 5055 2669}%
\special{dt 0.045}%
%
\special{pn 8}%
\special{pa 5381 550}%
\special{pa 5381 2669}%
\special{dt 0.045}%
%
\special{pn 8}%
\special{pa 5707 550}%
\special{pa 5707 2669}%
\special{dt 0.045}%
%
\special{pn 8}%
\special{pa 6033 550}%
\special{pa 6033 2669}%
\special{fp}%
%
\special{pn 8}%
\special{pa 1469 550}%
\special{pa 1469 2669}%
\special{fp}%
\put(25.0400,-11.6200){\makebox(0,0)[lb]{$-1$}}%
\put(28.8700,-11.5300){\makebox(0,0)[lb]{$1$}}%
\put(22.3500,-9.0800){\makebox(0,0)[lb]{$1$}}%
\put(32.1300,-13.9800){\makebox(0,0)[lb]{$1$}}%
\put(34.8200,-16.5000){\makebox(0,0)[lb]{$-1$}}%
\put(41.9100,-18.8600){\makebox(0,0)[lb]{$1$}}%
\put(38.0800,-18.9400){\makebox(0,0)[lb]{$-1$}}%
\put(41.9100,-18.8600){\makebox(0,0)[lb]{$1$}}%
\put(38.0800,-18.9400){\makebox(0,0)[lb]{$-1$}}%
\put(51.6900,-21.3100){\makebox(0,0)[lb]{$1$}}%
\put(47.8600,-21.4000){\makebox(0,0)[lb]{$-1$}}%
\put(51.6900,-21.3100){\makebox(0,0)[lb]{$1$}}%
\put(47.8600,-21.4000){\makebox(0,0)[lb]{$-1$}}%
\put(54.9500,-23.7600){\makebox(0,0)[lb]{$1$}}%
\put(51.1200,-23.8400){\makebox(0,0)[lb]{$-1$}}%
\put(54.9500,-23.7600){\makebox(0,0)[lb]{$1$}}%
\put(51.1200,-23.8400){\makebox(0,0)[lb]{$-1$}}%
\put(58.2100,-26.2000){\makebox(0,0)[lb]{$1$}}%
\put(54.3800,-26.2800){\makebox(0,0)[lb]{$-1$}}%
\put(58.2100,-26.2000){\makebox(0,0)[lb]{$1$}}%
\put(54.3800,-26.2800){\makebox(0,0)[lb]{$-1$}}%
\end{picture}}%
\]
where we omit the zeros on the empty spots.
The solid lines represents the decomposition of the rows and columns which have the same difference of indexes. Namely, the matrix $\Jac|_{w_{\bullet}}$ is decomposed into the row matrices $\Jac_{d}|_{w_{\bullet}}$ for $d=-5,-4,-3,-2$, and the central blocks $\Jdc{d}|_{w_{\bullet}}$ are precisely the non-zero matrices surrounded by solid lines. 
One sees that the rank of the Jacobian matrix $\Jac|_{w_{\bullet}}$ is $8$ as expected.
\end{example}
\begin{example}
Let $n=6$, and $h=(3,4,4,5,6,6)$. 
Take $w=312654$ in one-line notation.
From Example~\ref{eg: locations of gij 2}, the Hessenberg complement $\hc{w}$ contains the lower-left full string of height $3$.
Hence, Theorem~\ref{thm:fixed_point_singular} claims that the permutation flag $w_{\bullet}$ is a singular point of $\Hess(N,h)$.
In fact, as we have seen in Example~\ref{eg: locations of gij 2}, the functions
$g_{4,6}$, $g_{4,3}$, $g_{4,2}$, $g_{5,3}$, $g_{4,1}$, $g_{5,2}$, $g_{6,3}$, $g_{5,1}$ generate the ideal $J_{w,h}$, where the positions of $g_{4,1}$, $g_{5,2}$, $g_{6,3}\in G_{3}$ consist a lower diagonal full-string of height 3 in $\hc{w}$.
Again, a direct computation shows that the Jacobian matrix $\Jac|_{w_{\bullet}}$ is the following.
\[
{\unitlength 0.1in%
\begin{picture}(51.3500,21.2700)(12.0000,-26.6900)%
\put(12.0000,-11.6200){\makebox(0,0)[lb]{$g_{4,3}$}}%
\put(12.0000,-9.1600){\makebox(0,0)[lb]{$g_{4,6}$}}%
\put(12.0000,-23.8400){\makebox(0,0)[lb]{$g_{6,3}$}}%
\put(12.0000,-16.5000){\makebox(0,0)[lb]{$g_{5,3}$}}%
\put(12.0000,-18.9400){\makebox(0,0)[lb]{$g_{4,1}$}}%
\put(12.0000,-14.0600){\makebox(0,0)[lb]{$g_{4,2}$}}%
\put(12.0000,-21.4000){\makebox(0,0)[lb]{$g_{5,2}$}}%
\put(12.0000,-26.2800){\makebox(0,0)[lb]{$g_{5,1}$}}%
\put(41.3400,-6.7200){\makebox(0,0)[lb]{$z_{5,3}$}}%
\put(51.6900,-16.4200){\makebox(0,0)[lb]{$1$}}%
\put(44.6000,-6.7200){\makebox(0,0)[lb]{$z_{4,1}$}}%
\put(31.5600,-6.7200){\makebox(0,0)[lb]{$z_{2,1}$}}%
\put(25.0400,-6.7200){\makebox(0,0)[lb]{$z_{4,5}$}}%
\put(47.8600,-6.7200){\makebox(0,0)[lb]{$z_{5,2}$}}%
\put(34.8200,-6.7200){\makebox(0,0)[lb]{$z_{4,3}$}}%
\put(51.1200,-6.7200){\makebox(0,0)[lb]{$z_{6,3}$}}%
\put(15.2600,-6.7200){\makebox(0,0)[lb]{$z_{1,3}$}}%
\put(57.6400,-6.7200){\makebox(0,0)[lb]{$z_{6,2}$}}%
\put(18.5200,-6.7200){\makebox(0,0)[lb]{$z_{4,6}$}}%
\put(38.0800,-6.7200){\makebox(0,0)[lb]{$z_{4,2}$}}%
\put(28.3000,-6.7200){\makebox(0,0)[lb]{$z_{5,6}$}}%
\put(21.7800,-6.7200){\makebox(0,0)[lb]{$z_{2,3}$}}%
\put(44.6000,-14.0600){\makebox(0,0)[lb]{$-1$}}%
\put(25.0400,-9.1600){\makebox(0,0)[lb]{$-1$}}%
%
\special{pn 8}%
\special{pa 1200 974}%
\special{pa 6335 974}%
\special{fp}%
%
\special{pn 8}%
\special{pa 1200 1462}%
\special{pa 6335 1462}%
\special{dt 0.045}%
%
\special{pn 8}%
\special{pa 1200 1218}%
\special{pa 6335 1218}%
\special{fp}%
%
\special{pn 8}%
\special{pa 1200 1952}%
\special{pa 6335 1952}%
\special{dt 0.045}%
%
\special{pn 8}%
\special{pa 1200 2441}%
\special{pa 6335 2441}%
\special{fp}%
%
\special{pn 8}%
\special{pa 1200 1708}%
\special{pa 6335 1708}%
\special{fp}%
%
\special{pn 8}%
\special{pa 1200 2196}%
\special{pa 6335 2196}%
\special{dt 0.045}%
%
\special{pn 8}%
\special{pa 1795 550}%
\special{pa 1795 2669}%
\special{dt 0.045}%
%
\special{pn 8}%
\special{pa 2121 550}%
\special{pa 2121 2669}%
\special{fp}%
%
\special{pn 8}%
\special{pa 2447 550}%
\special{pa 2447 2669}%
\special{dt 0.045}%
%
\special{pn 8}%
\special{pa 2773 550}%
\special{pa 2773 2669}%
\special{dt 0.045}%
%
\special{pn 8}%
\special{pa 3099 550}%
\special{pa 3099 2669}%
\special{fp}%
%
\special{pn 8}%
\special{pa 1200 730}%
\special{pa 6335 730}%
\special{fp}%
%
\special{pn 8}%
\special{pa 3425 550}%
\special{pa 3425 2669}%
\special{dt 0.045}%
%
\special{pn 8}%
\special{pa 3751 550}%
\special{pa 3751 2669}%
\special{fp}%
%
\special{pn 8}%
\special{pa 4077 550}%
\special{pa 4077 2669}%
\special{dt 0.045}%
%
\special{pn 8}%
\special{pa 4403 550}%
\special{pa 4403 2669}%
\special{fp}%
%
\special{pn 8}%
\special{pa 4729 550}%
\special{pa 4729 2669}%
\special{dt 0.045}%
%
\special{pn 8}%
\special{pa 5055 550}%
\special{pa 5055 2669}%
\special{dt 0.045}%
%
\special{pn 8}%
\special{pa 5381 550}%
\special{pa 5381 2669}%
\special{fp}%
%
\special{pn 8}%
\special{pa 5707 550}%
\special{pa 5707 2669}%
\special{dt 0.045}%
%
\special{pn 8}%
\special{pa 6033 550}%
\special{pa 6033 2669}%
\special{fp}%
%
\special{pn 8}%
\special{pa 1469 550}%
\special{pa 1469 2669}%
\special{fp}%
\put(38.0800,-11.6200){\makebox(0,0)[lb]{$-1$}}%
\put(41.9100,-11.5300){\makebox(0,0)[lb]{$1$}}%
\put(48.4300,-13.9800){\makebox(0,0)[lb]{$1$}}%
\put(47.8600,-16.5000){\makebox(0,0)[lb]{$-1$}}%
\put(54.9500,-18.8600){\makebox(0,0)[lb]{$1$}}%
\put(54.9500,-18.8600){\makebox(0,0)[lb]{$1$}}%
\put(58.2100,-21.3100){\makebox(0,0)[lb]{$1$}}%
\put(57.7100,-23.7600){\makebox(0,0)[lb]{$-1$}}%
\put(61.4700,-26.2000){\makebox(0,0)[lb]{$1$}}%
\put(61.4700,-26.2000){\makebox(0,0)[lb]{$1$}}%
\put(60.9000,-6.7200){\makebox(0,0)[lb]{$z_{6,1}$}}%
\put(54.3800,-6.7200){\makebox(0,0)[lb]{$z_{5,1}$}}%
\put(54.3800,-21.4000){\makebox(0,0)[lb]{$-1$}}%
\put(54.3800,-21.4000){\makebox(0,0)[lb]{$-1$}}%
\put(28.8700,-9.0800){\makebox(0,0)[lb]{$1$}}%
\end{picture}}%
\]
One can see that the rank of the Jacobian matrix $\Jac|_{w_{\bullet}}$ is $7(<8)$ because of the row matrix $\Jac_{3}|_{w_{\bullet}}$ concerning $g_{4,1}, g_{5,2}, g_{6,3}$.
\end{example}

\begin{example}\label{ex: singularity_list}
Let $h=(3,3,4,5,5)$. In Section~\ref{subsec: preview} we demonstrated that for $w=32154$ the set $\hc{w}$ contains a lower diagonal full-string.  
By drawing the diagrams $\hc{w}$ for each permutation flag in $\Hess(N,h)$ (or asking Sage to compute the sets $\hc{w}$), one can confirm that the permutation flags $w_\bullet$ in $\Hess(N,h)$  for which $\hc{w}$ contains a lower diagonal full-string are the ones whose one-line notation are listed below. 

\begin{center}
\begin{tabular}{|c|c|c|c|c|c|c|c|} \hline
$12345$ & 
$12354$ & 
$12435$ & 
$13245$ & 
$13254$ & 
$14325$ & 
$21345$ & 
$21354$ \\ \hline
$21435$ &
$23145$ &
$23154$ &
$31245$ &
$31254$ &
$32145$ & 
$32154$ &
$41325$   \\ \hline
\end{tabular}
\end{center}
 Theorem \ref{thm:fixed_point_singular} tells us that these are precisely the singular permutation flags in $\Hess(N,h)$. 
\end{example}

\vspace{10pt}

\subsection{Jacobian matrix evaluated on Hessenberg-Schubert cells}
In this subsection, we give sufficient conditions for the rank of the Jacobian matrix of $\Jac$ to be constant on each Hessenberg-Schubert cell. These criterions will be used in Section~\ref{sec: normality} to determine which $\Hess(N,h)$ is a normal algebraic variety.
We also demonstrate that there exists a Hessenberg-Schubert cell whose permutation flag is singular, but its generic point is not. 

\begin{proposition} \label{prop: sufficient condition on cell}
Let $w_{\bullet}\in\Hess(N,h)$ be a permutation flag.
Then the following hold.
\begin{itemize}
 \item[$(1)$] If $w_{\bullet}$ is a nonsingular point of $\Hess(N,h)$, then every point $V_\bullet\in\Hess(N,h) \cap \SC{w}$ is a nonsingular point of $\Hess(N,h)$.
 \item[$(2)$] If $w_\bullet$ is a singular point of $\Hess(N,h)$, and $H_w^c$ contains a lower diagonal full string of height $n-1$ or $n-2$, then every point $V_\bullet\in\Hess(N,h) \cap \SC{w}$ is a singular point of $\Hess(N,h)$.
\end{itemize}
\end{proposition}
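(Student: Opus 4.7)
The plan is to apply the Jacobian criterion (Proposition~\ref{theorem:Jacobian_Criterion_Hessenberg}) at an arbitrary point $V_\bullet \in \Hess(N,h)\cap \SC{w}$, using the block decomposition of $\Jac$ from Section~\ref{subsec: Jacobi} together with Propositions~\ref{prop: a partial result 1} and~\ref{prop: a partial result 2}. The key observation is that Proposition~\ref{prop: a partial result 1} kills all entries $\partial g_{i,j}/\partial z_{p,q}|_{\text{\rm HS cell}}$ with $p-q \leq i-j$, while Proposition~\ref{prop: a partial result 2} pins down the entries with $p-q = i-j+1$ to the same constants ($\pm 1$ or $0$) that determined the central blocks $\Jdc{d}|_{w_\bullet}$ in the proof of Theorem~\ref{thm:fixed_point_singular}. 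Consequently, on the Hessenberg-Schubert cell, $\Jac|_{\text{\rm HS cell}}$ is block upper triangular with respect to the ordering by $d = i-j$, and its diagonal blocks $\Jdc{d}|_{\text{\rm HS cell}}$ coincide with $\Jdc{d}|_{w_\bullet}$.

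For part~(1), suppose $w_\bullet$ is nonsingular. Then Proposition~\ref{theorem:Jacobian_Criterion_Hessenberg} gives $\rank \Jac|_{w_\bullet} = |\hc{w}|$, and the block decomposition~\eqref{eq: decomp of J 2} forces every central block $\Jdc{d}|_{w_\bullet}$ to have full row rank $|G_d|$. By the observation above, each diagonal block $\Jdc{d}|_{\text{\rm HS cell}}$ retains full row rank. A pivoting argument now finishes: inside each diagonal block choose $|G_d|$ columns on which it is invertible; on the union of these chosen columns the corresponding square submatrix of $\Jac|_{\text{\rm HS cell}}$ is itself block upper triangular with invertible diagonal blocks (the possibly nonzero entries coming from $\Jdr{d}|_{\text{\rm HS cell}}$ only appear strictly above the block diagonal of the submatrix), hence invertible. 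So $\Jac|_{\text{\rm HS cell}}$ has full row rank $|\hc{w}|$ at every point of $\Hess(N,h)\cap \SC{w}$, and the Jacobian criterion declares every such point nonsingular.

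For part~(2), I treat the two subcases separately. If $\hc{w}$ contains the lower diagonal full-string $\{(n,1)\}$ of height $n-1$, then $g_{n,1}$ is among the generators of $J_{w,h}$. Every genuine variable $z_{p,q}$ satisfies $p-q \leq n-1 = i-j$ for $(i,j)=(n,1)$, so Proposition~\ref{prop: a partial result 1} makes the entire row of $g_{n,1}$ in $\Jac|_{\text{\rm HS cell}}$ identically zero, whence $\rank \Jac|_{\text{\rm HS cell}} < |\hc{w}|$ on the whole cell. If instead $\hc{w}$ contains the lower diagonal full-string $\{(n-1,1),(n,2)\}$ of height $n-2$, then both $g_{n-1,1}$ and $g_{n,2}$ are generators with $i-j = n-2$. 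By Proposition~\ref{prop: a partial result 1} all entries in their rows with $p-q \leq n-2$ vanish on the Hessenberg-Schubert cell; the only genuine variable with $p-q = n-1$ is $z_{n,1}$, and Proposition~\ref{prop: a partial result 2} evaluates the two entries at that column to $+1$ and $-1$ respectively; finally, no genuine variable satisfies $p-q \geq n$. So the two rows are negatives of one another and hence linearly dependent, forcing $\rank \Jac|_{\text{\rm HS cell}} < |\hc{w}|$ throughout the cell.

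The cleanness of this argument is also what limits the proposition: heights $n-1$ and $n-2$ are the only heights for which the row-block $\Jac_d$ has no column-block strictly to its right, so the possibly nonzero upper-triangular entries $\Jdr{d}|_{\text{\rm HS cell}}$ have no chance to compensate for a rank-deficient central block. For singular permutation flags arising from shorter lower diagonal full-strings, one expects the generic point of the Hessenberg-Schubert cell to have strictly larger Jacobian rank than $w_\bullet$, which is precisely the phenomenon hinted at in the introduction.
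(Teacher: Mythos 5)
Your proposal is correct and follows essentially the same route as the paper: the block upper-triangular structure of $\Jac$ restricted to the Hessenberg-Schubert cell coming from Propositions~\ref{prop: a partial result 1} and~\ref{prop: a partial result 2}, full-rank central blocks inherited from $w_\bullet$ for part (1), and the absence of any column-block to the right of the row matrices $\Jac_{n-1}$ and $\Jac_{n-2}$ for part (2). The only differences are cosmetic: you spell out the pivot-selection argument in part (1) and the explicit two-row computation in the height $n-2$ case slightly more than the paper does.
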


\begin{proof}
Since we have $\Hess(N,h) \cap \SC{w}\subseteq \Hess(N,h) \cap wU^-$, we can apply the Jacobian criterion (Proposition~\ref{theorem:Jacobian_Criterion_Hessenberg}) to prove the claim of this proposition as well.
To this end, recall that $\Jac$ is the Jacobian matrix defined in \eqref{eq: def of J}.
We denote by $\Jac |_{\text{HS cell}}$ its restriction on the Hessenberg-Schubert cell $\Hess(N,h)\cap \SC{w}$ as in Definition~\ref{def: def of rest on HS}.
We decompose the Jacobian matrix $\Jac |_{\text{HS cell}}$ as in \eqref{eq: decomp of J 3} and \eqref{eq: decomp of Jd 2}.
For a row matrix $\Jac_d|_{\text{HS cell}}$ of $\Jac|_{\text{HS cell}}$, 
Proposition~\ref{prop: a partial result 1} implies that $$\Jdl{d} |_{\text{HS cell}} = {\mathbf{0}}.$$ 
Namely, we obtain
\begin{align}\label{eq: restricting on cell}
\Jac_d|_{\text{HS cell}}=(\mathbf{0} \mid \Jdc{d}|_{\text{HS cell}} \mid \Jdr{d}|_{\text{HS cell}} ).
\end{align} 
for each row matrix $\Jac_d|_{\text{HS cell}}$ of $\Jac|_{\text{HS cell}}$  in \eqref{eq: decomp of J 3}. By the same argument as that below \eqref{eq: decomp of J 2}, this means that the Jacobian matrix $\Jac|_{\text{HS cell}}$ takes a block upper triangular form.
Moreover, applying Proposition~\ref{prop: a partial result 2} to each generator $g_{i,j}$ in $G_d$, it follows that the central block in \eqref{eq: restricting on cell} satisfies
$$\Jdc{d} |_{\text{HS cell}} = \Jdc{d} |_{w_{\bullet}}, $$
by which we mean that the central block $\Jdc{d} |_{\text{HS cell}}$ in a row matrix \eqref{eq: restricting on cell} has the same form when restricted to any point in the Hessenberg-Schubert cell $\Hess(N,h)\cap \SC{w}$.  

We now prove the claim (1).
By the assumption, $w_{\bullet}\in\Hess(N,h)$ is a nonsingular point of $\Hess(N,h)$. 
Hence the proof of Theorem~\ref{thm:fixed_point_singular} shows that each $\Jdc{d} |_{w_{\bullet}}$ has its proper full rank.
Thus, the last equality and the block upper triangularity of $\Jac|_{\text{HS cell}}$ imply that
$$\rank \left ( \Jac|_{\text{HS cell}} \right ) = \rank \left ( \Jac|_{w_{\bullet}} \right)$$
because the pivots of  the column-echelon form of $\Jac|_{\text{HS cell}}$ and the pivots of the column-echelon form of $\Jac|_{w_{\bullet}}$ are the same. This means that the Jacobian matrix $\Jac$ has its proper full rank on every point of the Hessenberg-Schubert cell $\Hess(N,h) \cap \SC{w}$.  

We next prove the claim (2).
If $H_w^c$ contains a lower diagonal full string of height $d=n-1$ (i.e. $ (n,1) \in H_w^c$), then then $ \Jdr{d}|_{\text{HS cell}}$ in \eqref{eq: restricting on cell} is the empty matrix because there are no variables $z_{p,q}$ satisfying $p-q > n$.
The same clam holds if $H_w^c$ contains a lower diagonal full string of height $d=n-2$ (i.e. $(n-1,1), (n,2)\in H_w^c$) since there are no variables $z_{p,q}$ satisfying $p-q > n-1$. Namely, in both of these cases $d=n-1,n-2$, \eqref{eq: restricting on cell} can be written as $$ \Jac_d|_{\text{HS cell}}=(\mathbf{0} \mid \Jdc{d}|_{\text{HS cell}} ) = (\mathbf{0} \mid \Jdc{d}|_{w_\bullet} ) .$$
Moreover, since $w_{\bullet}$ is a singular point of $\Hess(N,h)$ by the assumption, the matrix $\Jdc{d}|_{w_\bullet}=(\ \bm{0}\mid D_{d}\mid \bm{0}\ )$ does not have its proper full rank by the proof of Theorem~\ref{thm:fixed_point_singular}. This means by Remark~\ref{rem: rank and the number of rows} that the Jacobian matrix $\Jac$ does not have its proper full rank on every point of the cell $\Hess(N,h) \cap \SC{w}$.  
\end{proof}

\vspace{10pt}

\begin{remark}
It is well-known that regular nilpotent Hessenberg varieties admit a $\C^{\times}$-action (e.g.\ \cite[Section~5]{HT17}, \cite[Section~2]{AHHM17}) which satisfies the following two properties: (i) it preserves each Hessenberg-Schubert cell $\Hess(N,h) \cap \SC{w}$, (ii) $w_{\bullet}$ is the unique permutation flag in the cell $\Hess(N,h) \cap \SC{w}$.
Proposition~\ref{prop: sufficient condition on cell}~(1) can also be explained from this $\C^{\times}$-action since non-singularity is an open condition.
\end{remark}

\begin{remark}
We will see that Proposition~\ref{prop: sufficient condition on cell}~(2) applies to every cell in the Peterson variety in the next section (see Proposition~\ref{proposition:Peterson Strings}).  
\end{remark}

\vspace{10pt}

We end this section by presenting an example of a Hessenberg-Schubert cell $\Hess(N,h)\cap \SC{w}$ such that the permutation flag $w_{\bullet}$ is a singular point of $\Hess(N,h)$ and the rest of the points in the cell are nonsingular points of $\Hess(N,h)$.
This means that the ``singular point" analogue of Proposition~\ref{prop: sufficient condition on cell} (1) does not hold in general.

\begin{example} \label{ex:cellnonsingular}
Consider the example $h=(3,4,5,6,6,6)$ and $w=321654$ in one-line notation.  
The Hessenberg complement $\hc{w}$ contains the lower diagonal full-string of height $3$ consisting of $\{ (4,1), (5,2), (6,3) \}$.
\[
{\unitlength 0.1in%
\begin{picture}(12.0000,12.0000)(34.0000,-16.0000)%
%
\special{pn 0}%
\special{sh 0.150}%
\special{pa 3400 400}%
\special{pa 3600 400}%
\special{pa 3600 600}%
\special{pa 3400 600}%
\special{pa 3400 400}%
\special{ip}%
\special{pn 8}%
\special{pa 3400 400}%
\special{pa 3600 400}%
\special{pa 3600 600}%
\special{pa 3400 600}%
\special{pa 3400 400}%
\special{pa 3600 400}%
\special{fp}%
%
\special{pn 0}%
\special{sh 0.150}%
\special{pa 3600 400}%
\special{pa 3800 400}%
\special{pa 3800 600}%
\special{pa 3600 600}%
\special{pa 3600 400}%
\special{ip}%
\special{pn 8}%
\special{pa 3600 400}%
\special{pa 3800 400}%
\special{pa 3800 600}%
\special{pa 3600 600}%
\special{pa 3600 400}%
\special{pa 3800 400}%
\special{fp}%
%
\special{pn 0}%
\special{sh 0.150}%
\special{pa 3800 400}%
\special{pa 4000 400}%
\special{pa 4000 600}%
\special{pa 3800 600}%
\special{pa 3800 400}%
\special{ip}%
\special{pn 8}%
\special{pa 3800 400}%
\special{pa 4000 400}%
\special{pa 4000 600}%
\special{pa 3800 600}%
\special{pa 3800 400}%
\special{pa 4000 400}%
\special{fp}%
%
\special{pn 0}%
\special{sh 0.150}%
\special{pa 4000 400}%
\special{pa 4200 400}%
\special{pa 4200 600}%
\special{pa 4000 600}%
\special{pa 4000 400}%
\special{ip}%
\special{pn 8}%
\special{pa 4000 400}%
\special{pa 4200 400}%
\special{pa 4200 600}%
\special{pa 4000 600}%
\special{pa 4000 400}%
\special{pa 4200 400}%
\special{fp}%
%
\special{pn 0}%
\special{sh 0.150}%
\special{pa 4200 400}%
\special{pa 4400 400}%
\special{pa 4400 600}%
\special{pa 4200 600}%
\special{pa 4200 400}%
\special{ip}%
\special{pn 8}%
\special{pa 4200 400}%
\special{pa 4400 400}%
\special{pa 4400 600}%
\special{pa 4200 600}%
\special{pa 4200 400}%
\special{pa 4400 400}%
\special{fp}%
%
\special{pn 0}%
\special{sh 0.150}%
\special{pa 4400 400}%
\special{pa 4600 400}%
\special{pa 4600 600}%
\special{pa 4400 600}%
\special{pa 4400 400}%
\special{ip}%
\special{pn 8}%
\special{pa 4400 400}%
\special{pa 4600 400}%
\special{pa 4600 600}%
\special{pa 4400 600}%
\special{pa 4400 400}%
\special{pa 4600 400}%
\special{fp}%
%
\special{pn 0}%
\special{sh 0.150}%
\special{pa 3400 600}%
\special{pa 3600 600}%
\special{pa 3600 800}%
\special{pa 3400 800}%
\special{pa 3400 600}%
\special{ip}%
\special{pn 8}%
\special{pa 3400 600}%
\special{pa 3600 600}%
\special{pa 3600 800}%
\special{pa 3400 800}%
\special{pa 3400 600}%
\special{pa 3600 600}%
\special{fp}%
%
\special{pn 0}%
\special{sh 0.150}%
\special{pa 3600 600}%
\special{pa 3800 600}%
\special{pa 3800 800}%
\special{pa 3600 800}%
\special{pa 3600 600}%
\special{ip}%
\special{pn 8}%
\special{pa 3600 600}%
\special{pa 3800 600}%
\special{pa 3800 800}%
\special{pa 3600 800}%
\special{pa 3600 600}%
\special{pa 3800 600}%
\special{fp}%
%
\special{pn 0}%
\special{sh 0.150}%
\special{pa 3800 600}%
\special{pa 4000 600}%
\special{pa 4000 800}%
\special{pa 3800 800}%
\special{pa 3800 600}%
\special{ip}%
\special{pn 8}%
\special{pa 3800 600}%
\special{pa 4000 600}%
\special{pa 4000 800}%
\special{pa 3800 800}%
\special{pa 3800 600}%
\special{pa 4000 600}%
\special{fp}%
%
\special{pn 0}%
\special{sh 0.150}%
\special{pa 4000 600}%
\special{pa 4200 600}%
\special{pa 4200 800}%
\special{pa 4000 800}%
\special{pa 4000 600}%
\special{ip}%
\special{pn 8}%
\special{pa 4000 600}%
\special{pa 4200 600}%
\special{pa 4200 800}%
\special{pa 4000 800}%
\special{pa 4000 600}%
\special{pa 4200 600}%
\special{fp}%
%
\special{pn 0}%
\special{sh 0.150}%
\special{pa 4200 600}%
\special{pa 4400 600}%
\special{pa 4400 800}%
\special{pa 4200 800}%
\special{pa 4200 600}%
\special{ip}%
\special{pn 8}%
\special{pa 4200 600}%
\special{pa 4400 600}%
\special{pa 4400 800}%
\special{pa 4200 800}%
\special{pa 4200 600}%
\special{pa 4400 600}%
\special{fp}%
%
\special{pn 0}%
\special{sh 0.150}%
\special{pa 4400 600}%
\special{pa 4600 600}%
\special{pa 4600 800}%
\special{pa 4400 800}%
\special{pa 4400 600}%
\special{ip}%
\special{pn 8}%
\special{pa 4400 600}%
\special{pa 4600 600}%
\special{pa 4600 800}%
\special{pa 4400 800}%
\special{pa 4400 600}%
\special{pa 4600 600}%
\special{fp}%
%
\special{pn 0}%
\special{sh 0.150}%
\special{pa 3400 800}%
\special{pa 3600 800}%
\special{pa 3600 1000}%
\special{pa 3400 1000}%
\special{pa 3400 800}%
\special{ip}%
\special{pn 8}%
\special{pa 3400 800}%
\special{pa 3600 800}%
\special{pa 3600 1000}%
\special{pa 3400 1000}%
\special{pa 3400 800}%
\special{pa 3600 800}%
\special{fp}%
%
\special{pn 0}%
\special{sh 0.150}%
\special{pa 3600 800}%
\special{pa 3800 800}%
\special{pa 3800 1000}%
\special{pa 3600 1000}%
\special{pa 3600 800}%
\special{ip}%
\special{pn 8}%
\special{pa 3600 800}%
\special{pa 3800 800}%
\special{pa 3800 1000}%
\special{pa 3600 1000}%
\special{pa 3600 800}%
\special{pa 3800 800}%
\special{fp}%
%
\special{pn 0}%
\special{sh 0.150}%
\special{pa 3800 800}%
\special{pa 4000 800}%
\special{pa 4000 1000}%
\special{pa 3800 1000}%
\special{pa 3800 800}%
\special{ip}%
\special{pn 8}%
\special{pa 3800 800}%
\special{pa 4000 800}%
\special{pa 4000 1000}%
\special{pa 3800 1000}%
\special{pa 3800 800}%
\special{pa 4000 800}%
\special{fp}%
%
\special{pn 0}%
\special{sh 0.150}%
\special{pa 4000 800}%
\special{pa 4200 800}%
\special{pa 4200 1000}%
\special{pa 4000 1000}%
\special{pa 4000 800}%
\special{ip}%
\special{pn 8}%
\special{pa 4000 800}%
\special{pa 4200 800}%
\special{pa 4200 1000}%
\special{pa 4000 1000}%
\special{pa 4000 800}%
\special{pa 4200 800}%
\special{fp}%
%
\special{pn 0}%
\special{sh 0.150}%
\special{pa 4200 800}%
\special{pa 4400 800}%
\special{pa 4400 1000}%
\special{pa 4200 1000}%
\special{pa 4200 800}%
\special{ip}%
\special{pn 8}%
\special{pa 4200 800}%
\special{pa 4400 800}%
\special{pa 4400 1000}%
\special{pa 4200 1000}%
\special{pa 4200 800}%
\special{pa 4400 800}%
\special{fp}%
%
\special{pn 0}%
\special{sh 0.150}%
\special{pa 4400 800}%
\special{pa 4600 800}%
\special{pa 4600 1000}%
\special{pa 4400 1000}%
\special{pa 4400 800}%
\special{ip}%
\special{pn 8}%
\special{pa 4400 800}%
\special{pa 4600 800}%
\special{pa 4600 1000}%
\special{pa 4400 1000}%
\special{pa 4400 800}%
\special{pa 4600 800}%
\special{fp}%
%
\special{pn 8}%
\special{pa 3400 1000}%
\special{pa 3600 1000}%
\special{pa 3600 1200}%
\special{pa 3400 1200}%
\special{pa 3400 1000}%
\special{pa 3600 1000}%
\special{fp}%
%
\special{pn 8}%
\special{pa 3600 1000}%
\special{pa 3800 1000}%
\special{pa 3800 1200}%
\special{pa 3600 1200}%
\special{pa 3600 1000}%
\special{pa 3800 1000}%
\special{fp}%
%
\special{pn 8}%
\special{pa 3800 1000}%
\special{pa 4000 1000}%
\special{pa 4000 1200}%
\special{pa 3800 1200}%
\special{pa 3800 1000}%
\special{pa 4000 1000}%
\special{fp}%
%
\special{pn 0}%
\special{sh 0.150}%
\special{pa 4000 1000}%
\special{pa 4200 1000}%
\special{pa 4200 1200}%
\special{pa 4000 1200}%
\special{pa 4000 1000}%
\special{ip}%
\special{pn 8}%
\special{pa 4000 1000}%
\special{pa 4200 1000}%
\special{pa 4200 1200}%
\special{pa 4000 1200}%
\special{pa 4000 1000}%
\special{pa 4200 1000}%
\special{fp}%
%
\special{pn 0}%
\special{sh 0.150}%
\special{pa 4200 1000}%
\special{pa 4400 1000}%
\special{pa 4400 1200}%
\special{pa 4200 1200}%
\special{pa 4200 1000}%
\special{ip}%
\special{pn 8}%
\special{pa 4200 1000}%
\special{pa 4400 1000}%
\special{pa 4400 1200}%
\special{pa 4200 1200}%
\special{pa 4200 1000}%
\special{pa 4400 1000}%
\special{fp}%
%
\special{pn 0}%
\special{sh 0.150}%
\special{pa 4400 1000}%
\special{pa 4600 1000}%
\special{pa 4600 1200}%
\special{pa 4400 1200}%
\special{pa 4400 1000}%
\special{ip}%
\special{pn 8}%
\special{pa 4400 1000}%
\special{pa 4600 1000}%
\special{pa 4600 1200}%
\special{pa 4400 1200}%
\special{pa 4400 1000}%
\special{pa 4600 1000}%
\special{fp}%
%
\special{pn 0}%
\special{sh 0.150}%
\special{pa 3400 1200}%
\special{pa 3600 1200}%
\special{pa 3600 1400}%
\special{pa 3400 1400}%
\special{pa 3400 1200}%
\special{ip}%
\special{pn 8}%
\special{pa 3400 1200}%
\special{pa 3600 1200}%
\special{pa 3600 1400}%
\special{pa 3400 1400}%
\special{pa 3400 1200}%
\special{pa 3600 1200}%
\special{fp}%
%
\special{pn 8}%
\special{pa 3600 1200}%
\special{pa 3800 1200}%
\special{pa 3800 1400}%
\special{pa 3600 1400}%
\special{pa 3600 1200}%
\special{pa 3800 1200}%
\special{fp}%
%
\special{pn 8}%
\special{pa 3800 1200}%
\special{pa 4000 1200}%
\special{pa 4000 1400}%
\special{pa 3800 1400}%
\special{pa 3800 1200}%
\special{pa 4000 1200}%
\special{fp}%
%
\special{pn 0}%
\special{sh 0.150}%
\special{pa 4000 1200}%
\special{pa 4200 1200}%
\special{pa 4200 1400}%
\special{pa 4000 1400}%
\special{pa 4000 1200}%
\special{ip}%
\special{pn 8}%
\special{pa 4000 1200}%
\special{pa 4200 1200}%
\special{pa 4200 1400}%
\special{pa 4000 1400}%
\special{pa 4000 1200}%
\special{pa 4200 1200}%
\special{fp}%
%
\special{pn 0}%
\special{sh 0.150}%
\special{pa 4200 1200}%
\special{pa 4400 1200}%
\special{pa 4400 1400}%
\special{pa 4200 1400}%
\special{pa 4200 1200}%
\special{ip}%
\special{pn 8}%
\special{pa 4200 1200}%
\special{pa 4400 1200}%
\special{pa 4400 1400}%
\special{pa 4200 1400}%
\special{pa 4200 1200}%
\special{pa 4400 1200}%
\special{fp}%
%
\special{pn 0}%
\special{sh 0.150}%
\special{pa 4400 1200}%
\special{pa 4600 1200}%
\special{pa 4600 1400}%
\special{pa 4400 1400}%
\special{pa 4400 1200}%
\special{ip}%
\special{pn 8}%
\special{pa 4400 1200}%
\special{pa 4600 1200}%
\special{pa 4600 1400}%
\special{pa 4400 1400}%
\special{pa 4400 1200}%
\special{pa 4600 1200}%
\special{fp}%
%
\special{pn 0}%
\special{sh 0.150}%
\special{pa 3400 1400}%
\special{pa 3600 1400}%
\special{pa 3600 1600}%
\special{pa 3400 1600}%
\special{pa 3400 1400}%
\special{ip}%
\special{pn 8}%
\special{pa 3400 1400}%
\special{pa 3600 1400}%
\special{pa 3600 1600}%
\special{pa 3400 1600}%
\special{pa 3400 1400}%
\special{pa 3600 1400}%
\special{fp}%
%
\special{pn 0}%
\special{sh 0.150}%
\special{pa 3600 1400}%
\special{pa 3800 1400}%
\special{pa 3800 1600}%
\special{pa 3600 1600}%
\special{pa 3600 1400}%
\special{ip}%
\special{pn 8}%
\special{pa 3600 1400}%
\special{pa 3800 1400}%
\special{pa 3800 1600}%
\special{pa 3600 1600}%
\special{pa 3600 1400}%
\special{pa 3800 1400}%
\special{fp}%
%
\special{pn 8}%
\special{pa 3800 1400}%
\special{pa 4000 1400}%
\special{pa 4000 1600}%
\special{pa 3800 1600}%
\special{pa 3800 1400}%
\special{pa 4000 1400}%
\special{fp}%
%
\special{pn 0}%
\special{sh 0.150}%
\special{pa 4000 1400}%
\special{pa 4200 1400}%
\special{pa 4200 1600}%
\special{pa 4000 1600}%
\special{pa 4000 1400}%
\special{ip}%
\special{pn 8}%
\special{pa 4000 1400}%
\special{pa 4200 1400}%
\special{pa 4200 1600}%
\special{pa 4000 1600}%
\special{pa 4000 1400}%
\special{pa 4200 1400}%
\special{fp}%
%
\special{pn 0}%
\special{sh 0.150}%
\special{pa 4200 1400}%
\special{pa 4400 1400}%
\special{pa 4400 1600}%
\special{pa 4200 1600}%
\special{pa 4200 1400}%
\special{ip}%
\special{pn 8}%
\special{pa 4200 1400}%
\special{pa 4400 1400}%
\special{pa 4400 1600}%
\special{pa 4200 1600}%
\special{pa 4200 1400}%
\special{pa 4400 1400}%
\special{fp}%
%
\special{pn 0}%
\special{sh 0.150}%
\special{pa 4400 1400}%
\special{pa 4600 1400}%
\special{pa 4600 1600}%
\special{pa 4400 1600}%
\special{pa 4400 1400}%
\special{ip}%
\special{pn 8}%
\special{pa 4400 1400}%
\special{pa 4600 1400}%
\special{pa 4600 1600}%
\special{pa 4400 1600}%
\special{pa 4400 1400}%
\special{pa 4600 1400}%
\special{fp}%
\end{picture}}%
\]
The Schubert cell $\SC{w}$ is represented by the set of matrices of the form
\begin{align}\label{eq: Schubert cell example}
\left(\begin{array}{rrrrrr}
z_{13} & z_{12} & 1 & 0 & 0 & 0 \\
z_{23} & 1 & 0 & 0 & 0 & 0 \\
1 & 0 & 0 & 0 & 0 & 0 \\
0 & 0 & 0 & z_{46} & z_{45} & 1 \\
0 & 0 & 0 & z_{56} & 1 & 0 \\
0 & 0 &0 & 1 & 0 & 0
\end{array}\right)
\end{align}
with free genuine variables $z_{12}$, $z_{45}$, $z_{13}$, $z_{46}$, $z_{23}$, $z_{56}$ (in order), 
where the genuine variables $z_{p,q}$ satisfying $p>q$ vanish on $\SC{w}$.
One can directly verify that $\Hess(N,h)\cap \SC{w}=\SC{w}$ in this case so that $z_{12}$, $z_{45}$, $z_{13}$, $z_{46}$, $z_{23}$, $z_{56}$ form the free coordinates on the Hessenberg-Schubert cell $\Hess(N,h)\cap \SC{w}$.
The generators of $J_{w,h}$ are
$g_{4,3}$, $g_{4,2}$, $g_{5,3}$, $g_{4,1}$, $g_{5,2}$, $g_{6,3}$ (in order), and the Jacobian matrix on $\Hess(N,h)\cap \SC{w}$ is 
$$
{\unitlength 0.1in%
\begin{picture}(55.8500,16.5800)(12.0000,-22.0200)%
\put(12.0000,-11.7000){\makebox(0,0)[lb]{$g_{4,2}$}}%
\put(12.0000,-9.2200){\makebox(0,0)[lb]{$g_{4,3}$}}%
\put(12.0000,-16.6500){\makebox(0,0)[lb]{$g_{4,1}$}}%
\put(12.0000,-19.1300){\makebox(0,0)[lb]{$g_{5,2}$}}%
\put(12.0000,-14.1700){\makebox(0,0)[lb]{$g_{5,3}$}}%
\put(12.0000,-21.6100){\makebox(0,0)[lb]{$g_{6,3}$}}%
\put(41.7400,-6.7400){\makebox(0,0)[lb]{$z_{5,1}$}}%
\put(58.8400,-19.1300){\makebox(0,0)[lb]{$1$}}%
\put(45.0400,-6.7400){\makebox(0,0)[lb]{$z_{4,3}$}}%
\put(31.8300,-6.7400){\makebox(0,0)[lb]{$z_{4,1}$}}%
\put(25.2200,-6.7400){\makebox(0,0)[lb]{$z_{4,6}$}}%
\put(48.3500,-6.7400){\makebox(0,0)[lb]{$z_{5,2}$}}%
\put(35.1300,-6.7400){\makebox(0,0)[lb]{$z_{5,6}$}}%
\put(51.6500,-6.7400){\makebox(0,0)[lb]{$z_{6,1}$}}%
\put(15.3000,-6.7400){\makebox(0,0)[lb]{$z_{1,2}$}}%
\put(58.2600,-6.7400){\makebox(0,0)[lb]{$z_{6,2}$}}%
\put(18.6100,-6.7400){\makebox(0,0)[lb]{$z_{4,5}$}}%
\put(38.4300,-6.7400){\makebox(0,0)[lb]{$z_{4,2}$}}%
\put(28.5200,-6.7400){\makebox(0,0)[lb]{$z_{2,3}$}}%
\put(21.9100,-6.7400){\makebox(0,0)[lb]{$z_{1,3}$}}%
\put(48.3500,-14.1700){\makebox(0,0)[lb]{$-1$}}%
%
\special{pn 8}%
\special{pa 1200 980}%
\special{pa 6785 980}%
\special{fp}%
%
\special{pn 8}%
\special{pa 1200 1475}%
\special{pa 6785 1475}%
\special{fp}%
%
\special{pn 8}%
\special{pa 1200 1227}%
\special{pa 6785 1227}%
\special{dt 0.045}%
%
\special{pn 8}%
\special{pa 1200 1971}%
\special{pa 6785 1971}%
\special{dt 0.045}%
%
\special{pn 8}%
\special{pa 1200 1723}%
\special{pa 6785 1723}%
\special{dt 0.045}%
%
\special{pn 8}%
\special{pa 1803 550}%
\special{pa 1803 2202}%
\special{dt 0.045}%
%
\special{pn 8}%
\special{pa 2133 550}%
\special{pa 2133 2202}%
\special{fp}%
%
\special{pn 8}%
\special{pa 2464 550}%
\special{pa 2464 2202}%
\special{dt 0.045}%
%
\special{pn 8}%
\special{pa 2794 550}%
\special{pa 2794 2202}%
\special{fp}%
%
\special{pn 8}%
\special{pa 3125 550}%
\special{pa 3125 2202}%
\special{dt 0.045}%
%
\special{pn 8}%
\special{pa 1200 732}%
\special{pa 6785 732}%
\special{fp}%
%
\special{pn 8}%
\special{pa 3455 550}%
\special{pa 3455 2202}%
\special{dt 0.045}%
%
\special{pn 8}%
\special{pa 3786 550}%
\special{pa 3786 2202}%
\special{fp}%
%
\special{pn 8}%
\special{pa 4116 550}%
\special{pa 4116 2202}%
\special{dt 0.045}%
%
\special{pn 8}%
\special{pa 4447 550}%
\special{pa 4447 2202}%
\special{fp}%
%
\special{pn 8}%
\special{pa 4777 550}%
\special{pa 4777 2202}%
\special{dt 0.045}%
%
\special{pn 8}%
\special{pa 5107 550}%
\special{pa 5107 2202}%
\special{dt 0.045}%
%
\special{pn 8}%
\special{pa 5438 550}%
\special{pa 5438 2202}%
\special{fp}%
%
\special{pn 8}%
\special{pa 5768 550}%
\special{pa 5768 2202}%
\special{dt 0.045}%
%
\special{pn 8}%
\special{pa 6099 550}%
\special{pa 6099 2202}%
\special{fp}%
%
\special{pn 8}%
\special{pa 1473 550}%
\special{pa 1473 2202}%
\special{fp}%
\put(45.0400,-11.7000){\makebox(0,0)[lb]{$-1$}}%
\put(48.9300,-11.6100){\makebox(0,0)[lb]{$1$}}%
\put(52.2300,-14.0900){\makebox(0,0)[lb]{$1$}}%
\put(54.9600,-19.1300){\makebox(0,0)[lb]{$-1$}}%
\put(63.3800,-19.0500){\makebox(0,0)[lb]{$z_{5,6}$}}%
\put(63.2200,-6.7400){\makebox(0,0)[lb]{$z_{6,3}$}}%
\put(54.9600,-6.7400){\makebox(0,0)[lb]{$z_{5,3}$}}%
\put(61.2000,-21.6000){\makebox(0,0)[lb]{$z_{1,2}-z_{2,3}$}}%
\put(38.4300,-9.2200){\makebox(0,0)[lb]{$-1$}}%
\put(42.3200,-9.1300){\makebox(0,0)[lb]{$1$}}%
\put(45.7900,-8.9700){\makebox(0,0)[lb]{$*$}}%
\put(55.5300,-16.5700){\makebox(0,0)[lb]{$1$}}%
\put(58.2600,-21.6100){\makebox(0,0)[lb]{$-1$}}%
\put(58.2600,-21.6100){\makebox(0,0)[lb]{$-1$}}%
\put(49.0900,-8.9700){\makebox(0,0)[lb]{$*$}}%
\put(52.4000,-8.9700){\makebox(0,0)[lb]{$*$}}%
\put(55.7000,-8.9700){\makebox(0,0)[lb]{$*$}}%
\put(59.0000,-8.9700){\makebox(0,0)[lb]{$*$}}%
\put(59.0000,-13.9300){\makebox(0,0)[lb]{$*$}}%
\put(55.7000,-13.9300){\makebox(0,0)[lb]{$*$}}%
\put(55.7000,-11.3700){\makebox(0,0)[lb]{$*$}}%
\put(59.0000,-11.3700){\makebox(0,0)[lb]{$*$}}%
\put(63.8000,-11.3700){\makebox(0,0)[lb]{$*$}}%
\put(63.8000,-8.9700){\makebox(0,0)[lb]{$*$}}%
\put(63.8000,-13.9300){\makebox(0,0)[lb]{$*$}}%
\put(62.3900,-16.5700){\makebox(0,0)[lb]{$-z_{4,5}$}}%
\end{picture}}%
$$
where the symbols $*$ represent some polynomials on the genuine variables appearing in \eqref{eq: Schubert cell example}, and we omit zeros on the empty spots.
Evaluating this matrix on the permutation flag $w_{\bullet}$ (i.e., taking $z_{ij}=0$ for all the genuine variables), we see that $\rank(\Jac|_{w_\bullet})= 5$ at $w_{\bullet}$.
In fact, 
the lower right submatrix
$$
\left(\begin{array}{cc|c}
 1 & 0 & 0 \\
 -1 & 1 & 0 \\
 0 & -1 & 0
\end{array}\right)
$$ 
of $\Jac|_{w_\bullet}$ arising from the generators $g_{4,1}, g_{5,2}, g_{6,3}$ does not have its proper full rank, where it can also be explained by Theorem~\ref{thm:fixed_point_singular} since $(4,1), (5,2), (6,3)\in \hc{w}$ form a lower diagonal full string.
Thus, $w_{\bullet}$ is a singular point of $\Hess(N,h)$.
However, on the Hessenberg-Schubert cell $\Hess(N,h)\cap \SC{w}$, there is a dense open subset where the the Jacobian matrix has its proper full rank. More precisely, any point where the determinant of the lower right submatrix
$$
\left(\begin{array}{cc|c}
 1 & 0 & -z_{45} \\
 -1 & 1 & z_{56} \\
 0 & -1 & z_{12} - z_{23}
\end{array}\right)
$$ 
is nonzero will have its proper full rank.
That is, the following Zariski open subset of $\Hess(N,h)\cap \SC{w}$ consists of nonsingular points of $\Hess(N,h)$ :

$$ \left \{ \left(\begin{array}{cccccc}
z_{13} & z_{12} & 1 & 0 & 0 & 0 \\
z_{23} & 1 & 0 & 0 & 0 & 0 \\
1 & 0 & 0 & 0 & 0 & 0 \\
0 & 0 & 0 & z_{46} & z_{45} & 1 \\
0 & 0 & 0 & z_{56} & 1 & 0 \\
0 & 0 &0 & 1 & 0 & 0
\end{array}\right) \middle | \ z_{12}-z_{23} - z_{45}+z_{56} \neq 0 \right \} .$$
Hence, in this cell, the permutation flag $w_{\bullet}$ is a singular point of $\Hess(N,h)$, but generic points of the cell $\Hess(N,h)\cap \SC{w}$ are nonsingular points of $\Hess(N,h)$.
\end{example}

\bigskip

\section{Normality of regular nilpotent Hessenberg varieties}\label{sec: normality}

The goal of this section is to prove Theorem~\ref{thm:normal}, which explicitly classifies when regular nilpotent Hessenberg varieties $\Hess(N,h)$ are normal algebraic variety based on the shape of their corresponding Hessenberg functions $h$. As they are local complete intersections (\cite[Corollary~3.17]{ADGH18}), they are Cohen-Macaulay (\cite[Proposition~18.13]{Eis95}). Hence, $\Hess(N,h)$ is normal if and only if its singular locus is codimension 2 or greater \cite[Chapter~II,~Proposition~8.23]{Har77}. For $\Hess(N,h)$ that are not normal, the proof of Theorem~\ref{thm:normal} also describes the the codimension 1 Hessenberg-Schubert cells which consists of singular points of $\Hess(N,h)$. 
As in the previous sections, we always assume that a Hessenberg function $h\colon[n]\rightarrow [n]$ always satisfies that
\begin{align}\label{eq: basis assumption} 
 h(i) \ge i+1 \qquad (1\le i<n).
\end{align}

\subsection{Codimension 1 Hessenberg-Schubert cells}

Let $w_0$ denote the longest element in $\Sn$.  
Based on the work of Cho-Hong-Lee (\cite[Definition~4.3]{CHL21}), we begin with describing the set of codimension 1 Hessenberg-Schubert cells of $\Hess(N,h)$ under the assumption \eqref{eq: basis assumption}.

\begin{definition} \label{def:codim1}
Let $h(0)\coloneqq1$. For $1\le i\le n-1$, we define a permutation $p_i\in\Sn$  by taking cases as follows $($we write the one-line notation in the second line of each definition$):$ 
\begin{itemize} 
 \item[\rm(i)] if $h(i-1)>i$ and $h(i)>i+1$, then
  \begin{align*}
   p_i  
   &\coloneqq w_0 s_i \  \\
   &\ = n , n-1 , \cdots , n+2-i , n-i , n+1-i , n-i-1 , \cdots , 2 ,1   ,
  \end{align*}  
 \item[\rm(ii)] if $h(i-1)>i$ and $h(i)=i+1$, then
  \begin{align*}
   p_i 
   & \coloneqq w_0 s_{n-1}\cdots s_{i+1}s_{i} \\ 
   &\ = n , n-1 , \cdots , n+2-i , 1 , n+1-i , n-i , \cdots , 3 , 2 ,
  \end{align*}  
 \item[\rm(iii)] if $h(i-1)=i$ and $h(i)>i+1$, then
  \begin{align*}
   p_i 
  & \coloneqq w_0 s_1s_2\cdots s_i \\
  & \ = n-1 ,n-2 , \cdots , n-i , n , n-i-1 , \cdots , 2 , 1 ,
  \end{align*}  
  \item[\rm(iv)] if $h(i-1)=i$ and $h(i)=i+1$, then
  \begin{align*}
   p_i &
   \coloneqq w_{P_i} \\ &  \ = i , i-1 , \cdots , 1 , n , n-1 , \cdots , i+2 , i+1 ,
  \end{align*}  
  where $w_{P_i}$ is the longest element of the parabolic subgroup $P_i\leq \Sn$ generated by all simple transpositions in $\Sn$ excluding $s_i$, i.e., $$P_i= \langle s_1,s_2, \ldots, s_{i-1}, s_{i+1},s_{i+2},\ldots, s_{n-1}  \rangle. $$
\end{itemize}  
\end{definition}

The following result is proven essentially in Sections 3 and 4 of \cite{CHL21}.
We restate the result in our notation here for ease of reference.

\begin{proposition} \label{prop:codim1}
We have $(p_i )_{\bullet} \in\Hess(N,h)$ for $1\le i\le n-1$.
Moreover, the codimension-$1$ Hessenberg-Schubert cells of $\Hess(N,h)$ are precisely $\Hess(N,h)\cap \SC{p_i}$ for $1\le i\le n-1$.
\end{proposition}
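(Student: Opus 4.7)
The plan is to establish two directions: (1) each $(p_i)_\bullet$ is a permutation flag in $\Hess(N,h)$ and its cell has codimension one; (2) no other Hessenberg-Schubert cell of $\Hess(N,h)$ has codimension one.

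For (1), I would first verify $(p_i)_\bullet\in\Hess(N,h)$ using Lemma~\ref{lem: permutation flags in Hess 2}. In each of the four cases of Definition~\ref{def:codim1}, the one-line notation immediately gives $p_i^{-1}$, and the $n-1$ inequalities $p_i^{-1}(k)\le h(p_i^{-1}(k+1))$ reduce directly to the case-defining hypotheses on $h(i-1),h(i)$ combined with the standing assumption \eqref{eq: basis assumption}. The dimension computation then uses Tymoczko's formula for the Hessenberg-Schubert cell,
\[
\dim(\Hess(N,h)\cap \SC{w})=\#\{(a,b)\in[n]\times[n]:b<a\le h(b),\ w(a)<w(b)\},
\]
which together with \eqref{eq: dim of Hess} gives
\[
\codim_{\Hess(N,h)}(\Hess(N,h)\cap \SC{w})=\#\{(a,b):b<a\le h(b),\ w(a)>w(b)\}.
\]
Codimension one is therefore equivalent to $w$ having exactly one such ``bad pair''. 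A direct inspection in each of the four cases shows that the unique bad pair of $p_i$ is $(i+1,i)$, which lies in the allowed region because $h(i)\ge i+1$ by \eqref{eq: basis assumption}.

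For (2), a short transitivity argument shows that the unique bad pair must be adjacent: if $(a,b)$ is bad with $a\ge b+2$, then for any $c$ with $b<c<a$ the pairs $(c,b)$ and $(a,c)$ both lie in the allowed region by the monotonicity of $h$, and they cannot simultaneously satisfy $w(c)<w(b)$ and $w(a)<w(c)$ (which would contradict $w(a)>w(b)$); so at least one of them is also bad. Hence the unique bad pair is $(i+1,i)$ for some $i\in\{1,\dots,n-1\}$. Fixing such $i$, the requirements that $(i+1,i)$ be the only bad pair and that $w_\bullet\in \Hess(N,h)$ force $w$ to take exactly one of the four forms listed in Definition~\ref{def:codim1}, with the applicable case determined by whether $h(i-1)=i$ or $>i$ and whether $h(i)=i+1$ or $>i+1$.

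The main obstacle is the last step of (2), namely the case-by-case argument that these two constraints determine $w$ uniquely in each regime; this amounts to propagating the no-other-bad-pairs condition outward from positions $i,i+1$ and tracking how the adjacent inequalities in Lemma~\ref{lem: permutation flags in Hess 2} interact with the boundary values $h(i-1),h(i)$. This detailed combinatorial classification is carried out in \cite[Sections~3--4]{CHL21}, which can be cited for the full argument.
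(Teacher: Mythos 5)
Your proposal is correct in outline but takes a genuinely different route from the paper. The paper's proof is a pure counting argument: palindromicity of the Betti numbers of $\Hess(N,h)$ (\cite{Pre16}) together with the fact that the second Betti number equals $n-1$ under the standing assumption $h(i)\ge i+1$ (\cite{Pre13}) shows there are exactly $n-1$ codimension-one Hessenberg--Schubert cells; it then cites \cite{CHL21} only for the existence statement that the $n-1$ cells $\Hess(N,h)\cap\SC{p_i}$ each have codimension one (and that $p_i=w_0w^{[i]}$), so exhaustiveness follows for free from the count. You instead work directly with the cell-dimension formula, recast codimension as the number of ``bad pairs'' $(a,b)$ with $b<a\le h(b)$ and $w(a)>w(b)$, check that each $p_i$ has the single bad pair $(i+1,i)$, and prove by a clean transitivity argument (using monotonicity of $h$) that a unique bad pair must be adjacent --- a reduction the paper does not contain and which gives a more concrete explanation of why only the $p_i$ can occur. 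The trade-off is that your argument still defers the final classification (that a permutation flag with unique bad pair $(i+1,i)$ must equal $p_i$) to \cite{CHL21}, which is exactly the step the paper's Betti-number count sidesteps, while the paper in turn needs the nontrivial topological inputs of palindromicity and $b_2=n-1$. The one soft spot: the paper cites \cite{CHL21} only for the construction of the $p_i$ and the verification that their cells have codimension one, supplying exhaustiveness by the count instead; so before leaning on \cite{CHL21} for uniqueness you should either confirm that the classification appears there in the form you need, or finish the (elementary but fiddly) case analysis propagating the no-other-bad-pair condition outward from positions $i$ and $i+1$.
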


\begin{proof}
Writing $d= \dim_{\C}\Hess(N,h)$, 
the $(2d-2)$-th Betti number of $\Hess(N,h)$ is equal to the second Betti number of $\Hess(N,h)$ by \cite[Theorem 12]{Pre16}. Since we are assuming \eqref{eq: basis assumption}, the latter number is known to be $n-1$ from \cite[Corollary~4.13]{Pre13}.
Thus, there are precisely $n-1$ Hessenberg-Schubert cells of (complex) codimension 1.
In \cite{CHL21}, the authors introduced the permutations $w^{[1]}$, $w^{[2]}$, $\ldots,$ $w^{[n-1]}\in\Sn$  which satisfy
\begin{itemize}
 \item[(1)] $(w_0w^{[i]})_{\bullet} \in \Hess(N,h)$ for $1\le i\le n-1$,
 \item[(2)] $\dim_{\C}\Hess(N,h)\cap \SC{w_0w^{[i]}} = \dim_{\C}\Hess(N,h)-1$
\end{itemize}
(see \cite[Proposition~4.5 and the proof of Proposition~3.2]{CHL21}).
By their Definition~4.3, it is clear that $p_i=w_0w^{[i]}$ for $1\le i\le n-1$. 
(We remark that \cite{CHL21} used the convention $h(0)=2$ whereas we use the convention $h(0)=1$. The difference of conventions does not cause any difference since when $i=1$ cases (i) and (iii) give the same permutation and so do cases (ii) and (iv).)
Thus, the claim follows.
\end{proof}

\vspace{10pt}

\begin{example}
Let $n=7$ and $h=(2,4,5,5,6,7,7)$.
Then the permutations of the codimension 1 Hessenberg-Schubert cells of $\Hess(N,h)$ are
\begin{align*} 
 &p_1 = 1\ 7\ 6\ 5\ 4\ 3\ 2 \quad (h(0)=1, \ h(1)=2),\\
 &p_2 = 6\ 5\ 7\ 4\ 3\ 2\ 1 \quad (h(1)=2, \ h(2)>3),\\
 &p_3 = 7\ 6\ 4\ 5\ 3\ 2\ 1 \quad (h(2)>3, \ h(3)>4),\\
 &p_4 = 7\ 6\ 5\ 1\ 4\ 3\ 2 \quad (h(3)>4, \ h(4)=5),\\
 &p_5 = 5\ 4\ 3\ 2\ 1\ 7\ 6 \quad (h(4)=5, \ h(5)=6),\\
 &p_6 = 6\ 5\ 4\ 3\ 2\ 1\ 7 \quad (h(5)=6, \ h(6)=7).
\end{align*}  
One can also verify that $(p_1)_{\bullet},\ldots, (p_7)_{\bullet} \in \Hess(N,h)$ directly by Lemma~\ref{lem: permutation flags in Hess 2}.
\end{example}

\vspace{10pt}

Now that we have established that the permutations $p_i$ correspond to codimension~1 Hessenberg-Schubert cells $\Hess(N,h)\cap \SC{p_i}$, we set about analyzing when each cell $\Hess(N,h)\cap \SC{p_i}$ consists of singular points of $\Hess(N,h)$.  We will use the following lemma repeatedly to classify singularity of these codimension 1 cells.

\begin{lemma} \label{lem:ui_not_n1 2}
For a permutation flag $w_{\bullet}\in \Hess(N,h)$, we have $(i,j)\in \hc{w}$ if and only if 
$(w^{-1}(i),w^{-1}(j))\in \hc{}$, where $\hc{}\coloneqq \hc{e}$ is the standard Hessenberg complement.
\end{lemma}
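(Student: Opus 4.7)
The plan is to observe that this lemma is, at its core, a direct translation between the two equivalent descriptions of $\hc{w}$ given in the definition \eqref{eq:r_h}, so the proof should amount to little more than unwinding notation. I would start from the second equivalent description, which states
\[
\hc{w} = \{(i,j)\in[n]\times[n] \mid w^{-1}(i) > h(w^{-1}(j))\}.
\]
Setting $k \coloneqq w^{-1}(i)$ and $\ell \coloneqq w^{-1}(j)$, the membership condition $w^{-1}(i) > h(w^{-1}(j))$ reads exactly as $k > h(\ell)$, which by definition is the condition for $(k,\ell) \in \hc{e}$. Hence $(i,j)\in \hc{w}$ if and only if $(w^{-1}(i),w^{-1}(j))\in \hc{e}$, as claimed.

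Conceptually, this lemma records the fact that $\hc{w}$ is obtained from the standard Hessenberg complement $\hc{e}$ by simultaneously permuting both rows and columns by $w$: a box sitting at $(k,\ell)$ of $\hc{e}$ corresponds to the box at $(w(k),w(\ell))$ of $\hc{w}$. This is precisely the pictorial operation that produces the second figure in Section~\ref{subsec: preview} from the first figure there. No deeper input is needed; in particular, neither the hypothesis $w_{\bullet}\in\Hess(N,h)$ nor the basic assumption \eqref{eq: basic assumption} plays any role in the argument, although the hypothesis $w_{\bullet}\in\Hess(N,h)$ is likely included because the lemma is intended to be applied in that setting.

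There is essentially no obstacle to overcome here: the only thing one has to be careful about is not confusing which coordinate gets permuted by $w$ versus $w^{-1}$. That is settled immediately by reading off the second line of \eqref{eq:r_h}. In the subsequent application in Section~\ref{sec: normality}, this lemma will presumably be used to check whether $\hc{p_i}$ contains a lower diagonal full-string by instead inspecting where the preimages $p_i^{-1}(k)$ of the diagonal indices land in the fixed set $\hc{e}$, which is a much more uniform object to work with.
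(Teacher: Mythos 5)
Your proof is correct and is exactly the paper's argument: the paper simply states that the claim follows directly from \eqref{eq:r_h}, which is the unwinding of notation you carry out explicitly. Your observation that the hypothesis $w_{\bullet}\in\Hess(N,h)$ is not actually used is also accurate.
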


\begin{proof}
The claim follows directly from \eqref{eq:r_h}.
\end{proof}
 
\vspace{10pt}
The next two lemmas provide case-by-case analysis of when a codimension 1 Hessenberg-Schubert cell $\Hess(N,h)\cap \SC{p_i}$ 
consists of nonsingular points of $\Hess(N,h)$.

\begin{lemma}\label{lem: nonsingular codim 1 cell}
If $p_i\in\Sn$ is defined as in cases \text{\rm (i), (ii), or (iii)},
then the permutation flag $(p_i)_{\bullet} \in \Hess(N,h)$ is a nonsingular point of $\Hess(N,h)$.
\end{lemma}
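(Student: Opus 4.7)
The plan is to apply Theorem~\ref{thm:fixed_point_singular}, which reduces nonsingularity of $\Hess(N,h)$ at $(p_i)_\bullet$ to showing that $\hc{p_i}$ contains no lower diagonal full-string. Since every such full-string contains both the bottom-row corner $(n, n-d+1)$ and the leftmost-column corner $(d,1)$, it will suffice to show that in cases (i) and (ii) no element of the form $(n, c)$ belongs to $\hc{p_i}$, and that in case (iii) no element of the form $(d, 1)$ belongs to $\hc{p_i}$.

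Using the description $\hc{p_i} = \{(a,b) \mid p_i^{-1}(a) > h(p_i^{-1}(b))\}$ recalled in \eqref{eq:r_h}, an entry $(n,c) \in \hc{p_i}$ would force $p_i^{-1}(n) > h(p_i^{-1}(c)) \ge 1$, which fails as soon as $p_i^{-1}(n) = 1$. Similarly, an entry $(d,1) \in \hc{p_i}$ would force $p_i^{-1}(d) > h(p_i^{-1}(1))$; if $p_i^{-1}(1) = n$ then the right side equals $h(n) = n$, and the inequality becomes impossible because $p_i^{-1}(d) \in [n]$. So everything reduces to establishing two identities: $p_i^{-1}(n) = 1$ in cases (i) and (ii), and $p_i^{-1}(1) = n$ in case (iii).

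These identities should be extractable directly from the one-line notations listed in Definition~\ref{def:codim1}. In cases (i) and (ii) the assumption $h(i-1) > i$ combined with the convention $h(0) = 1$ forces $i \ge 2$, and for such $i$ both displayed one-line notations begin with $n$, giving $p_i^{-1}(n) = 1$. In case (iii), the assumption $h(i) > i+1$ together with $h(i) \le n$ forces $i \le n-2$, and in that range the displayed one-line notation $n-1, n-2, \ldots, n-i, n, n-i-1, \ldots, 2, 1$ ends with $\ldots, 2, 1$, yielding $p_i^{-1}(1) = n$. Invoking Theorem~\ref{thm:fixed_point_singular} will then complete the argument.

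The only obstacle is the bookkeeping needed to confirm that each identity genuinely holds across the full range of $i$ allowed in each case, in particular that the inserted $1$ in the middle of the case~(ii) one-line notation does not disturb the first entry, and that the tail $\ldots, 2, 1$ in case~(iii) persists down to $i=1$. Both checks are read straight off Definition~\ref{def:codim1}, so no substantive technical difficulty is expected.
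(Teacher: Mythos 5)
Your proof is correct and follows essentially the same route as the paper: reduce via Theorem~\ref{thm:fixed_point_singular} to showing $\hc{p_i}$ contains no lower diagonal full-string, then rule out one corner of any such string by reading an extreme value of $p_i^{-1}$ off the one-line notations and applying \eqref{eq:r_h}. The only immaterial difference is that in case (i) you exclude the bottom-row corner $(n,c)$ via $p_i^{-1}(n)=1$ (valid since $h(i-1)>i$ forces $i\ge 2$), whereas the paper excludes the first-column corner $(d,1)$ via $p_i^{-1}(1)\ge n-1$; both work.
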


\begin{proof}
By Theorem~\ref{thm:fixed_point_singular}, it suffices to show that $\hc{p_i}$ does not contain any lower diagonal full-string. We consider the cases (i), (ii), (iii) of the definition of $p_i$ separately.

\underline{Case (i) :}
By definition, we have $p_i = w_0 s_i$.
In this case, we have
\begin{align*}
 p_i^{-1}(1) = 
  \begin{cases} 
   n \quad &(i<n-1), \\
   n-1 &(i=n-1).
 \end{cases}
\end{align*}  
So we have $p_i^{-1}(1)\ge n-1$. 
ence, it follows from Lemma~\ref{lem:ui_not_n1 2} that we have $(a,1)\notin \hc{p_i}$ for all $1\le a\le n$ since $p_i^{-1}(1)\ge n-1$ implies that $(p_i^{-1}(a),p_i^{-1}(1))\notin \hc{}$.
Thus, $\hc{p_i}$ does not contain any lower diagonal full-string in this case.

\underline{Case (ii) :}
By definition, we have $p_i= w_0 s_{n-1}\cdots s_{i+1}s_{i}$.
In this case, we have
\begin{align*}
 p_i^{-1}(n) = 
  \begin{cases} 
   1 \quad &(i>1), \\
   2 &(i=1).
 \end{cases}
\end{align*}  
So we have $p_i^{-1}(n)\le 2$.
Hence, it follows from Lemma~\ref{lem:ui_not_n1 2} that we have $(n,b)\notin \hc{p_i}$ for all $1\le b\le n$ since $p_i^{-1}(n)\le 2$ implies that $(p_i^{-1}(n),p_i^{-1}(b))\notin \hc{}$.
Thus, $\hc{p_i}$ does not contain any lower diagonal full-string in this case as well.

\underline{Case (iii) :}
By definition, we have $p_i= w_0 s_1s_2\cdots s_i$.
In this case, we have
\begin{align*}
 p_i^{-1}(1) = 
  \begin{cases} 
   n \quad &(i<n-1), \\
   n-1 &(i=n-1).
 \end{cases}
\end{align*}  
Thus, the same argument as in Case (i) proves that $\hc{p_i}$ does not contain any lower diagonal full-string in this case as well.
\end{proof}

\vspace{10pt}

\begin{lemma}\label{lem: nonsingular codim 1 cell 2}
The permutation flags $(p_1)_{\bullet}$ and $(p_{n-1})_{\bullet}$ are nonsingular points of $\Hess(N,h)$.
\end{lemma}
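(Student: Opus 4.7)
The plan is to reduce this lemma to a small combinatorial check that complements the case analysis of Lemma~\ref{lem: nonsingular codim 1 cell}. Observe that the preceding lemma already handles permutations $p_i$ arising from cases (i), (ii), or (iii) in Definition~\ref{def:codim1}. Under the assumption \eqref{eq: basis assumption} and the convention $h(0)=1$, the index $i=1$ forces $h(i-1)=h(0)=1=i$, so $p_1$ falls under case (iii) or case (iv); similarly, since $h(n-1)=n=(n-1)+1$ always holds, the index $i=n-1$ forces $p_{n-1}$ to fall under case (ii) or case (iv). Thus the only scenarios not already covered are $p_1$ coming from case (iv) and $p_{n-1}$ coming from case (iv), and I would dispose of these two cases one at a time using Theorem~\ref{thm:fixed_point_singular}: it suffices to show that $\hc{p_1}$ and $\hc{p_{n-1}}$ contain no lower diagonal full-string.

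For $p_1$ in case (iv), the one-line notation is $p_1 = 1,n,n-1,\ldots,3,2$, so $p_1^{-1}(n) = 2$. A lower diagonal full-string of height $d-1$ necessarily contains the element $(n, n-d+1)$ in the bottom row, so it suffices to show that no element $(n,b)$ lies in $\hc{p_1}$. By Lemma~\ref{lem:ui_not_n1 2}, membership $(n,b)\in\hc{p_1}$ is equivalent to $p_1^{-1}(n) > h(p_1^{-1}(b))$, i.e.\ $2 > h(p_1^{-1}(b))$. Since $p_1^{-1}(b) \ge 1$, assumption \eqref{eq: basis assumption} gives $h(p_1^{-1}(b)) \ge 2$, which rules out this inequality; hence the bottom row of $\hc{p_1}$ is empty.

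For $p_{n-1}$ in case (iv), the one-line notation is $p_{n-1} = n-1,n-2,\ldots,2,1,n$, so $p_{n-1}^{-1}(1) = n-1$. A lower diagonal full-string of height $d-1$ necessarily contains the element $(d,1)$ in the first column, so it suffices to show that no element $(a,1)$ lies in $\hc{p_{n-1}}$. By Lemma~\ref{lem:ui_not_n1 2}, membership $(a,1)\in\hc{p_{n-1}}$ is equivalent to $p_{n-1}^{-1}(a) > h(p_{n-1}^{-1}(1)) = h(n-1) = n$, which is impossible since $p_{n-1}^{-1}(a) \in [n]$. Hence the first column of $\hc{p_{n-1}}$ is empty.

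There is no serious obstacle here: the whole argument is a combinatorial reduction, and the only insight needed is that in case (iv) the value $n$ (respectively $1$) is pinned to position~$2$ (respectively $n-1$) of the one-line notation, and this, combined with $h(i)\ge i+1$, empties the bottom row (respectively first column) of the Hessenberg complement. Once one recognizes that every lower diagonal full-string must meet both the bottom row and the first column, the lemma follows immediately from Theorem~\ref{thm:fixed_point_singular}.
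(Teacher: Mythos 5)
Your proof is correct and follows essentially the same route as the paper: both reduce to checking, via Theorem~\ref{thm:fixed_point_singular}, that $\hc{p_1}$ and $\hc{p_{n-1}}$ contain no lower diagonal full-string, splitting on whether the permutation arises from case (iv). The only difference is cosmetic: where you verify directly that the bottom row of $\hc{p_1}$ and the first column of $\hc{p_{n-1}}$ are empty, the paper observes that the case-(iv) permutations $p_1$ and $p_{n-1}$ coincide with the permutations that cases (ii) and (iii) would produce, and simply re-invokes Lemma~\ref{lem: nonsingular codim 1 cell}.
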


\begin{proof}
We first show that $(p_1)_{\bullet}$ is a nonsingular point in $\Hess(N,h)$.
We take cases.
If $h(1)=2$, then (by the convention $h(0)=1$) $p_1$ is defined as case (iv), and hence
  \begin{align*}
   p_1
   = 1 , n , n-1 , \cdots , 3 , 2 
  \end{align*}  
in one-line notation.
Notice that cases (ii) and (iv) define the same permutation $p_1$.
So we may think of this $p_1$ as defined as in case (ii) so that Lemma~\ref{lem: nonsingular codim 1 cell} implies that $(p_1)_{\bullet}$ is a nonsingular point of $\Hess(N,h)$.
If $h(1)>2$, then (by $h(0)=1$) $p_1$ is defined as in case (iii) so that $(p_1)_{\bullet}$ is a nonsingular point of $\Hess(N,h)$ by Lemma~\ref{lem: nonsingular codim 1 cell}. 

We next show that $(p_{n-1})_{\bullet}$ is a nonsingular point of $\Hess(N,h)$.
We take cases again.
If $h(n-2)=n-1$, then (by $h(n-1)=n$) $p_{n-1}$ is defined as in case (iv), and hence
  \begin{align*}
   p_{n-1}
   = n-1 , n-2 , \cdots , 1 , n
  \end{align*}  
in one-line notation.
Notice that cases (iii) and (iv) define the same permutation $p_{n-1}$.
So we may think of this $p_{n-1}$ as defined as in case (iii) so that Lemma~\ref{lem: nonsingular codim 1 cell} implies that $(p_{n-1})_{\bullet}$ is a nonsingular point of $\Hess(N,h)$.
If $h(n-2)>n-1$, then (by $h(n-1)=n$) $p_{n-1}$ is defined as in case (ii) so that it is a nonsingular point of $\Hess(N,h)$ by Lemma~\ref{lem: nonsingular codim 1 cell}.
\end{proof}

\vspace{12pt}

We are ready to classify regular nilpotent Hessenberg varieties which are normal algebraic varieties. 
Recall that a regular nilpotent Hessenberg variety $\Hess(N,h)$ is always irreducible (see Section~\ref{sec: background}) and that we are assuming the condition \eqref{eq: basis assumption}.

\begin{theorem} \label{thm:normal}
A regular nilpotent Hessenberg variety $\Hess(N,h)$ is normal if and only if we have $h(i-1)> i$ or $h(i)> i+1$ for all $1 < i < n-1$.
\end{theorem}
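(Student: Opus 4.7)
The plan is to invoke Serre's normality criterion on top of the Cohen--Macaulay structure of $\Hess(N,h)$ and then reduce the problem to analyzing the codimension-$1$ Hessenberg--Schubert cells. Since $\Hess(N,h)$ is a local complete intersection by \cite[Corollary~3.17]{ADGH18}, it is Cohen--Macaulay, and so is normal if and only if its singular locus has codimension at least $2$. Because the decomposition in \eqref{eq: def of HS cell} is an affine paving, this last condition is equivalent to the assertion that no codimension-$1$ cell lies entirely in the singular locus. By Proposition~\ref{prop:codim1}, the codimension-$1$ cells are precisely $\Hess(N,h)\cap \SC{p_i}$ for $1\le i\le n-1$, so the theorem will follow once I determine exactly when some $\Hess(N,h)\cap \SC{p_i}$ is entirely singular.

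For the ``if'' direction, I assume $h(i-1)>i$ or $h(i)>i+1$ for every $1<i<n-1$. Then for each such $i$, the permutation $p_i$ falls under case (i), (ii), or (iii) of Definition~\ref{def:codim1}, so Lemma~\ref{lem: nonsingular codim 1 cell} applies to give that $(p_i)_\bullet$ is a nonsingular point of $\Hess(N,h)$; the endpoints $i=1$ and $i=n-1$ are covered by Lemma~\ref{lem: nonsingular codim 1 cell 2}. Applying Proposition~\ref{prop: sufficient condition on cell}(1) promotes this to the entire cell: every point of $\Hess(N,h)\cap \SC{p_i}$ is nonsingular. Since all remaining cells have codimension $\ge 2$, the singular locus has codimension $\ge 2$, and normality follows.

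For the ``only if'' direction, I suppose some $1<i<n-1$ satisfies $h(i-1)=i$ and $h(i)=i+1$, so $p_i$ is defined via case (iv). From the one-line notation $p_i=i,i-1,\ldots,1,n,n-1,\ldots,i+1$ I read off $p_i^{-1}(1)=i$, $p_i^{-1}(2)=i-1$, $p_i^{-1}(n)=i+1$, and $p_i^{-1}(n-1)=i+2$. A direct check using \eqref{eq:r_h} shows that the two elements $(n-1,1)$ and $(n,2)$ both lie in $\hc{p_i}$: indeed $p_i^{-1}(n-1)=i+2>i+1=h(i)=h(p_i^{-1}(1))$ and $p_i^{-1}(n)=i+1>i=h(i-1)=h(p_i^{-1}(2))$. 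These two elements form a lower diagonal full-string of height $n-2$, so Theorem~\ref{thm:fixed_point_singular} tells us $(p_i)_\bullet$ is a singular point; moreover the hypothesis of height $n-1$ or $n-2$ in Proposition~\ref{prop: sufficient condition on cell}(2) is precisely satisfied, forcing the entire cell $\Hess(N,h)\cap \SC{p_i}$ to consist of singular points. Hence the singular locus contains a codimension-$1$ subvariety and $\Hess(N,h)$ fails to be normal.

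The main obstacle is really the last verification: producing the length-$2$ full-string in case (iv) and recognizing that its height $n-2$ lies within the range covered by Proposition~\ref{prop: sufficient condition on cell}(2). This is the only place where the specific structure of $p_i$ in case (iv) intervenes, and the two strict inequalities above are tight by exactly one unit --- which is precisely the tightness that necessitated phrasing Proposition~\ref{prop: sufficient condition on cell}(2) with the height-$n-1$-or-$n-2$ hypothesis in the preceding section. Everything else is bookkeeping between the four cases in Definition~\ref{def:codim1} and mechanical application of the two halves of Proposition~\ref{prop: sufficient condition on cell}.
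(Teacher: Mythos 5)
Your proposal is correct and follows essentially the same route as the paper: local complete intersection implies Cohen--Macaulay, so normality reduces to the codimension of the singular locus; the codimension-one cells are handled via Proposition~\ref{prop:codim1}, Lemmas~\ref{lem: nonsingular codim 1 cell} and \ref{lem: nonsingular codim 1 cell 2}, and the two parts of Proposition~\ref{prop: sufficient condition on cell}, with the same verification that $(n-1,1),(n,2)\in \hc{p_i}$ in case (iv). The only cosmetic difference is that you make explicit the equivalence (via the affine paving) between ``singular locus has codimension $\ge 2$'' and ``no codimension-one cell is entirely singular,'' which the paper leaves implicit.
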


\begin{proof}
To begin with, we note that $\Hess(N,h)$ a local complete intersection (\cite[Corollary~3.17]{ADGH18}).
Thus, $\Hess(N,h)$ is normal if and only if the codimension of its singular locus is greater than or equal to $2$ (\cite[Chapter~II,~Proposition~8.23]{Har77}).

Assume that if $h(i-1)> i$ or $h(i)> i+1$ for all $1 < i < n-1$.
We first show that $(p_1)_{\bullet},\ldots,(p_{n-1})_{\bullet}$ are nonsingular points of $\Hess(N,h)$.
Since we know that $p_1$ and $p_{n-1}$ are nonsingular points by Lemma~\ref{lem: nonsingular codim 1 cell 2}, 
it suffices to consider $p_i$ for $1< i< n-1$.
Then we have $h(i-1)>i$ or $h(i)>i+1$ by the assumption. This means that $p_i$ is defined as in case (i), (ii) or (iii). Hence, by Lemma~\ref{lem: nonsingular codim 1 cell}, it follows that $(p_i)_{\bullet}$ is a nonsingular point.
Hence, we proved that $(p_1)_{\bullet},\ldots,(p_{n-1})_{\bullet}$ are nonsingular points of $\Hess(N,h)$ as claimed above.

Now, it follows from Proposition~\ref{prop: sufficient condition on cell}~(1) that every point of the codimension 1 cell $\Hess(N,h)\cap \SC{p_i}$ is a nonsingular point of $\Hess(N,h)$ for $1\le i\le n-1$. Therefore, by Proposition~\ref{prop:codim1}, the codimension of the singular locus of $\Hess(N,h)$ is greater than or equal to $2$ which implies that $\Hess(N,h)$ is normal.

Next suppose that there exists $1 < i < n-1$ such that $h(i-1)=i$ and $h(i)=i+1$, and we show that $\Hess(N,h)$ is not normal.
In this case, the permutation $p_i$ is defined as in case (iv), that is, $p_i=w_{P_i}$. For our purpose, we prove that the Hessenberg complement $\hc{p_i}$ contains a lower diagonal full-string of height $n-2$:
\begin{align}\label{eq: proof normal 10}
 \{(n-1,1),(n,2)\} \subseteq \hc{p_i}.
\end{align}  
We prove this as follows.
Since $1 < i <n-1$, we see directly from the one-line notation of $p_i=w_{P_i}$ that $(p_i)^{-1}(n-1)= i+2$ and $(p_i)^{-1}(1)= i$. 
Hence the condition $h(i)=i+1<i+2$ and Lemma~\ref{lem:ui_not_n1 2} imply that $(n-1,1)\in \hc{p_i}$.
Similarly, we have $(p_i)^{-1}(n)= i+1$ and $(p_i)^{-1}(2)= i-1$, and hence the condition $h(i-1)=i<i+1$ implies that $(n,2)\in \hc{p_i}$. Thus we obtain \eqref{eq: proof normal 10}.
Since $\{(n-1,1),(n,2)\}$ is a lower diagonal full-string of height $n-2$, we may apply Proposition~\ref{prop: sufficient condition on cell} to see that every point of the Hessenberg-Schubert cell $\SC{p_i} \cap \Hess(N,h)$ is a singular point of $\Hess(N,h)$.
By Proposition~\ref{prop:codim1}, this cell $\SC{p_i}\cap \Hess(N,h)$ is codimension 1 in $\Hess(N,h)$.
Therefore $\Hess(N,h)$ is not normal as its singular locus is codimension $1$. 
\end{proof}

\vspace{12 pt}

We remark that Theorem~\ref{thm:normal} can be restated equivalently as  $\Hess(N,h)$ is not normal if and only if there exists some $1 <i < n-1$ with $h(i-1) = i$ and $h(i) = i+1$.
In particular, one recovers Kostant's result; Peterson variety (i.e.\ $\Hess(N,h)$ with $h(i)=i+1$ for $1\le i<n$) is not normal except for $n\le 3$ (\cite{Kos96}). 
Colloquially, 
Theorem~\ref{thm:normal} says that $\Hess(N,h)$ is not normal if there is some index $1< i < n-1 $ where the Hessenberg function looks like that of the Peterson variety.

\begin{example}
When $n=5$, the regular nilpotent Hessenberg varieties (with the assumption \eqref{eq: basis assumption}) which are not normal
are those corresponding to the following Hessenberg functions:
\begin{itemize}
    \item $h=(\textcolor{red}{\textbf{2}},\textcolor{red}{\textbf{3}},\textcolor{red}{\textbf{4}},5,5)$; $p_2=21543$ and $p_3=32154 $,
    \item $h=(3,\textcolor{red}{\textbf{3}},\textcolor{red}{\textbf{4}},5,5)$; $p_3=32154 $,
    \item $h=(\textcolor{red}{\textbf{2}},\textcolor{red}{\textbf{3}},5,5,5)$;  $p_2=21543 $,
\end{itemize}
where we highlight in red the pairs of indexes that offend the normality condition of Theorem~\ref{thm:normal}.
After each Hessenberg function, we list type (iv) permutations $p_i$ from Definition~\ref{def:codim1} corresponding to a singular cell of codimension 1 in $\Hess(N,h)$.  
\end{example}

We end this section with a proposition showing that every singular Hessenberg-Schubert cell in the Peterson variety corresponds to a permutation satisfying condition (2) of Proposition~\ref{prop: sufficient condition on cell}.  

\begin{proposition} \label{proposition:Peterson Strings} 
Suppose that $\Hess(N,h)$ is a Peterson variety $($i.e.\ $h(i)=i+1$ for $1\le i<n$$)$, and let $w_\bullet \in \Hess(N,h)$ be a permutation flag.
If $H_w^c$ contains a lower diagonal full-string, then it contains a lower diagonal full-string of height $n-1$ or $n-2$.
\end{proposition}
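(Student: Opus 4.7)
The plan is to first make explicit the structure of Peterson permutation flags. By Lemma~\ref{lem: permutation flags in Hess 2} specialized to $h(i)=i+1$, the condition $w_{\bullet}\in\Hess(N,h)$ is equivalent to $\sigma(i)\le \sigma(i+1)+1$ for all $1\le i<n$, where $\sigma\coloneqq w^{-1}$. Combined with $\sigma$ being a bijection of $[n]$, this inequality forces $\sigma$ to decompose uniquely into a concatenation of maximal runs each of the form $(m,m-1,\ldots,m-k+1)$. Since the first value of a new run cannot continue the previous one (that would contradict maximality), the value-intervals of consecutive runs must be disjoint and strictly increasing. Hence Peterson permutations are in bijection with compositions $(k_1,\ldots,k_r)$ of $n$: writing $M_i\coloneqq k_1+\cdots+k_i$ and $M_0\coloneqq 0$,
\[
 \sigma(j)=M_i+M_{i-1}+1-j \qquad \text{for } M_{i-1}<j\le M_i .
\]

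Next, I assume $\hc{w}$ contains a lower diagonal full-string of height $d-1$; it suffices to treat $d\le n-2$, since $d\in\{n-1,n\}$ is already the desired conclusion. The string condition asserts $\sigma(d+j)\ge \sigma(j+1)+2$ for all $j=0,\ldots,n-d$. The plan here is to show that every run $B_i$ satisfies $k_i\le d-1$. Indeed, if some $k_i\ge d$, setting $j=M_{i-1}$ places both $j+1$ and $d+j$ inside the single run $B_i$, with the constraint $j\in[0,n-d]$ satisfied because $n-M_{i-1}\ge k_i\ge d$; within a run $\sigma$ decreases by exactly $1$ per step, so $\sigma(d+j)-\sigma(j+1)=-(d-1)<2$, contradicting the string condition.

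The bound $k_i\le d-1\le n-3$ combined with $\sum_i k_i=n$ forces $r\ge 2$, and I plan to conclude by case analysis on $r$. If $r\ge 3$, a direct calculation gives $\sigma(n)-\sigma(1)=(M_{r-1}+1)-k_1\ge k_2+1\ge 2$, so $(n,1)\in\hc{w}$ and $\hc{w}$ contains the height-$(n-1)$ string. If $r=2$, the inequalities $k_1,k_2\le n-3$ and $k_1+k_2=n$ force $k_1,k_2\ge 3$, putting $2\in B_1$ and $n-1\in B_2$; applying the explicit formula for $\sigma$ then yields $\sigma(n-1)=k_1+2=\sigma(1)+2$ and $\sigma(n)=k_1+1=\sigma(2)+2$, so both $(n-1,1)$ and $(n,2)$ lie in $\hc{w}$, giving the height-$(n-2)$ string. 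The main conceptual obstacle is identifying the composition/block structure of Peterson permutations from the Peterson inequality; once that structure is in hand, the remaining deductions are short algebraic verifications.
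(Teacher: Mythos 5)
Your proof is correct, and it reaches the same endgame as the paper's by a noticeably more self-contained route. The paper cites \cite[Lemma~9]{IY12} for the fact that Peterson permutation flags are longest elements of parabolic subgroups, and then organizes the argument by codimension of the Hessenberg--Schubert cell: codimension~$0$ is trivial, codimension~$1$ is handled by importing the classification of Proposition~\ref{prop:codim1} together with Lemmas~\ref{lem: nonsingular codim 1 cell} and \ref{lem: nonsingular codim 1 cell 2} and the height-$(n-2)$ computation from the proof of Theorem~\ref{thm:normal}, and codimension~$\ge 2$ is treated by reading off $w^{-1}(1)$ and $w^{-1}(n)$ from the block-anti-diagonal form to get $(n,1)\in\hc{w}$. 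You instead (a) rederive the block/composition structure directly from Lemma~\ref{lem: permutation flags in Hess 2} (the Peterson inequality $\sigma(i)\le\sigma(i+1)+1$ forces ascents or descents by exactly one, hence maximal decreasing runs whose value-intervals are disjoint and increasing), and (b) replace the entire codimension-$1$ machinery with a single clean observation: a full string of height $d-1\le n-3$ forces every block to have size at most $d-1$, since a block of size $\ge d$ would contain both endpoints of some string entry and violate the inequality $\sigma(d+j)\ge\sigma(j+1)+2$. After that, your dichotomy on the number of blocks ($r\ge3$ gives $\sigma(n)-\sigma(1)\ge2$, hence $(n,1)\in\hc{w}$; $r=2$ forces $k_1,k_2\ge3$ and gives $\{(n-1,1),(n,2)\}\subseteq\hc{w}$) matches the paper's codimension-$\ge2$ versus codimension-$1$ split exactly, with the same two target strings. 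What your approach buys is independence from \cite{CHL21} and \cite{IY12} and from Section~5's nonsingularity lemmas; what the paper's buys is brevity given that machinery is already in place. I verified the arithmetic in both cases (including that $h(n)=n$ causes no issue since $\sigma(1),\sigma(2)<n$ in the relevant cases), and the argument is sound.
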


\begin{proof}
We know that the permutation flags in the Peterson variety correspond to the longest words in the parabolic subgroups of $\mathfrak{S}_n$ (e.g.\ \cite[Lemma~9]{IY12}). 
There is no lower-diagonal full string in $\hc{w_0}$ so the claim is trivial for the longest permutation $w_0$ (the case of codim 0). 
Since we have $h(i)=i+1$ for $1\le i<n$, the permutations $p_i=w_{P_i}$ are all defined as in case (iv).
By Lemma~\ref{lem: nonsingular codim 1 cell 2}, the permutation flags for $p_1$ and $p_{n-1}$ are nonsingular points of $\Hess(N,h)$, and hence $\hc{p_1}$ and $\hc{p_{n-1}}$ do not contain lower diagonal full-strings by Theorem~\ref{thm:fixed_point_singular}.
For $1<i<n-1$, the argument of the proof of Theorem~\ref{thm:normal} shows that the Hessenberg complement $\hc{p_i}$ for $1<i<n-1$ contains a lower diagonal full-string of height $n-2$.

So far we have proven the claim for each permutation whose Hessenberg-Schubert cell is of codimension $0$ and $1$.
The remaining permutations to check are those below some of the permutations $p_1,\ldots,p_{n-1}$ in Bruhat order. 

Suppose that  $w_\bullet \in \Hess(N,h)$ and $\Hess(N,h)\cap \SC{w}$ has codimension 2.
Then $w \leq p_i$ and $w \leq p_j$ for some $1 \leq i < j \leq  n -1$ and $w$ is the longest word in the parabolic subgroup 
 generated by $s_1,s_2,\ldots, s_{i-1}, s_{i+1}, s_{i+2},\ldots, s_{j-1},s_{j+1},\ldots, s_{n-1}$
and its one-line notation has the form
\begin{align*}
 w = i, i-1,\ldots, 1, j, j-1,\ldots, i+1, n , n-1, \ldots, j+1.
\end{align*}  
In other words, its permutation matrix is block diagonal and each block has ones along the reverse diagonal \cite[Section~2.1]{AHKZ21}.
Then $w^{-1}(n) = j+1 $ and $w^{-1}(1) =i $.  
Since $i < j$, we have $h(i)<j+1$.
Hence it follows that $(n,1)\in \hc{w}$ by Lemma~\ref{lem:ui_not_n1 2}.
This is a lower diagonal full string of height $n-1$.

If $w$ lies below more than two of the permutations $p_1,\ldots,p_{n-1}$ in Bruhat order, then the argument is essentially the same. 
Let $1 \leq i < j \leq  n -1$ be such $i$ is the smallest index with $w \leq p_i$ and let $j$ be the largest index such that $w \leq p_j$.
Then $w^{-1}(n) = j+1$ and $w^{-1}(1) =i $.  
Since $i < j$, we have $h(i)<j+1$, and hence it follows that $(n,1)\in \hc{w}$ by Lemma~\ref{lem:ui_not_n1 2}.
This is a lower diagonal full string of height $n-1$. 
\end{proof}


\section{Future Directions and Open Problems}
We end this manuscript with a list of interesting open problems.
It is known that the singular locus of a Peterson variety is a union of Peterson-Schubert varieties. More precisely, when $h(i)=i+1$ for $1\le i<n$, $\Hess(N,h)$ is the Peterson variety, and \cite[Theorem 4]{IY12} implies that its singular locus is the union of Peterson-Schubert varieties $\overline{\Hess(N,h)\cap \SC{p_i}}$ for $1<i<n-1$.  
Recently, Escobar-Precup-Shareshian proved that the singular locus of nilpotent codimension one Hessenberg varieties are in fact nilpotent Hessenberg varieties of different Hessenberg functions (\cite{EPS22}); they are of course Hessenberg-Schubert varieties for the latter Hessenberg functions. These results motivate the following question.
 
\begin{openproblem}
When is the singular locus of a regular nilpotent Hessenberg variety isomorphic to a union of Hessenberg-Schubert varieties?
\end{openproblem}

We also suspect that the results in this paper may generalize naturally to wider classes of Hessenberg varieties. 

\begin{openproblem}
 Generalize Theorem~$\ref{thm:fixed_point_singular}$ and Theorem~$\ref{thm:normal}$ to other Lie types or to $($possibly non-nilpotent$)$ regular Hessenberg varieties.
\end{openproblem}


\section{Acknowledgments} 
We thank Megumi Harada, Martha Precup, Jenna Rajchgot, and Nicholas Seguin for stimulating conversations and helpful comments.
In particular, we thank Jenna Rajchgot for suggesting that we apply our singularity criteria to classify normal regular nilpotent Hessenberg varieties.
We are also grateful to Patrick Brosnan, Megumi Harada, John Shareshian, Michelle Wachs for organizing the workshop \textit{Hessenberg Varieties in Combinatorics, Geometry and Representation Theory} in 2018 which made our collaboration possible, as well as Banff International Research Station for their hospitality.
This research is supported in part by Osaka City University Advanced Mathematical Institute (MEXT Joint Usage/Research Center on Mathematics and Theoretical Physics): the topology and combinatorics of Hessenberg varieties.
The first author is supported in part by JSPS Grant-in-Aid for Early-Career Scientists: 18K13413.


\end{document}